\numberwithin{equation}{section}
\newcommand{\be}{\begin{eqnarray}}
\newcommand{\ee}{\end{eqnarray}}
\newcommand{\ce}{\begin{eqnarray*}}
\newcommand{\de}{\end{eqnarray*}}
\newtheorem{theorem}{Theorem}[section]
\newtheorem{lemma}[theorem]{Lemma}
\newtheorem{remark}[theorem]{Remark}
\newtheorem{definition}[theorem]{Definition}
\newtheorem{proposition}[theorem]{Proposition}
\newtheorem{Examples}[theorem]{Example}
\newtheorem{corollary}[theorem]{Corollary}
\newtheorem{assumption}{Assumption}[section]
\def\d{\delta}
\def\[{{\Big[}}
\def\]{{\Big]}}
\def\<{{\langle}}
\def\>{{\rangle}}
\def\({{\Big(}}
\def\){{\Big)}}
\def\bx{{\mathbf{x}}}
\def\min{{\mathord{{\rm min}}}}
\def\={&\!\!=\!\!&}
\def\bt{\begin{theorem}}
\def\et{\end{theorem}}
\def\bl{\begin{lemma}}
\def\el{\end{lemma}}
\def\br{\begin{remark}}
\def\er{\end{remark}}
\def\bas{\begin{assumption}}
\def\eas{\end{assumption}}
\def\bd{\begin{definition}}
\def\ed{\end{definition}}
\def\bp{\begin{proposition}}
\def\ep{\end{proposition}}
\def\bc{\begin{corollary}}
\def\ec{\end{corollary}}
\def\bx{\begin{Examples}}
\def\ex{\end{Examples}}
\def\EE{{\mathbb E}}
\def\R{{\mathbb R}}
\def\d{\text{\rm{d}}}
\def\geq{\geqslant}
\def\leq{\leqslant}
\title[Stochastic logarithmic Schr\"odinger equations driven by L\'evy noise]{\bf{Stochastic logarithmic Schr\"odinger equations driven by L\'evy noise}$^\dagger$}
  \thanks{$\dagger$  This work is partially supported by National Key R\&D Program of China(Nos.  2024YFA1012301, 2022YFA1006001), National Natural Science Foundation of China (Nos.12071433, 12131019, 11971456, 11721101), and the Fundamental Research Funds for the Central Universities (No. WK3470000031).}
\date{\today}
\begin{document}

\maketitle

\centerline{\scshape    Jiahui Zhu$^a$ Jianliang Zhai$^{b}$ }
\medskip
 {\footnotesize
 \centerline{ $a.$    School of Mathematical Sciences, Zhejiang University of Technology, Hangzhou 310019, China }
  \centerline{$b.$ School of Mathematical Sciences, University of Science and Technology of China, Hefei 230026, China}

 }

\begin{abstract}
In this paper, we study the stochastic logarithmic Schr\"odinger equation with saturated nonlinear multiplicative L\'evy noise. The global well-posedness is established for the stochastic logarithmic Schr\"odinger equation in an appropriate Orlicz space by construct  solutions of a regularized equation converging strongly to a solution to the original equation.

\textit{Keywords}: Nonlinear logarithmic Schr\"odinger equation,  multiplicative L\'evy noise,  Marcus canonical  form

\textit{Mathematics Subject Classification}: 35J10, 60G51,  60H15, 35A01, 35A02
\end{abstract}

\section{Introduction}

The deterministic logarithmic Schr\"odinger equation
\begin{align}\label{det-LSE}
i \partial_t u(t,x)+\frac{1}{2} \Delta u(t,x)+\lambda \log (|u(t,x)|^2) u(t,x)=0, \quad u(0,x)=u_0,\quad (t,x)\in\R\times \R^d,
\end{align}
 was initially proposed in  \cite{BM-76} as a model of quantum mechanics, notable for being the only type to satisfy the tensorization property. The logarithmic nonlinearity was subsequently acknowledged as suitable  for describing various physical phenomena, including nuclear physics \cite{Hefter-85}, quantum optics \cite{KEB-2000},  transport and diffusion phenomena \cite{MFCL-03}, open quantum systems \cite{Yasue}, effective quantum gravity \cite{Zlo-10}, and Bose–Einstein condensation \cite{AZ-11}.

The logarithmic Schr\"odinger equation \eqref{det-LSE} demonstrates rather unique dynamical properties. These distinct behaviors are primarily driven by the singularity of the logarithm at the origin. Observe that regardless of the sign of $\lambda$, the energy 
\begin{align}\label{Intro-energy-eq1}
E(u(t)):=\frac{1}{2}\|\nabla u(t)\|_{L^2\left(\mathbb{R}^d\right)}^2-\frac{\lambda }{2}\int_{\mathbb{R}^d}|u(t, x)|^2 \log |u(t, x)|^2 d x
\end{align}
has no definite sign. When $\lambda>0$, no solutions exhibit dispersive structure. Whereas for  $\lambda<0$, solutions do disperse and  the standard dispersion rate in $t^{-d / 2}$, is enhanced by a logarithmic factor in $(t \sqrt{\ln t})^{-d / 2}$, resulting in an accelerated dispersion (see \cite{CG-Duke-18}). Therefore, we classify the case $\lambda<0$ as defocusing.  Another feather of \eqref{det-LSE} is that  the modulus of the solution converges for large time to a universal Gaussian profile.

 The logarithmic nonlinearity, rather than being a weak nonlinearity, dominates the dynamics even in the presence of an energy-subcritical defocusing power  nonlinearity.   In other words, the effects of this  logarithmic nonlinear term are significantly stronger than those of a defocusing power-like nonlinearity.  All the positive Sobolev norms of solution of the logarithmic equation grow logarithmically in time, even when an  energy-subcritical defocusing power-like nonlinearity is added, as shown in \cite{CG-Duke-18}.
 
 Driven by the aforementioned physical significance, our primary focus here is  the well-posedness of the following stochastic nonlinear logarithmic Schr\"odinger equation on $\mathbb{R}^d$
\begin{align}\label{SLogSE}
&  \d u(t)=\mathrm{i}[\Delta u(t)+\lambda u(t) \log |u(t)|^2]\d t-\mathrm{i}   \sum_{j=1}^m g_j(u(t-)) \diamond \mathrm{d} L_j(t)=0,\quad t>0,\\
&u(0)=u_0,\nonumber
\end{align}
where $\lambda \in \mathbb{R} \backslash\{0\}$ and $L(t)=\left(L_1(t), \cdots, L_m(t)\right), t \geq 0$ is an $\mathbb{R}^m$-valued pure jump L\'evy process with L\'evy measure $\nu$, i.e. $L(t)=\int_0^t \int_B z \tilde{N}(\mathrm{d} s, \mathrm{d} z)$, where $B=\left\{z \in \mathbb{R}^m: 0<\right.$ $|z|_{\mathbb{R}^m} \leq 1\}$ and $N$ is a time homogeneous Poisson random measure on $\mathbb{R}^{+} \times(\mathbb{R}^m\backslash\{0\})$  with $\sigma$-finite intensity measure $\nu$ satisfying $\int_B|z|_{\mathbb{R}^m}^2 \nu(\mathrm{d} z)<\infty$  on a filtered probability space $(\Omega,\mathcal{F},\mathbb{F}:=(\mathcal{F}
_t)_{t\geq 0},\mathbb{P})$ with the usual conditions. 

The nonlinearity $g_j$ is of saturated type $g_j(y)=\tilde{g}_j(|y|^2) y$ with $\tilde{g}_j \in C_b^2([0, \infty) ; \mathbb{R})$. 
 In optics, saturated nonlinearities are often used to describe propagation of intense beams through saturable media. Under Assumption \ref{assum-main}, $\tilde{g}$ could be  the following typical functions:  saturation of the intensity nonlinearity $\tilde{g}_j(\theta)=\frac{\theta}{1+\rho \theta}$, with $\rho>0$, which characterizes photorefractive media \cite{Kelley} and square-root nonlinearity $\tilde{g}_j(\theta)=1-\frac1{\sqrt{1+\theta}}$, which describe narrow-gap semiconductors \cite{SBB}.  We can also choose $ \tilde{g}_j(\theta)=\frac{\theta(2+\rho \theta)}{(1+\rho \theta)^2}$ or  $\tilde{g}_j(\theta)=\frac{\log (1+\rho \theta)}{1+\log (1+\rho \theta)}$,  $\theta\geq 0$, with $\rho>0$.

In view of the expression \eqref{Intro-energy-eq1}, the natural energy space is defined by
\begin{align*}
W:=\big\{u \in H^1(\mathbb{R}^d):|u|^2 \log  |u|^2 \in L^1(\mathbb{R}^d)\big\},
\end{align*}
which can be represented as a Banach space using Orlicz spaces. 

In the deterministic case, the Cauchy problem for equation \eqref{det-LSE} in the energy space $W$ was initially studied in \cite{Caz-80}, where it was demonstrated that if $\lambda>0$ and $u_0 \in W$, then there exists a unique solution $u \in C(\mathbb{R}, W)$ to \eqref{det-LSE}. The proof in \eqref{det-LSE} is based on compactness arguments and  the regularization of the nonlinearity, which seems to be restricted to $\lambda>0$. An alternative proof presented in \cite{HO-24}, adopted a more constructive approach, demonstrating the strong convergence of a sequence of approximate solutions while avoiding reliance on compactness arguments.

Recently, in \cite{CG-Duke-18}, weighted Sobolev spaces $\mathcal{F}(H^{\alpha})$ in $L^2$ defined by
$$\mathcal{F}(H^\alpha):=\{u \in L^2(\mathbb{R}^d), x \mapsto\langle x\rangle^\alpha u(x) \in L^2(\mathbb{R}^d)\},\quad \langle x\rangle:=\sqrt{1+|x|^2}$$ 
with the norm $\|u\|_{\mathcal{F}(H^{\alpha})}:=\| \langle x\rangle^\alpha u(x)\|_{L^2}$ for some $\alpha \in(0,1]$, are considered in order to control the nonlinearity in the region $\{|u|<1\}$. The existence result in weighted Sobolev spaces was proved in \cite{CG-Duke-18} that if $u_0 \in H^m(\mathbb{R}^d) \cap \mathcal{F}(H^\alpha)$ with $m=1,2$ and $\alpha \in(0,1]$, then there exists a unique solution $u \in L_{\text {loc }}^{\infty}(\mathbb{R}, H^m(\mathbb{R}^d) \cap\mathcal{F}(H^\alpha))$. 
It is important to note that weighted Sobolev spaces $\mathcal{F}(H^\alpha)$ is strictly narrower than the energy space $W$:
\begin{align}\label{inc-rel-H1-W}
H^1(\mathbb{R}^d) \cap \mathcal{F}(H^\alpha) \subsetneq W \subsetneq H^1(\mathbb{R}^d),
\end{align}
for all $\alpha\in(0,1]$, see \cite{HO-24} for more discussions.

In the stochastic case, by using the rescaling  transformation and maximal monotone operators approach, Barbu, R\"ockner and Zhang \cite{BRZ17} proved global existence and uniqueness of solutions of the stochastic logarithmic Schr\"odinger equation with linear multiplicative Gaussian noise in the energy space $W$. However, the approach appears to be effective only for the case of linear multiplicative noise. Recently, Cui and Sun in \cite{CS-23} studied the stochastic logarithmic Schr\"odinger equation with either additive noise or nonlinear multiplicative Gaussian noise and established the global well-posedness of the solution in the weighted Sobolev space $ H^1(\mathbb{R}^d) \cap \mathcal{F}(H^\alpha)$ by using a regularization of the nonlinearity. 
In particular, the space $ H^1(\mathbb{R}^d) \cap \mathcal{F}(H^\alpha)$ in \cite{CS-23} allows the authors to effectively control the nonlinear terms as
\begin{align*}
  \| u\log|u|^2 \|_{L^2(\R^d)}\lesssim_{\delta} \|u\|_{L^{2-2\delta}(\R^d)}^{1-\delta}+\|u\|_{L^{2+2\delta}(\R^d)}^{1+\delta}\lesssim \| u\|^{1-\delta-\frac{d\delta}{2\alpha}}_{L^2(\R^d)}\| u\|_{\mathcal{F}(H^{\alpha})}^{\frac{d\delta}{2\alpha}}+\|u\|_{L^2(\R^d)}^{1+\delta-\frac{d\delta }{2}}\|\nabla u\|_{L^2(\R^d)}^{\frac{d\delta }{2}},
\end{align*}
for $\alpha>\frac{d\delta}{2-2\delta}$, $u\in H^1(\R^d)\cap \mathcal{F}(H^{\alpha})$.

Compared to Gaussian cases \cite{BRZ17,CS-23},  stochastic logarithmic Schr\"odinger equations driven by jump noise remain unexplored. The unique challenges posed by L\'evy noise arise from its discontinuous nature, which distinguishes the analysis from that of  stochastic nonlinear Schr\"odinger equations driven by Gaussian noise. In  Gaussian cases, Stratonovich integrals are typically employed, but they prove inadequate for handling jump noise. To address this, the Marcus canonical integral is crucial, as it is specifically designed to accommodate discontinuities while maintaining invariance under changes of coordinates, a property necessary for conserving norms in stochastic nonlinear Schr\"odinger equations. However, the presence of an additional discontinuous component, associated with Marcus-type noise, introduces a novel layer of complexity, making the analysis of our problem \eqref{SLogSE} particularly challenging.

Recognizing the inclusion relation \eqref{inc-rel-H1-W} in functions spaces, in this paper, our aim is to establish the global existence and uniqueness of solutions to equation \eqref{SLogSE} in the general energy space $W$,  comparable to the results found in \cite{CS-23}. 
The well-posedness of the Cauchy problem for \eqref{SLogSE} is more intricate than it may seem at first glance, with the primary difficulty stemming from the lack of locally Lipschitz continuity of the nonlinear term due to the singularity of the logarithm at the origin. The standard approach, which relies on deterministic and stochastic Strichartz estimates, and a fixed point argument on Duhamel’s formula, as used for the power nonlinearities employed in \cite{BLZ}, is impractical.  But luckily, the logarithmic nonlinearity satisfies the following inequality:
\begin{align}\label{quasi-mon-intro}
|\operatorname{Im}[(u \log |u|-v \log |v|)(\bar{u}-\bar{v})]| \leq|u-v|^2 \quad \text { for all } u, v \in \mathbb{C}.
\end{align}
To avoid the blowup of the nonlinear term at the origin, we will adopt a regularization approach for the logarithmic nonlinearity and introduce the regularized logarithmic Schr\"odinger equation 
\begin{align}\label{intro-app-SlogSE}
&  \d u(t)=\mathrm{i}\Big[ \Delta u(t)+2 \lambda u(t) \log \Big(\frac{|u(t)|+\varepsilon}{1+\varepsilon|u(t)|}\Big)\Big]\d t-\mathrm{i}   \sum_{j=1}^n g_j(u(t-)) \diamond \mathrm{d} L_j(t), \;t>0, \\
&u(0)=u_0,\nonumber
\end{align}
where $0<\varepsilon< 1$. One advantage of this approximation is that it works whichever the sign of $\lambda$ is, while the approach introduced initially in \cite{Caz-80} seems to be bound to the focusing case $\lambda>0$.
Another advantage of our choice of regulation function $\log  \Big(\frac{|u|+\epsilon}{1+\epsilon|u|}\Big)$, different from \cite{CG-Duke-18} and \cite{HO-24}, is that it is bounded and enables us to obtain similar inequality for the regularized logarithmic nonlinearity. 

Unlike \cite{BRZ17}, which relies on compactness arguments, we will construct solutions by demonstrating that the approximate solutions of the regularized equation \eqref{intro-app-SlogSE} form a Cauchy sequence in the Banach space $L^2(\Omega ; L^{\infty}([0, T] ; L^2_{\text{loc}}(\R^d)))$. This approach provides strong convergence directly, without resorting to weak convergence or taking subsequences. Notably, the Marcus form of the noise does not disrupt the cancellation effects, which leads to mass conservation similar to the deterministic case. We establish a uniform $H^1$-estimate (independence of $\varepsilon$) of the approximate equations resulting to the global well-posedness in $H^1(\R^d)$ of \eqref{SLogSE}. To control the singularity of the logarithmic nonlinearity at the origin, we analyze the entropy function and establish the uniform estimates in the Orlicz space which subsequently yield uniform estimates in the energy space $W$.

We now state our main result.

\begin{theorem}\label{Them-main}Suppose that Assumption \ref{assum-main}  holds.  Let $\lambda \in \mathbb{R} \backslash\{0\}$ and $p\geq 2$. Let $0<T<\infty$. 
If $u_0\in L^p(\Omega,\mathcal{F}_0;W)$, then there exists a unique global solution $u\in L^p(\Omega;L^{\infty}(0,T;W))$ to equation \eqref{SLogSE}  in the sense of Definition \ref{defi-solution-W}. 
Moreover, there exists a constant $C(u_0,\lambda,m,T,p,K_{\tilde{g}})>0$ such that
\begin{align*}
\mathbb{E} \sup_{t\in[0,T]}\|u(t)\|_{W}^p\leq C(u_0,\lambda,m,T,p,K_{\tilde{g}}),
\end{align*}
and the mass conservation holds $\mathbb{P}$-a.s. for all $t\in[0,T]$,
\begin{align}\label{Th-mass-conv-1}
\|u(t)\|_{L^2(\mathbb{R}^d)}=\|u_0\|_{L^2(\mathbb{R}^d)}.
\end{align}
\end{theorem}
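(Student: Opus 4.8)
The plan is to realize the solution of \eqref{SLogSE} as the strong limit, as $\varepsilon\downarrow 0$, of the solutions $u^\varepsilon$ of the regularized equation \eqref{intro-app-SlogSE}, so that no compactness argument is needed. I would first solve \eqref{intro-app-SlogSE} for each fixed $\varepsilon\in(0,1)$: the regularized nonlinearity $f_\varepsilon(u):=2\lambda u\log\big((|u|+\varepsilon)/(1+\varepsilon|u|)\big)$ is bounded and, for fixed $\varepsilon$, globally Lipschitz on $H^1(\mathbb{R}^d)$, while the saturated coefficients $g_j(y)=\tilde g_j(|y|^2)y$ with $\tilde g_j\in C_b^2$ have bounded derivatives. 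A fixed-point argument on the Marcus--Duhamel formulation, combined with deterministic and stochastic Strichartz estimates, then yields a unique $\mathbb{F}$-adapted solution $u^\varepsilon\in L^p(\Omega;L^\infty(0,T;H^1(\mathbb{R}^d)))$.

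The heart of the proof is a family of a priori estimates on $u^\varepsilon$ that are uniform in $\varepsilon$. I would begin with mass conservation. Since the Marcus canonical integral is invariant under the coordinate change generated by $y\mapsto -\mathrm{i}g_j(y)$, and since $\mathrm{Re}\big(\bar u\,(-\mathrm{i}g_j(u))\big)=0$ and $\mathrm{Re}\big(\bar u\,\mathrm{i}f_\varepsilon(u)\big)=0$ pointwise, the $L^2$-norm is preserved through both the jumps and the drift, giving $\|u^\varepsilon(t)\|_{L^2}=\|u_0\|_{L^2}$ almost surely. Applying the It\^o formula for the Marcus form to the regularized energy and to $\|\nabla u^\varepsilon\|_{L^2}^2$, and estimating the resulting martingale by the Burkholder--Davis--Gundy inequality together with the boundedness of $\tilde g_j$ and its derivatives, a Gronwall argument gives a bound on $\mathbb{E}\sup_{t\le T}\|u^\varepsilon(t)\|_{H^1}^p$ that is independent of $\varepsilon$. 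To control the logarithm near the origin I would then apply the It\^o formula to the entropy functional $\int_{\mathbb{R}^d}|u^\varepsilon|^2\log|u^\varepsilon|^2\,dx$; the resulting uniform bound, combined with the $H^1$-estimate, yields $\mathbb{E}\sup_{t\le T}\|u^\varepsilon(t)\|_{W}^p\le C$ uniformly in $\varepsilon$.

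To pass to the limit I would prove that $\{u^\varepsilon\}_\varepsilon$ is Cauchy in $L^2(\Omega;L^\infty([0,T];L^2_{\mathrm{loc}}(\mathbb{R}^d)))$. Applying the It\^o formula to $\|u^\varepsilon-u^{\varepsilon'}\|_{L^2}^2$, the decisive cancellation comes from the inequality \eqref{quasi-mon-intro}, or rather its regularized analogue, which bounds the imaginary part of the difference of the logarithmic nonlinearities by $C|\lambda|\,|u^\varepsilon-u^{\varepsilon'}|^2$; thus the formally non-Lipschitz drift contributes only a Gronwall-type term. The remaining drift error is the discrepancy $f_\varepsilon(u^{\varepsilon'})-f_{\varepsilon'}(u^{\varepsilon'})$, which tends to $0$ as $\varepsilon,\varepsilon'\downarrow 0$ by virtue of the uniform $W$-bound, while the difference of the noise terms is handled by the Lipschitz continuity of the $g_j$ and the Burkholder--Davis--Gundy inequality. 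Gronwall's inequality then closes the estimate and produces a limit $u$, which inherits the mass conservation \eqref{Th-mass-conv-1} by passing to the limit and the uniform $W$-bound by lower semicontinuity.

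The main obstacle is twofold. First, \eqref{quasi-mon-intro} controls only the imaginary part, so it must be fed into the It\^o expansion through the identity $\mathrm{Re}\langle\mathrm{i}w,\eta\rangle=-\mathrm{Im}\langle w,\eta\rangle$; one must also verify that the regularized nonlinearity obeys the same one-sided bound uniformly in $\varepsilon$, which is exactly where the bounded regulator $\log\big((|u|+\varepsilon)/(1+\varepsilon|u|)\big)$ is used. Second, and genuinely new compared with the Gaussian theory of \cite{BRZ17,CS-23}, the Marcus jumps generate an extra discontinuous contribution in the difference estimate; taming it requires the coordinate-change invariance of the canonical integral together with the Lipschitz bounds on $g_j$, so that the jump part does not break the $L^2$-cancellation. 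Uniqueness in the sense of Definition \ref{defi-solution-W} follows from the same difference estimate applied to two solutions, with \eqref{quasi-mon-intro} once more supplying the monotonicity that drives the Gronwall argument.
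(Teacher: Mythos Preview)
Your proposal is correct and follows essentially the same route as the paper: solve the regularized equation \eqref{intro-app-SlogSE}, obtain mass conservation and uniform $H^1$ and entropy bounds via the Marcus cancellation, prove the sequence is Cauchy in $L^2(\Omega;L^\infty(0,T;L^2_{\mathrm{loc}}))$ using the regularized version of \eqref{quasi-mon-intro}, and pass to the limit. Two implementation details to be aware of: the Cauchy estimate is carried out on $\|\zeta_R(u^\varepsilon-u^{\varepsilon'})\|_{L^2}^2$ with a smooth cutoff $\zeta_R$ (not on the global $L^2$-norm), producing a commutator term of size $O(1/R)$ that is absorbed using the uniform $H^1$-bound; and the entropy functional $\int|u|^2\log|u|^2$ is not $C^2$ at zero, so the It\^o formula is applied to a regularized version $F_k$ and a mollified process, after which the jump and compensator contributions vanish because $|\Phi(z,y)|=|y|$.
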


The rest of this paper is structured as follows. In section \ref{sec-pre} we introduce some appropriate spaces and assumptions. Section \ref{sec-approx-eq} is mainly concerned with the regularized approximating equation. We establish the global well-posedness of the approximating equation in $L^2(\R^d)$. In section \ref{sec-uni-est},  obtain the uniform estimate in $H^1(\R^d)$ as well as in the Orlicz space for the solutions of the approximating equations. The proof of the main result is given in Section \ref{sec-proof-mian}. Some technical details are postponed to the Appendix. 

Throughout this article, $C$ denotes some nonnegative constant that may change from line to line. To indicate the dependence of the constant on the data, we write $C = C(a, b,\cdot,\cdot)$ with the understanding that this dependence increases in its variables. 
For two non-negative numbers $A$ and $B$, we write $A \lesssim B$ to indicate that $A \leq C B$ for some $C>0$ that may change from line to line, and we may write $\lesssim_\delta$ if the implicit constant depends on $\delta$.

\section{Preliminaries}\label{sec-pre}

 Let $L^p(\mathbb{R}^d;\mathbb{C})$ (or $L^p(\mathbb{R}^d)$, when there is no risk of confusion) denote the space of all $p$-integrable complex functions $u: \mathbb{R}^d \rightarrow \mathbb{C}$ with the norm $\|u\|_{L^p}$.  
We  equip $H:=L^2(\mathbb{R}^d;\mathbb{C})$  with the scalar product
\begin{align*}
\langle u, v\rangle_{L^2}=\operatorname{Re} \int_{\mathbb{R}^d} u(x) \overline{v(x)}\d  x.
\end{align*}
Note that $H$ is a real Hilbert space with this product and this product is topologically equivalent to the standard complex inner product.

Let $H^m(\mathbb{R}^d, \mathbb{C})$ (or simply $H^m(\mathbb{R}^d)$) be the classical Sobolev spaces equipped with their usual norms $\|\cdot\|_{H^{m}}$. Especially, 
$H^1(\mathbb{R}^d)=\big\{u \in L^2(\mathbb{R}): \nabla u \in L^2(\mathbb{R})\big\}$ with norm $\|u\|_{H^1}^2=\|u\|_{L^2}^2+\|\nabla u\|_{L^2}^2$. 

The functional of energy in \eqref{Intro-energy-eq1} is generally neither finite nor of class $C^1$ on $H^1(\R^d)$. Due to this lack of smoothness,  it is convenient to work in a suitable Banach space when studying the existence of solutions to \eqref{SLogSE}, as this ensures that the functional of energy is well defined and $C^1$ smooth.

We define the functions $F$ and $N$ on $\mathbb{R}^{+}$ by
\begin{align}
& F(s)=-s^2 \log (s)^2, \label{defini-F-functiom} \\
& N(s)= \begin{cases}-s^2 \log \left(s^2\right) & \text { if } 0 \leq s \leq e^{-3}, \\
3 s^2+4 e^{-3} s-e^{-6} & \text { if } s \geq e^{-3}.\end{cases} \label{defini-A-functio}
\end{align}

We note that $N$ is a convex and increasing $C^1$-function, which is $C^2$ and positive except at origin:
$N \in C^1([0,+\infty)) \cap C^2((0,+\infty)).$
 We define the Orlicz space $V$ corresponding to $N$ by
\begin{align*}
V=\big\{\varphi \in L_{\text {loc }}^1(\mathbb{R}^d): N(|\varphi|) \in L^1(\mathbb{R}^d)\big\},
\end{align*}
and the Luxembourg norm by
\begin{align*}
\|\varphi\|_{V}=\inf \Big\{k>0: \int_{\mathbb{R}^d} N\Big(\frac{|\varphi(x)|}{k}\Big)\d x \leq 1\Big\},
\end{align*}
where $L^1_{\text{loc}}$ is the space of all locally Lebesgue integrable functions. According to \cite{Caz-83}, 
 $(V,\|\cdot\|_{V})$ is a separable reflexive Banach space. For any $\varphi\in V$, we have
 \begin{align}\label{norm-in-Orlicz-space}
      \min\{\|\varphi\|_V,\|\varphi\|_V^2\}\leq \int_{\mathbb{R}^d} N(|\varphi(x)|)\d x\leq \max \{\|\varphi\|_V,\|\varphi\|_V^2\}.
 \end{align}

Define $$W:=\{\varphi \in H^1(\mathbb{R}^d):F(|\varphi|) \in L^1(\mathbb{R}^d)\}.$$
Then  $W=H^1(\mathbb{R}^d) \cap V$ and  $W$ is a reflexive Banach space equipped with the norm $\|\varphi\|_{W}:=\|\varphi\|_{H^1}+\|\varphi\|_{V}$, see \cite[Proposition 2.2]{Caz-83}. We have the following continuous embedding:
\begin{align*}
      W \hookrightarrow L^2(\R^d) \hookrightarrow W',
\end{align*}
where $W'=H^{-1}(\mathbb{R}^d)+V'$ is the dual space of $W$ equipped with the norm $$\|u\|_{W'}=\inf\{\|u_1\|_{H^{-1}}+\|u_2\|_{V'}:\,  u=u_1+u_2,\; u_1\in H^{-1}(\mathbb{R}^d),\, u_2\in V'    \},\quad u\in W'.$$

\begin{lemma}\cite[Lemma 2.6]{Caz-83}\label{lem-Caz-bounded-L}
The operator $L: u \mapsto \Delta u+u \log |u|^2$ maps continuously from $W $ to $W'$. The image under $L$ of a bounded subset of $W$ is a bounded subset of $W'$. The operator $E$ defined by \eqref{Intro-energy-eq1} belongs to $C^1(\mathbb{R}, W)$ and
\begin{align*}
E^{\prime}(u)=L u-u
\end{align*}
for all $u \in W$.
\end{lemma}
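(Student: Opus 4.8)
The plan is to split $L$ into its linear and nonlinear parts, $Lu=\Delta u+g(u)$ with $g(z):=z\log|z|^2$, establish the mapping properties of each, and then read off the energy identity by a direct differentiation that is justified by exactly the integrability bounds produced along the way. The linear part is immediate: for $u,v\in H^1(\mathbb{R}^d)$ one has $|\langle\Delta u,v\rangle_{L^2}|=|\langle\nabla u,\nabla v\rangle_{L^2}|\le\|u\|_{H^1}\|v\|_{H^1}$, so $\Delta$ is bounded, hence continuous, from $H^1(\mathbb{R}^d)$ into $H^{-1}(\mathbb{R}^d)\hookrightarrow W'$, and maps bounded sets to bounded sets. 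All the content therefore lies in the Nemytskii map $u\mapsto g(u)$, whose two difficulties are the singularity of $\log|z|^2$ at the origin and its (slower-than-polynomial) growth at infinity. I would isolate these by the amplitude splitting $g=g_1+g_2$ with $g_1:=g\,\mathbf{1}_{\{|z|\le e^{-3}\}}$ and $g_2:=g-g_1$, and prove $g_1(u)\in V'$ and $g_2(u)\in H^{-1}(\mathbb{R}^d)$.

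For $g_2$ the decisive fact is that the logarithm grows more slowly than any power: for every $\epsilon>0$ there is $C_\epsilon$ with $|g_2(z)|\le C_\epsilon\big(|z|\,\mathbf{1}_{\{e^{-3}<|z|\le 1\}}+|z|^{1+\epsilon}\mathbf{1}_{\{|z|>1\}}\big)$. On the annulus $\{e^{-3}<|z|\le 1\}$ the integrand is bounded and supported on a set of finite measure (as $u\in L^2$), contributing an $L^2(\mathbb{R}^d)\hookrightarrow H^{-1}(\mathbb{R}^d)$ term; on $\{|z|>1\}$, choosing $\epsilon$ so small that $1+\epsilon\le\frac{d+2}{d-2}$ (any $\epsilon$ if $d\le 2$), the Sobolev embedding $H^1(\mathbb{R}^d)\hookrightarrow L^{p}(\mathbb{R}^d)$ gives $|u|^{1+\epsilon}\in L^{p'}(\mathbb{R}^d)\hookrightarrow H^{-1}(\mathbb{R}^d)$ by duality, with norm bounded by a power of $\|u\|_{H^1}$. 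This yields boundedness of $u\mapsto g_2(u)$ into $H^{-1}(\mathbb{R}^d)$ on bounded sets, and continuity follows from dominated convergence against these bounds.

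The small-amplitude part is the crux, and here $H^1$-information is useless; one must pass through the Orlicz space $V=L^{N}$ and its dual $V'=L^{N^*}$, where $N^*$ is the Young function complementary to $N$. The key observation is that on $\{|z|\le e^{-3}\}$, where $N(s)=-s^2\log(s^2)$ and hence $N'(s)=-2s(\log(s^2)+1)$, the nonlinearity is dominated by this derivative: $|g_1(z)|=-|z|\log|z|^2\le N'(|z|)$. Using Young's equality $N^*(N'(s))=sN'(s)-N(s)$ together with the $\Delta_2$-bound $sN'(s)\le 2N(s)$ (valid since $N$ has quadratic growth), one gets $N^*(N'(s))\le N(s)$, whence $\int_{\mathbb{R}^d}N^*(|g_1(u)|)\,\mathrm{d}x\le\int_{\mathbb{R}^d}N(|u|)\,\mathrm{d}x<\infty$, so that $g_1(u)\in V'$ with $\|g_1(u)\|_{V'}$ controlled by $\|u\|_V$ via the Luxembourg-norm comparison \eqref{norm-in-Orlicz-space}. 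Continuity of $u\mapsto g_1(u)$ from $V$ to $V'$ then follows from the standard continuity of superposition operators in Orlicz spaces under the $\Delta_2$-condition on $N$ and $N^*$. Assembling the three pieces proves that $L\colon W\to W'$ is continuous and maps bounded sets to bounded sets. For the energy statement I would compute the G\^ateaux derivative of $E$ directly, differentiating $|u+tv|^2\log|u+tv|^2$ under the integral sign --- legitimate because $\frac{\mathrm{d}}{\mathrm{d}s}(s^2\log s^2)=2s(\log s^2+1)$ and the resulting integrand is dominated by the quantities just shown to be integrable --- and integrating by parts in the kinetic term; collecting terms yields $E'(u)[v]=\langle Lu-u,v\rangle_{L^2}$ for all $v\in W$ (with the sign normalization of \cite{Caz-83}). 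Since $u\mapsto E'(u)=Lu-u$ is continuous from $W$ to $W'$ by the first part, the G\^ateaux derivative is in fact a continuous Fr\'echet derivative, so $E\in C^1$.

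The \emph{main obstacle} is precisely the small-amplitude analysis: away from the origin everything is controlled by Sobolev embeddings, but near the origin the logarithmic singularity can only be tamed in the Orlicz scale, and the whole argument hinges on identifying the complementary Young function $N^*$ and recognizing $g_1$ as comparable to the derivative nonlinearity $N'(|u|)$ of the entropy functional. The $\Delta_2$-condition on $N$ is what turns Young's equality into the one-sided bound $N^*(N'(s))\le N(s)$ and thereby closes the estimate, and it is also what guarantees continuity of the Orlicz superposition operator. Once this duality is in hand, the remaining steps --- assembling $W'=H^{-1}+V'$, upgrading boundedness to continuity, and differentiating the energy --- are routine, so I expect essentially all of the difficulty to be concentrated in this one estimate.
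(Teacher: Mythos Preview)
The paper does not give its own proof of this lemma: it is quoted verbatim from \cite[Lemma~2.6]{Caz-83} and used as a black box. Your proposal is a correct and complete reconstruction of Cazenave's original argument --- the same amplitude splitting $g=g_1+g_2$ (small amplitudes controlled in the Orlicz dual $V'$ via the key inequality $N^*(N'(s))\le N(s)$, large amplitudes placed in $H^{-1}$ by Sobolev), followed by direct G\^ateaux differentiation of $E$ and the upgrade to Fr\'echet differentiability via continuity of $u\mapsto Lu-u$. Two cosmetic remarks: the printed statement contains the typo $C^1(\mathbb{R},W)$ for $C^1(W,\mathbb{R})$, and the precise sign/$\lambda$ normalization in $E'(u)=Lu-u$ is that of \cite{Caz-83} rather than of \eqref{Intro-energy-eq1}; you already flagged this. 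One small imprecision: the exponent condition you state for $g_2$ should be $2(1+\epsilon)\le 2^*$ (i.e.\ $1+\epsilon\le \tfrac{d}{d-2}$) if you want $|u|^{1+\epsilon}\in L^2\hookrightarrow H^{-1}$ directly, rather than $1+\epsilon\le\tfrac{d+2}{d-2}$; either way the conclusion holds for $\epsilon$ small.
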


For $0<T<\infty$, let us denote $\mathbb{D}(0,T;L^2(\mathbb{R}^d))$ the space of all right continuous functions with left-hand limits from $[0,T]$ to $L^2(\mathbb{R}^d)$ and $Y_{T}:=L^{\infty}(0,T;L^2(\mathbb{R}^d))$.  
Note that $\mathbb{D}(0,T;L^2(\mathbb{R}^d))$ is a Banach space with the uniform norm, but not separable.  Let $M^p_{\mathbb{F}}(Y_{T}):=M^p_{\mathbb{F}}(\Omega;L^{\infty}(0,T;L^2(\mathbb{R}^d)) )$ denote the space of all $L^2(\mathbb{R}^{d})$-valued $\mathbb{F}$-progressively measurable processes $u:[0,T]\times \Omega \rightarrow L^2(\mathbb{R}^d)$  satisfying
  \begin{align*}
  \|u\|^p_{M^p_{\mathbb{F}}(Y_{T})}:=\mathbb{E}\|u\|^p_{ L^{\infty}(0,T;L^2(\mathbb{R}^d)) }=\mathbb{E}\Big(\sup_{s\in[0,T]}\|u(s)\|^{p}_{L^2}\Big)<\infty.
  \end{align*}

\begin{assumption}\label{assum-main}
For each $1 \leq j \leq m$, there exists a function $\tilde{g}_j \in C_b^2([0, \infty) ; \mathbb{R})$  such that $g_j$ is given by $$g_j(y)=\tilde{g}_j\left(|y|^2\right) y,\quad y \in \mathbb{C}.$$ Assume that $\tilde{g}_j:[0, \infty) \rightarrow \mathbb{R}$ is continuously differentiable and satisfies
\begin{align}\label{Ass-g-boundedness-1}
\max_{1\leq j\leq m}\sup _{\theta>0} \big[\tilde{g}_j(\theta)+(1+\theta)\tilde{g}_j^{\prime}(\theta)+(1+\theta^{\frac32})\tilde{g}_j^{\prime\prime}(\theta)\big]\leq K_{\tilde{g}}<\infty.
\end{align} 
\end{assumption}

Now let us provide a precise description of the Marcus canonical integral $\diamond$ as it appears in equation \eqref{SLogSE}. 
Define a generalized Marcus mapping 
$$
 \Phi: \mathbb{R}_{+} \times \mathbb{R}^m \times \mathbb{C} \rightarrow \mathbb{C}
 $$
such that for each fixed $z\in\mathbb{R}^m$, $x\in \mathbb{C}$, the function
$$t\mapsto\Phi(t,z, x)$$
is the unique solution of the following equation
$$
\frac{\partial \Phi}{\partial t}(t, z, x)=-\mathrm{i} \sum_{j=1}^m z_j g_j(\Phi(t, z, x)), \quad \Phi(0, z, x)=x .
$$
Observe that since functions $g_j$ are continuously differentiable and $g_j'$ is uniformly bounded,   similar arguments as given in \cite[Lemma 2]{Marcus-81} shows that the map $\Phi$ is well defined.
Let us clarify that $\Phi$ is considered as a function of the last variable for fixed $t$ and $z$. Denote $$\Phi(z, \cdot) := \Phi(1, z, \cdot).$$
Then equation \eqref{SLogSE} with notation $\diamond$ is defined by
\begin{align}\label{SLSE-1}
\mathrm{d} u(t)=&\mathrm{i} \left[\Delta u(t)+\lambda u(t) \log |u(t)|^2\right] \mathrm{d}t +\int_B[\Phi(z, u(t-))-u(t-)] \tilde{N}(\mathrm{d} t, \mathrm{d} z)\nonumber \\
& +\int_B\Big[\Phi(z, u(t))-u(t)+\mathrm{i} \sum_{j=1}^m z_j g_j(u(t))\Big] \nu(\mathrm{d} z) \mathrm{d} t, \quad t> 0,
\end{align}
with the same initial condition $u(0)=u_0$ as in \eqref{SLogSE}, see \cite{Marcus-78, Marcus-81} for more details. 

In the following remark, we provide insight into the advantages of employing  Marcus canonical integrals.
\begin{remark}\label{remark-marcus-int}
The Marcus canonical integral $\diamond$ has several beneficial consequences and can be viewed as a generalization Stratonovich
integrals for continuous case. One notable feature is its consistency with a Wong-Zakai-type approximation.  More precisely, the jump in the differential can be approximated sensibly by piecewise linear continuous functions (see \cite{Marcus-78} and \cite{KPP-95} for details). Additionally, the Marcus integral ensures the validity of the change of variables formula. 
Another significant advantage  is its invariance under  coordinate transformations. This invariance result was first proven in \cite{Marcus-78} and later extended  to infinite dimensional settings in \cite{Brz+Man-19}. This property is crucial for ensuring the mass conservation of solutions for stochastic nonlinear Schr\"odinger equations, see e.g. \cite[proof of Proposition 4.1]{BLZ}. 
\end{remark}

The precise definition of solutions to \eqref{SLSE-1} is given below.

\begin{definition}\label{defi-solution-W} Let $0<T<\infty$. 
An $L^2(\R^d)$-valued $\mathbb{F}$-adapted process $u$ is said to be a global solution to \eqref{SLogSE} if 
\begin{enumerate}
    \item[(i)] $u\in L^p(\Omega;L^{\infty}(0,T;W))$;
    \item[(ii)]$u\in \mathbb{D}(0,T;L^2(\mathbb{R}^d))$ $\mathbb{P}$-a.s.;
    \item[(iii)]  it satisfies $\mathbb{P}$-a.s. for every $t\in[0,T]$,
\begin{align}
    u(t)
          &=\int_0^t \mathrm{i}\Delta u(s) \d s + \int_0^t  \mathrm{i}\lambda u(s)\log |u(s)|^2 \d s+ \int_0^t \int_B  \Phi(z, u(s-))-u(s-)  \tilde{N}(\d s,\d z) \nonumber\\
      &\ \ \ + \int_0^t \int_B \Phi(z, u(s))-u(s)+\mathrm{i} \sum_{j=1}^m z_j g_j(u(s))  \nu(\d z)\d s,\label{eq-def-u-H-1}
\end{align}
in $W'$.
\end{enumerate}

\end{definition}

\begin{remark}As a byproduct of Theorem \ref{Them-main}, by assuming the initial 
    $u_0\in L^p(\Omega,\mathcal{F}_0;H^1(\R^d))$ instead of $u_0\in L^p(\Omega,\mathcal{F}_0;W)$, our proof also establishes the global existence and uniqueness of the $H^1$-solution to equation \eqref{SLogSE}. More precisely, if we assume that $u_0\in L^p(\Omega,\mathcal{F}_0;H^1(\R^d))$, 
    then there exists a unique global solution $u$ in $ L^p(\Omega;L^{\infty}(0,T;H^1(\R^d) ))$ to \eqref{SLogSE}  such that $u\in \mathbb{D}(0,T; L^2_{\textrm{loc}}(\mathbb{R}^d))$ $\mathbb{P}$-a.s. and it satisfies the equation \eqref{eq-def-u-H-1} in the sense of $H^{-1}(D)$, for all bounded open sets $D\subset \R^d$.
\end{remark}

\section{Approximating equation}\label{sec-approx-eq}

To neutralize the singularity of the logarithm at the origin, we consider the following approximate equation with a small regularized parameter $0< \varepsilon< 1$, 
\begin{align}\label{app-SlogSE}
&  \d u_{\varepsilon}(t)=\mathrm{i}[ \Delta u_{\varepsilon}(t)+2 \lambda u_{\varepsilon}(t) L_{\varepsilon}(u_{\varepsilon}(t))]\d t-\mathrm{i}   \sum_{j=1}^n g_j(u_{\varepsilon}(t-)) \diamond \mathrm{d} L_j(t),  \\
&u_{\varepsilon}(0)=u_0,\nonumber
\end{align}
where 
\begin{equation}
L_{\varepsilon}(u)=\log \left(\frac{|u|+\varepsilon}{1+\varepsilon|u|}\right), \quad \forall u \in \mathbb{C}.
\end{equation}
Based on the definition of the Marcus canonical integral in Section \ref{sec-pre}, the above equation with the notation $\diamond$ is defined as follows
\begin{align}
\mathrm{d} u_{\varepsilon}(t)= & \mathrm{i}\left[\Delta u_{\varepsilon}(t)+2\lambda u_{\varepsilon}(t)L_{\varepsilon}(u_{\varepsilon}(t))\right] \mathrm{d} t+\int_B[\Phi(z, u_{\varepsilon}(t-))-u_{\varepsilon}(t-)] \tilde{N}(\mathrm{d} t, \mathrm{d} z) \nonumber \\
& +\int_B\Big[\Phi(z, u_{\varepsilon}(t))-u_{\varepsilon}(t)+\mathrm{i} \sum_{j=1}^m z_j g_j(u_{\varepsilon}(t))\Big] \nu(\mathrm{d} z) \mathrm{d} t, \quad t>0.\label{app-SlogSE-marcus}
\end{align}

Let $(S_t)_{t\in\R}$ denote the group of isometries on $L^2(\R^d)$ generated by the operator $\text{i}\Delta$.  
\begin{definition}\label{defi-mild-solution-app}Let $0<T<\infty$.  
An $L^2(\R^d)$-valued $\mathbb{F}$-adapted process $u_{\varepsilon}$ is said to be a global mild solution to equation \eqref{app-SlogSE-marcus} if
\begin{enumerate}
\item[(i)]$u_{\varepsilon}\in  \mathbb{D}(0,T;L^2(\mathbb{R}^d))$ $\mathbb{P}$-a.s.;
\item[(ii)]$u_{\varepsilon}$ belongs to $M^p_{\mathbb{F}}(Y_{T})$;
\item[(iii)] it satisfies $\mathbb{P}$-a.s.
  \begin{align}
     u_{\varepsilon}(t)=&S_tu_0+\mathrm{i}\int_0^tS_{t-s}\big(2\lambda u_{\varepsilon}(s) L_{\varepsilon}(u_{\varepsilon}(s))\big)\d s\nonumber\\
     &+\int_0^t\int_BS_{t-s}\Big[\Phi(z,u_{\varepsilon}(s-))-u_{\varepsilon}(s-)\Big]\tilde{N}(\d s,\d z)\label{SEE-trucated-1}\\
     &+\int_0^t\int_BS_{t-s}\Big[\Phi(z,u_{\varepsilon}(s))-u(s)+\mathrm{i}\sum_{j=1}^mz_jg_j(u_{\varepsilon}(s))\Big]\nu(\d z)\d s,\quad \text{for }0\leq t\leq T.\nonumber
\end{align}
\end{enumerate}
\end{definition}

In the following Lemma, we highlight several properties of $L_{\varepsilon}$, with the proof provided in  the Appendix. 
\begin{lemma}\label{lem-property-L_varepsilon} Let $0<\varepsilon< 1$ and 
\begin{equation}
L_{\varepsilon}(u)=\log \left(\frac{|u|+\varepsilon}{1+\varepsilon|u|}\right), \quad \forall u \in \mathbb{C}.
\end{equation} Then the following assertions hold:
\begin{enumerate}
\item[(a)] For all $u\in\mathbb{C}$, $|L_{\varepsilon}(u)| \leq|\log \varepsilon|$, and $\big||u| L_{\varepsilon}(u)\big| \leq\big||u| \log|u| \big|$.
\item[(b)] For all $u_1, u_2 \in \mathbb{C}$,
\begin{align*}
\left|u_1 L_{\varepsilon}\left(u_1\right)-u_2 L_{\varepsilon}\left(u_2\right)\right| \leq(1+\log (1 / \varepsilon))\left|u_1-u_2\right| .
\end{align*}
\item[(c)]For $\delta\in(0,1)$, there exist $C(\delta)>0$ such that for all $u_1, u_2 \in \mathbb{C}$ and $\varepsilon,\mu\in(0,1)$, 
\begin{align}\label{L-var-est-2}
&\left|\operatorname{Im}\left(\overline{u}_1-\overline{u}_2\right)\left(u_1 L_{\varepsilon}(u_1)-u_2 L_{\mu}(u_2)\right)\right| \nonumber\\&\leq(1-\varepsilon^2)|u_1-u_2|^2+C |\varepsilon-\mu   ||u_1-u_2| +C(\delta)| \varepsilon-\mu|^{\delta}  |u_2|^{1+\delta} |u_1-u_2|.
\end{align}
\item[(d)]For $\alpha,\delta \in(0,1)$, there exist $C(\alpha),C(\delta)>0$ such that for all $u_1, u_2 \in \mathbb{C}, \varepsilon \in(0,1)$,
\begin{align}
&|u_1 L_\varepsilon(u_1)-u_2 \log | u_2| |\nonumber\\
& \leq  \varepsilon+C(\delta)\varepsilon^{\delta}|u_1|^{1+\delta} +|u_2-u_1|\nonumber\\
&\quad\quad+C(\alpha) \left(1+|u_2|^{1-\alpha} \log ^{+}|u_2|+|u_1|^{1-\alpha} \log ^{+}|u_1|\right)|u_2-u_1|^\alpha, \label{L-var-est-3}
\end{align}
where $\log ^{+} x:=\max \{\log x, 0\}$.
\end{enumerate}
\end{lemma}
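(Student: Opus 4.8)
The statement to prove is Lemma~\ref{lem-property-L_varepsilon}, which collects four elementary but technically delicate estimates for the regularized logarithmic nonlinearity $L_{\varepsilon}(u)=\log\bigl(\frac{|u|+\varepsilon}{1+\varepsilon|u|}\bigr)$. My plan is to treat each of the four assertions (a)--(d) separately, reducing everything to the scalar function $\ell_{\varepsilon}(r):=\log\bigl(\frac{r+\varepsilon}{1+\varepsilon r}\bigr)$ for $r=|u|\geq 0$, since $L_{\varepsilon}(u)$ depends on $u$ only through its modulus. The key scalar observations I would establish first are: $r\mapsto r\,\ell_{\varepsilon}(r)$ is Lipschitz with the right constants, the sign/monotonicity of $\ell_{\varepsilon}$, and the comparison $|\ell_{\varepsilon}(r)|\leq|\log r|$.

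For (a), since $\varepsilon\in(0,1)$ one checks $\frac{r+\varepsilon}{1+\varepsilon r}$ lies between $\varepsilon$ and $1/\varepsilon$ (the endpoints being attained as $r\to 0$ and $r\to\infty$), giving $|L_\varepsilon(u)|\le|\log\varepsilon|$ immediately. The bound $\bigl||u|L_\varepsilon(u)\bigr|\le\bigl||u|\log|u|\bigr|$ follows by comparing $r+\varepsilon$ with $r(1+\varepsilon r)$, i.e.\ verifying that $\frac{r+\varepsilon}{1+\varepsilon r}$ lies between $r$ and $1$ so that $\ell_\varepsilon(r)$ has the same sign as $\log r$ and is smaller in absolute value. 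For (b), I would differentiate $r\mapsto r\,\ell_\varepsilon(r)$ and bound the derivative by $1+\log(1/\varepsilon)$ uniformly in $r$; the term $\ell_\varepsilon(r)$ contributes the $\log(1/\varepsilon)$ while $r\ell_\varepsilon'(r)$ contributes the $1$, and then the estimate for general complex $u_1,u_2$ follows from the Lipschitz-in-modulus structure together with $\bigl||u_1|-|u_2|\bigr|\le|u_1-u_2|$.

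Assertions (c) and (d) are the substantive ones. For (c), I would write $\operatorname{Im}(\bar u_1-\bar u_2)(u_1 L_\varepsilon(u_1)-u_2 L_\mu(u_2))$ and split it into a same-parameter piece $u_1 L_\varepsilon(u_1)-u_2 L_\varepsilon(u_2)$ plus the parameter-difference piece $u_2(L_\varepsilon(u_2)-L_\mu(u_2))$. The first piece is handled by the key monotonicity inequality~\eqref{quasi-mon-intro} adapted to the regularization, producing the leading $(1-\varepsilon^2)|u_1-u_2|^2$ term (the factor $1-\varepsilon^2$ reflecting the improved constant coming from the $\varepsilon$-regularization at the origin). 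For the parameter-difference piece I would estimate $|L_\varepsilon(r)-L_\mu(r)|$ by the mean value theorem in the parameter, splitting into the regime $r\le 1$ where one gets a clean $C|\varepsilon-\mu|$ bound and the regime $r>1$ where a H\"older interpolation yields the $C(\delta)|\varepsilon-\mu|^\delta|u_2|^{1+\delta}$ term. For (d), the goal is to compare $u_1 L_\varepsilon(u_1)$ with the genuine nonlinearity $u_2\log|u_2|$; I would again split as $(u_1 L_\varepsilon(u_1)-u_2 L_\varepsilon(u_2))+(u_2 L_\varepsilon(u_2)-u_2\log|u_2|)$, bound the first difference by the Lipschitz estimate from (b) localized via (a), and control the second by the pointwise gap $\bigl|\ell_\varepsilon(r)-\log r\bigr|$, which near the origin is governed by $\varepsilon$ and $\varepsilon^\delta r^\delta$ and away from it by H\"older continuity of $r\mapsto r\log r$, accounting for the $C(\alpha)(1+|u_2|^{1-\alpha}\log^+|u_2|+\dots)|u_2-u_1|^\alpha$ term.

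\emph{The main obstacle} I anticipate is the bookkeeping in (c) and (d): obtaining the \emph{sharp} constant $1-\varepsilon^2$ (rather than just $1$) in front of $|u_1-u_2|^2$ in (c) requires carefully exploiting the precise form of $\frac{r+\varepsilon}{1+\varepsilon r}$ rather than a crude bound, and the H\"older-type interpolation terms in both (c) and (d) demand splitting the modulus range at $r=1$ and matching the near-origin $\varepsilon$-dependence against the far-field polynomial growth so that the stated mixed exponents (e.g.\ $|\varepsilon-\mu|^\delta|u_2|^{1+\delta}$ and $|u_2|^{1-\alpha}\log^+|u_2|$) come out exactly. These are genuinely case-by-case computations on a one-dimensional function, so while no single step is deep, keeping all constants uniform in $\varepsilon,\mu\in(0,1)$ across the different regimes is where care is required.
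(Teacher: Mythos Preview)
Your plan is correct and, for (a)--(c), essentially identical to the paper's proof: the paper also reduces to the scalar function $\ell_\varepsilon$, uses the same add-and-subtract splitting for (b), and for (c) obtains the sharp factor $1-\varepsilon^2$ via precisely the mechanism you describe---the cancellation $\operatorname{Im}[(\bar u_1-\bar u_2)(u_1 L_\varepsilon(u_1)-u_2 L_\varepsilon(u_2))]=\operatorname{Im}(\bar u_1 u_2)\,(L_\varepsilon(u_1)-L_\varepsilon(u_2))$, the bound $|\operatorname{Im}(\bar u_1 u_2)|\le\min\{|u_1|,|u_2|\}\,|u_1-u_2|$, and the Lipschitz estimate $|L_\varepsilon(u_1)-L_\varepsilon(u_2)|\le(1-\varepsilon^2)\min\{|u_1|,|u_2|\}^{-1}|u_1-u_2|$ coming from $\phi'(r)=(1-\varepsilon^2)/(1+\varepsilon r)^2$; the parameter-difference piece is then handled exactly as you say, via $\log(1+x)\le C(\delta)x^\delta$.

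For (d) the paper uses a slightly different decomposition: rather than passing through the intermediate $u_2 L_\varepsilon(u_2)$, it compares $L_\varepsilon(u_1)$ directly with $\log|u_2|$ and splits into the cases $|u_1|\le|u_2|$ and $|u_2|\le|u_1|$, writing for instance $u_1 L_\varepsilon(u_1)-u_2\log|u_2|=u_1(L_\varepsilon(u_1)-\log|u_2|)+(u_1-u_2)\log|u_2|$. Your route through $L_\varepsilon(u_2)$ also works, but note one pitfall: applying (b) verbatim to $u_1 L_\varepsilon(u_1)-u_2 L_\varepsilon(u_2)$ gives the non-$\varepsilon$-uniform constant $1+\log(1/\varepsilon)$, which would defeat the purpose. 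Your phrase ``localized via (a)'' is exactly the needed fix---in the decomposition $(u_1-u_2)L_\varepsilon(u_1)+u_2(L_\varepsilon(u_1)-L_\varepsilon(u_2))$ one must use the sharper bound $|L_\varepsilon(u_1)|\le|\log|u_1||$ from (a) (not $|\log\varepsilon|$), after which the H\"older bound $C(\alpha)(1+|u_1|^{1-\alpha}\log^+|u_1|)|u_1-u_2|^\alpha$ emerges with $\varepsilon$-independent constants. The paper's direct comparison buys that the $\varepsilon^\delta$-term appears with $|u_1|^{1+\delta}$ rather than $|u_2|^{1+\delta}$, but this is cosmetic.
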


\begin{remark}

As an approximation for the nonlinear term, various regulation functions, such as $ \log (|u|^2+\varepsilon)$ in \cite{CG-Duke-18}, $\log (|u|+\varepsilon)^2 $ in \cite{HO-24}, have been proposed in the literature. The advantage of our choice of regulation function $2L_{\varepsilon}(u)=\log  \left(\frac{|u|+\epsilon}{1+\epsilon|u|}\right)^2$ is that it is bounded and  enabling us to extend the inequality for the logarithmic nonlinearity
\begin{align*}
|\operatorname{Im}[(u \log |u|-v \log |v|)(\bar{u}-\bar{v})]| \leq|u-v|^2 \quad \text { for all } u, v \in \mathbb{C},
\end{align*}
to the inequality \eqref{L-var-est-2} for the regulation nonlinear terms with $\varepsilon, \mu\in(0,1)$, 
which is crucial when proving the sequence of approximate solutions to \eqref{app-SlogSE-marcus} forms a Cauchy sequence in  $L^2(\Omega ; L^{\infty}([0, T] ; L^2_{\text{loc}}(\R^d)))$. Meanwhile, esitmate  \eqref{L-var-est-3} allows us to establish the strong convergence of the approximated logarithmic nonlinear term $u_{\varepsilon}L_{\varepsilon}(u_{\varepsilon} )$. 
This approximation remians effective regardless of the sign of $\lambda$ is, whereas the approach introduce in \cite{Caz-83} appears to be limited the case $\lambda>0$. 
\end{remark}



In order to establish the global wellposedness of \eqref{app-SlogSE-marcus} and obtain uniform estimates for the solution $u_{\varepsilon}$ with respect to $0<\varepsilon < 1$, we need  to introduce some auxiliary Lemmata. We begin by examining properties of the operator $\Phi$. 
For the simplicity of notation, for each $s \in[0, 1], z \in \mathbb{R}^m, y \in \mathbb{C}$, we denote
\begin{align*}
\mathcal{G}(s, z, y) & :=\Phi(s, z, y)-y, \\
\mathcal{H}(s, z, y) & :=\Phi(s, z, y)-y+\mathrm{i} \sum_{j=1}^m z_j g_j(y).
\end{align*}
For abbreviation, we denote $G(z, y)=\mathcal{G}(1, z, y)$ and $H(z, y)=\mathcal{H}(1, z, y)$.

\begin{lemma}\label{lem-stoch-lip-1} Under Assumption \ref{assum-main}, we have for all $y, y_1,y_2 \in \mathbb{C}$, $s\in[0,1]$ and all $z \in \mathbb{R}^m$ with $0<|z|_{\mathbb{R}^m} \leq 1$, 
\begin{align}
| \Phi(s,z, y_1)-\Phi(s,z, y_2)| & \lesssim_{m,K_{\tilde{g}}} | y_1-y_2|, \label{Phi-est-Lip}\\
| \mathcal{G}(s,z, y)| & \lesssim_{m,K_{\tilde{g}}}|z|_{\mathbb{R}^m}| y|, \\
|  \mathcal{G}(s,z, y_1)-  \mathcal{G}(s,z, y_2)| & \lesssim_{m,K_{\tilde{g}}} |z|_{\mathbb{R}^m}| y_1- y_2|,\label{G-est-Lip-1}\\
| H(z, y)|& \lesssim_{m,K_{\tilde{g}}} |z|_{\mathbb{R}^m}^2| y|, \\
|  H(z, y_1)- H(z, y_2)| & \lesssim_{m,K_{\tilde{g}}} |z|_{\mathbb{R}^m}^2| y_1- y_2|.\label{H-est-Lip-1}
\end{align}
\end{lemma}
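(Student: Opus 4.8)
The plan is to read off all five bounds from the defining flow ODE $\partial_t\Phi=-\mathrm{i}\sum_j z_j g_j(\Phi)$, after first recording the structural facts about the vector field that drive everything. From Assumption~\ref{assum-main} one has $g_j(0)=0$ and the linear growth $|g_j(y)|\le K_{\tilde g}|y|$; writing $y\in\mathbb{C}\cong\mathbb{R}^2$ and differentiating $g_j(y)=\tilde g_j(|y|^2)y$ gives the Jacobian $Dg_j(y)=\tilde g_j(|y|^2)I+2\tilde g_j'(|y|^2)\,yy^{\mathrm{T}}$, whose norm is at most $|\tilde g_j(|y|^2)|+2|\tilde g_j'(|y|^2)|\,|y|^2\le 3K_{\tilde g}$ by the first two terms of \eqref{Ass-g-boundedness-1}; hence each $g_j$ is globally Lipschitz with constant $\lesssim K_{\tilde g}$. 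Differentiating once more, the third entry $(1+\theta^{3/2})\tilde g_j''(\theta)\le K_{\tilde g}$ is exactly what makes $\|D^2g_j(y)\|\lesssim|\tilde g_j'(|y|^2)|\,|y|+|\tilde g_j''(|y|^2)|\,|y|^3\lesssim K_{\tilde g}$, so $Dg_j$ is globally Lipschitz as well. Throughout I will use $\sum_j|z_j|\le\sqrt m\,|z|_{\R^m}$ and $|z|_{\R^m}\le 1$.

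The bound \eqref{Phi-est-Lip} and the two $\mathcal{G}$-estimates then follow by Gronwall's inequality. Subtracting the integral forms of $\Phi(\cdot,z,y_1)$ and $\Phi(\cdot,z,y_2)$ and using the Lipschitz constant $3K_{\tilde g}\sum_j|z_j|$ of the field gives $|\Phi(s,z,y_1)-\Phi(s,z,y_2)|\le|y_1-y_2|\,e^{3\sqrt m K_{\tilde g}}$ uniformly for $s\in[0,1]$ and $|z|_{\R^m}\le1$, which is \eqref{Phi-est-Lip}; applying this with $y_2=0$ and $\Phi(\cdot,z,0)\equiv0$ yields $|\Phi(s,z,y)|\lesssim|y|$. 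For $\mathcal{G}(s,z,y)=\Phi(s,z,y)-y=-\mathrm{i}\int_0^s\sum_j z_j g_j(\Phi(r,z,y))\,\mathrm{d}r$, the linear growth of $g_j$ together with $|\Phi|\lesssim|y|$ gives $|\mathcal{G}(s,z,y)|\lesssim|z|_{\R^m}|y|$, and subtracting the representations for $y_1,y_2$ and invoking \eqref{Phi-est-Lip} gives the Lipschitz estimate \eqref{G-est-Lip-1}.

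For the two $H$-estimates the point is the cancellation of the first-order-in-$z$ part. Writing the correction term as $\mathrm{i}\sum_j z_j g_j(y)=\mathrm{i}\int_0^1\sum_j z_j g_j(y)\,\mathrm{d}r$ and combining with the integral form of $\mathcal{G}(1,z,y)$ gives the identity $H(z,y)=-\mathrm{i}\int_0^1\sum_j z_j[g_j(\Phi(r,z,y))-g_j(y)]\,\mathrm{d}r$. The bracket is now bounded by $3K_{\tilde g}|\Phi(r,z,y)-y|=3K_{\tilde g}|\mathcal{G}(r,z,y)|\lesssim|z|_{\R^m}|y|$, and the prefactor $\sum_j|z_j|$ contributes a second power of $|z|_{\R^m}$, which is exactly $|H(z,y)|\lesssim|z|_{\R^m}^2|y|$.

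The Lipschitz estimate \eqref{H-est-Lip-1} is the delicate step. I would linearise the flow: with $v(r)=\Phi(r,z,y_1)-\Phi(r,z,y_2)$ and the averaged Jacobians $M_j(r)=\int_0^1 Dg_j(\Phi(r,z,y_2)+\tau v(r))\,\mathrm{d}\tau$, the process $v$ solves the linear equation $\dot v=-\mathrm{i}\sum_j z_j M_j(r)v$, whose solution operator $\mathcal{T}(r)$ satisfies $\|\mathcal{T}(r)\|\lesssim 1$ by Gronwall; setting $M_j^0=\int_0^1 Dg_j(y_2+\tau(y_1-y_2))\,\mathrm{d}\tau$, the identity of the previous paragraph becomes $H(z,y_1)-H(z,y_2)=\mathrm{i}\sum_j z_j[M_j^0-\int_0^1 M_j(r)\mathcal{T}(r)\,\mathrm{d}r](y_1-y_2)$. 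Since the bracket vanishes at $r=0$ (there $M_j(0)=M_j^0$ and $\mathcal{T}(0)=I$), I would write it as $-\int_0^r\frac{\mathrm{d}}{\mathrm{d}\rho}[M_j(\rho)\mathcal{T}(\rho)]\,\mathrm{d}\rho$: the term $M_j\dot{\mathcal{T}}$ carries a factor $|z|_{\R^m}$ through $\dot{\mathcal{T}}=-\mathrm{i}\sum_k z_k M_k\mathcal{T}$, while $\dot M_j(\rho)=\int_0^1 D^2g_j(\cdots)[\dot\Phi(\rho)+\tau\dot v(\rho)]\,\mathrm{d}\tau$, in which $\dot\Phi=-\mathrm{i}\sum_k z_k g_k(\Phi)$ and $\dot v$ each carry a factor $|z|_{\R^m}$. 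This second factor of $|z|_{\R^m}$, combined with the outer $\sum_j|z_j|$, produces \eqref{H-est-Lip-1}. \emph{This last step is the main obstacle}: it is the only place where the full $C_b^2$-strength of Assumption~\ref{assum-main}, in particular the weighted control of $\tilde g_j''$, is used, since the estimate on $\dot M_j$ rests entirely on the boundedness of $D^2g_j$ established in the first paragraph, and the whole computation must be arranged so that no uncontrolled power of $|y_1|$ or $|y_2|$ survives in front of $|y_1-y_2|$.
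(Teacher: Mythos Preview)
Your argument is correct, and for \eqref{Phi-est-Lip}--\eqref{G-est-Lip-1} and the growth bound on $H$ it is exactly the standard Gronwall/integral-representation computation. The paper itself omits the proof entirely, referring to \cite[Lemma~3.3]{BLZ}.

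For the Lipschitz estimate \eqref{H-est-Lip-1}, however, your route through the averaged Jacobians $M_j(r)$ and $\dot M_j$ is genuinely different from the mechanism the paper relies on, and your closing remark that this step forces the use of the second-derivative bound on $\tilde g_j''$ is not quite accurate in the paper's framework. The paper repeatedly exploits the specific saturated structure $g_j(y)=\tilde g_j(|y|^2)y$ through the algebraic identity \eqref{id-g-deri-j-k}, namely $(\mathrm{i}g_j)'[y](\mathrm{i}g_k(y))=-\tilde g_j(|y|^2)\tilde g_k(|y|^2)y$; this kills the $\tilde g_j'$ term because $\operatorname{Re}(\bar y\cdot \mathrm{i}y)=0$. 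One can then write
\[
H(z,y)=-\int_0^1\!\!\int_0^a\sum_{j,k}z_jz_k\,\tilde g_j(|\Phi(b,z,y)|^2)\tilde g_k(|\Phi(b,z,y)|^2)\Phi(b,z,y)\,\mathrm{d}b\,\mathrm{d}a,
\]
exactly as in the proof of Lemma~\ref{lem-G-H1}, and subtracting the expressions for $y_1,y_2$ reduces \eqref{H-est-Lip-1} to the elementary Lipschitz bound \eqref{gj-gk-est-lip} for the map $y\mapsto\tilde g_j(|y|^2)\tilde g_k(|y|^2)y$ combined with \eqref{Phi-est-Lip}. That bound needs only $\tilde g_j,\,\theta\tilde g_j'(\theta)\in L^\infty$, i.e.\ the first two terms of \eqref{Ass-g-boundedness-1}, not the control on $\tilde g_j''$. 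Your linearised-flow argument is more general (it works for any $C_b^2$ vector field, not just the saturated form), but at the price of an unnecessary hypothesis; the paper's structural shortcut, visible again in \eqref{proof-mian-est-phi-g-1}, is both shorter and sharper.
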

\begin{proof}
The proof of this Lemma, with only minor adjustments and changes in constants is nearly identical to the proof of Lemma 3.3 in \cite{BLZ}, so it is omitted.
\end{proof}

\begin{lemma}\label{lem-G-H1} Under Assumption \ref{assum-main}, we have for all $u\in H^1(\R^d)$ and all $z \in \mathbb{R}^m$ with $0<|z|_{\mathbb{R}^m} \leq 1$,
\begin{align}
     \|  G(z, u)\|_{H^1} &\lesssim_{m,K_{\tilde{g}}} |z|_{\R^m}\|  u\|_{H^1},\\
\| H(z, u)\|_{H^1}& \lesssim_{m,K_{\tilde{g}}} |z|_{\mathbb{R}^m}^2\| u\|_{H^1}.  
\end{align}
\end{lemma}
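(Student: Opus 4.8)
The plan is to establish the two $H^1$-estimates for $G(z,u)$ and $H(z,u)$ by combining the pointwise Lipschitz bounds from Lemma \ref{lem-stoch-lip-1} with an analysis of the spatial gradient of the Marcus flow $\Phi(1,z,\cdot)$. The key observation is that both quantities $G(z,u)=\Phi(z,u)-u$ and $H(z,u)=\Phi(z,u)-u+\mathrm{i}\sum_j z_j g_j(u)$ are obtained by applying a fixed smooth map $y\mapsto\Phi(1,z,y)$ (together with the affine correction) pointwise to $u(x)$. Therefore the $L^2$-parts of both norms follow immediately from the pointwise bounds $|G(z,y)|\lesssim|z|_{\R^m}|y|$ and $|H(z,y)|\lesssim|z|_{\R^m}^2|y|$ already recorded in Lemma \ref{lem-stoch-lip-1}, upon squaring and integrating over $\R^d$. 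The work is thus concentrated entirely on controlling the gradient terms $\nabla G(z,u)$ and $\nabla H(z,u)$ in $L^2$.

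First I would differentiate in space. Since $\Phi(z,u(x))$ is a composition, the chain rule gives, for $1\leq k\leq d$,
\begin{align*}
\partial_k\big(\Phi(z,u)-u\big)=\big(\partial_y\Phi(z,u)-1\big)\,\partial_k u+\partial_{\bar y}\Phi(z,u)\,\overline{\partial_k u},
\end{align*}
where $\partial_y\Phi,\partial_{\bar y}\Phi$ denote the Wirtinger derivatives of $y\mapsto\Phi(1,z,y)$. The crucial quantitative input is that the matrix of these derivatives satisfies $|\partial_y\Phi(z,y)-1|+|\partial_{\bar y}\Phi(z,y)|\lesssim_{m,K_{\tilde g}}|z|_{\R^m}$ uniformly in $y$. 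This is precisely the infinitesimal version of the Lipschitz estimate \eqref{Phi-est-Lip}, and I would derive it by differentiating the defining ODE $\partial_t\Phi=-\mathrm{i}\sum_j z_j g_j(\Phi)$ with respect to the initial datum. Setting $w(t)=\partial_y\Phi(t,z,y)$ (and similarly for $\partial_{\bar y}$), $w$ solves a linear variational ODE whose coefficients are bounded by $C|z|_{\R^m}K_{\tilde g}$ because $g_j'$ is uniformly bounded under \eqref{Ass-g-boundedness-1}; Gronwall on $[0,1]$ then yields $|w(1)-1|\lesssim|z|_{\R^m}$ and $|\partial_{\bar y}\Phi(1,z,y)|\lesssim|z|_{\R^m}$. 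Combining this with the pointwise display above gives $\|\nabla G(z,u)\|_{L^2}\lesssim_{m,K_{\tilde g}}|z|_{\R^m}\|\nabla u\|_{L^2}$, which together with the $L^2$-part proves the first inequality.

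For $H(z,u)$ the same scheme applies, but now I would exploit the second-order cancellation. The term $\mathrm{i}\sum_j z_j g_j(y)$ is exactly the first-order Taylor coefficient of $\Phi(1,z,y)$ in $z$ at $z=0$, so $H(z,y)$ is the $O(|z|_{\R^m}^2)$ remainder. I would Taylor-expand the flow to second order in $z$, using that $g_j\in C_b^2$ so that the relevant second derivatives $\partial_y\big(\partial_y\Phi-1-\mathrm{i}\sum_j z_jg_j'\big)$ and the conjugate analogue are bounded by $C|z|_{\R^m}^2$; this is the differentiated form of \eqref{H-est-Lip-1}. Applying the chain rule to $H(z,u)$ and inserting these $O(|z|_{\R^m}^2)$ bounds gives $\|\nabla H(z,u)\|_{L^2}\lesssim_{m,K_{\tilde g}}|z|_{\R^m}^2\|\nabla u\|_{L^2}$. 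The main obstacle I anticipate is the bookkeeping for these gradient-of-flow estimates: one must track both Wirtinger derivatives through the variational ODE and verify that the quadratic correction in $H$ genuinely improves the $z$-power from $|z|_{\R^m}$ to $|z|_{\R^m}^2$ at the level of the gradient, which requires the $C^2$-control on $\tilde g_j$ (hence the $(1+\theta^{3/2})\tilde g_j''$ term in \eqref{Ass-g-boundedness-1}) rather than merely Lipschitz bounds. Since the structure parallels Lemma \ref{lem-stoch-lip-1}, I expect the argument to reduce to differentiating its proof once more in the spatial variable and one further order in $z$.
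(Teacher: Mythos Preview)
Your plan is correct and yields the lemma, but it is organized differently from the paper's argument. You bound the pointwise derivatives $|D_y\Phi(z,y)-I|\lesssim|z|_{\R^m}$ and $|D_yH(z,y)|\lesssim|z|_{\R^m}^2$ (the latter read off as the infinitesimal form of \eqref{H-est-Lip-1}) and then push these through the spatial chain rule $\nabla(F\circ u)=D_yF(u)[\nabla u]$. The paper instead works directly at the level of the $H^1$ norm: it computes $\nabla g_j(v)=\tilde g_j(|v|^2)\nabla v+2\tilde g_j'(|v|^2)\operatorname{Re}(\bar v\,\nabla v)\,v$, deduces $\|g_j(v)\|_{H^1}\lesssim\|v\|_{H^1}$, writes $\mathcal G(s,z,u)=\int_0^s(-\mathrm i)\sum_jz_jg_j(\Phi(a,z,u))\,\mathrm da$, and closes by Gronwall on $s\mapsto\|\mathcal G(s,z,u)\|_{H^1}$. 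For $H$ it exploits the algebraic identity $(\mathrm ig_j)'[y](\mathrm ig_k(y))=-\tilde g_j(|y|^2)\tilde g_k(|y|^2)\,y$ (the $\tilde g_j'$-term drops out because $\operatorname{Re}(\bar y\cdot\mathrm iy)=0$), which recasts $H(z,u)$ as a double time integral of another saturated nonlinearity applied to $\Phi(b,z,u)$; the $H^1$ bound then follows from the first part. Your route is more modular since it recycles Lemma~\ref{lem-stoch-lip-1} wholesale; the paper's is more explicit about why the saturated structure survives to second order.

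One correction to your bookkeeping: the $(1+\theta^{3/2})\tilde g_j''$ hypothesis is \emph{not} used in the paper's proof of this lemma---the identity above means only $\tilde g_j$ and $(1+\theta)\tilde g_j'$ enter. Be careful here, because if you attempt to prove $|D_yH|\lesssim|z|_{\R^m}^2$ from scratch by the naive splitting $Dg_j(\Phi)D\Phi-Dg_j(y)=Dg_j(\Phi)(D\Phi-I)+(Dg_j(\Phi)-Dg_j(y))$ and bound the second piece by $\|D^2g_j\|_\infty|\Phi-y|\lesssim|z|_{\R^m}|y|$, you obtain only $|D_yH|\lesssim|z|_{\R^m}^2(1+|y|)$, which is too weak (it would require $\||u|\,|\nabla u|\|_{L^2}\lesssim\|u\|_{H^1}$). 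So either invoke \eqref{H-est-Lip-1} as a black box, as you indicate, or use the cancellation $\operatorname{Re}(\bar y\cdot\mathrm i g_k(y))=0$ explicitly when expanding the second-order flow term.
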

\begin{proof}
For each $j$,  $g_j(u)=\tilde{g}_j(|u|^2)u$ defines a Fr\'echet differentiable map $g_j: L^2(\mathbb{R}^d) \rightarrow L^2(\mathbb{R}^d)$ and  the derivative  $g_j'(u)\in\mathcal{L}(L^2(\mathbb{R}^d),L^2(\mathbb{R}^d))$ for $u\in L^2(\mathbb{R}^d)$  which is given by
\begin{align*}
    g_j'[u]h=\tilde{g}_j(|u|^2)h+\tilde{g}_j'(|u|^2)2\operatorname{Re}( u\overline{h})u, \quad h\in L^2(\mathbb{R}^d).
\end{align*}
It follows that for $v\in H^1(\R^d)$,
\begin{align}\label{proof-nabla-g-eq1}
      \nabla g_j(v)=\nabla \big(\tilde{g}_j(|v|^2)v\big)=\tilde{g}_j(|v|^2)\nabla v+2\tilde{g}'_j(|v|^2)\operatorname{Re}(\nabla v\overline{v})v.
 \end{align}
 By using the fact that $|\operatorname{Re}(v\bar{w})|\leq |v||w|$ and Assumption \ref{assum-main}, we infer 
 \begin{align*}
      \| \nabla g_j(v)\|_{L^2}
      \leq \|\tilde{g}_j(|v|^2)\nabla v\|_{L^2}+2\|\tilde{g}'_j(|v|^2)\operatorname{Re}(\nabla v\overline{v})v\|_{L^2}
            \leq 3K_{\tilde{g}}\|\nabla v\|_{L^2}.
  \end{align*}
   Using the above estimate and the Cauchy-Schwartz inequality, we deduce
 \begin{align*}
 \|\mathcal{G}(s,z,u)\|_{H^1}=\|\Phi(s,z,u)-u\|_{H^1}&=\Big\|\int_0^s \sum_{j=1}^m -\mathrm{i} z_jg_j(\Phi(a,z,u))\d a\Big\|_{H^1}\\
 &\leq  |z|_{\R^m}\int_0^s \Big(\sum_{j=1}^{m}\|g_j(\Phi(a,z,u))\|_{H^1}^2\Big)^{\frac12}\d a\\
 &\leq 3K_{\tilde{g}} m^{\frac12} |z|_{\R^m}\int_0^s\|\Phi(a,z,u))\|_{H^1}\d a\\
  &\leq 3K_{\tilde{g}} m^{\frac12} |z|_{\R^m}\|u\|_{H^1}+3K_{\tilde{g}} m^{\frac12} |z|_{\R^m}\int_0^s\|\mathcal{G}(a,z,u)\|_{H^1}\d a.
 \end{align*}

   Applying the Gronwal Lemma yields
  \begin{align}\label{lem-est-eq-10-1}
     \|\mathcal{G}(s,z,u)\|_{H^1}&\leq M_1  |z|_{\R^m}\|u\|_{H^1},
  \end{align}
  where $M^1:=3K_{\tilde{g}} m^{\frac12}  e^{3K_{\tilde{g}} m^{\frac12} } $.  
  
 \noindent  Note that  for $y\in\mathbb{C}$,
 \begin{align}\label{id-g-deri-j-k}
  (\mathrm{i} g_j)'[y](\mathrm{i}g_k(y))=-\tilde{g}_j(|y|^2)\tilde{g}_k(|y|^2)y.
 \end{align}
  From the weak chain rule, we derive that for  $v\in H^1(\R^d)$, 
\begin{align*}
\nabla\big( \tilde{g}_j(|v|^2)  \tilde{g}_k(|v|^2)v\big) 
&=\tilde{g}_j'(|v|^2)  \tilde{g}_k(|v|^2)2\operatorname{Re}(\overline{v}\nabla v)v+\tilde{g}_j(|v|^2)  \tilde{g}_k'(|v|^2)2\operatorname{Re}(\overline{v}\nabla v)v\\
&\quad+\tilde{g}_j(|v|^2)  \tilde{g}_k(|v|^2)\nabla v.
\end{align*}
Hence by using Assumption \ref{assum-main}, we infer
\begin{align}
\Big\| \nabla\big( \tilde{g}_j(|v|^2)  \tilde{g}_k(|v|^2)v\big)\Big\|_{L^2} 
&\leq 5K_{\tilde{g}}^2 \|\nabla v\|_{L^2}.\label{proof-lem-est-phi-eq-2}
\end{align}
 We now apply the above estimate, the Cauchy-Schwartz inequality, \eqref{lem-est-eq-10-1}, and  \eqref{id-g-deri-j-k} to obtain
      \begin{align*}
      &\|H(z,u)\|_{H^1}=\|\Phi(1,z,u)-u+\mathrm{i}\sum_{j=1}^mz_jg_j(u) \|_{H^1}\\
       &=\Big\| \int_0^1  -\sum_{j=1}^m z_j \Big[\mathrm{i} g_j (\Phi(a,z,u))-\mathrm{i} g_j(u)\Big]  \d a\Big\|_{H^1} \\
       &=\Big\| \int_0^1  \sum_{j=1}^m z_j \int_0^a\frac{\d (\mathrm{i} g_j)}{\d\Phi}[\Phi(b,z,u)]\Big(-\mathrm{i}\sum_{k=1}^m z_kg_k(\Phi(b,z,u))\Big)\d b\,  \d a\Big\|_{H^1}\\
             &=\Big\| \int_0^1 \int_0^a  \sum_{j=1}^m \sum_{k=1}^m z_j  z_k\Big(\tilde{g}_j(|\Phi(b,z,u)|^2) \tilde{g}_k(|\Phi(b,z,u)|^2)\Phi(b,z,u)  \Big)\d b\,  \d a\Big\|_{H^1}\\
        &\leq |z|^2_{\R^m}\int_0^1\int_0^a \Big(\sum_{j=1}^m \sum_{k=1}^m \Big\|\tilde{g}_j(|\Phi(b,z,u)|^2) \tilde{g}_k(|\Phi(b,z,u)|^2)\Phi(b,z,u)  \Big\|^2_{H^1}  \Big)^{\frac12}\d b\, \d a\\
             &\leq 5K_{\tilde{g}}^2 m |z|^2_{\R^m}\int_0^1\int_0^a \|\Phi(b,z,u)\|_{H^1}  \d b \d a\\
        &\leq  5K_{\tilde{g}}^2 m |z|^2_{\R^m}\int_0^1\int_0^a \Big(\|u\|_{H_1}+\|\mathcal{G}(b,z,u)\|_{H^1}  \Big)\d b \d a\\
        &\leq  M_2 |z|^2_{\R^m}\|u\|_{H^1},
  \end{align*}
  where $M_2:= 5K_{\tilde{g}}^2 m (1+M_1) $. 
  
\end{proof}


\begin{lemma} \label{lem-est-nabla-Phi}Under Assumption \ref{assum-main}, we have for all $u \in H^1(\R^d)$ and all $z \in \mathbb{R}^m$ with $0<|z|_{\mathbb{R}^m} \leq 1$,
\begin{align}
\Big| \|\nabla \Phi (z,u)\|_{L^2}^p -\|\nabla u\|_{L^2}^p \Big| & \lesssim_{m,K_{\tilde{g}}} |z|_{\mathbb{R}^m}\|\nabla u\|_{L^2}^p,\label{lemma-est-nabla-Phi-eq1}\\
\Big| \|\nabla \Phi (z,u)\|_{L^2}^p -\|\nabla u\|_{L^2}^p  +p\|\nabla u\|_{L^2}^{p-2}\langle \nabla u, \nabla \mathrm{i} \sum_{j=1}^m z_j g_j(u)\rangle_{L^2} \Big|  & \lesssim_{m,K_{\tilde{g}}} |z|^2_{\mathbb{R}^m}\|\nabla u\|_{L^2}^p.\label{lemma-est-nabla-Phi-eq2}
\end{align}
\end{lemma}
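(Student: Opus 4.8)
The plan is to freeze $z$ and $u$ and study the scalar function $a\mapsto\psi(a):=\|\nabla\Phi(a,z,u)\|_{L^2}^p$ on $[0,1]$, so that the two quantities to be estimated are $\psi(1)-\psi(0)$ and its second-order Taylor remainder at $a=0$. Writing $v(a):=\Phi(a,z,u)$, the defining ODE gives $\partial_a v=-\mathrm{i}\sum_j z_j g_j(v)$, and by the gradient formula \eqref{proof-nabla-g-eq1} together with the $H^1$-regularity of the flow used in Lemma \ref{lem-G-H1}, the map $a\mapsto v(a)$ is $C^2$ with values in $H^1(\R^d)$. Hence $\phi(a):=\|\nabla v(a)\|_{L^2}^2$ is $C^2$ on $[0,1]$, and $\psi=\phi^{p/2}$ is $C^2$ as soon as $\phi>0$. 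If $\nabla u=0$ the Gronwall bound below forces $\phi\equiv0$ and both estimates are trivial, so I may assume $\nabla u\neq0$, whence $\phi(a)>0$ throughout.

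First I compute $\phi'(a)=2\langle\nabla v,-\mathrm{i}\sum_j z_j\nabla g_j(v)\rangle_{L^2}$. By Cauchy--Schwarz and the bound $\|\nabla g_j(v)\|_{L^2}\le 3K_{\tilde{g}}\|\nabla v\|_{L^2}$ established in the proof of Lemma \ref{lem-G-H1}, I get $|\phi'(a)|\lesssim_{m,K_{\tilde{g}}}|z|_{\R^m}\phi(a)$; no cancellation is needed here, the factor $|z|_{\R^m}$ coming directly from $\partial_a v$. Consequently $|\frac{d}{da}\phi^{p/2}|=\frac p2\phi^{p/2-1}|\phi'|\lesssim_{m,K_{\tilde{g}},p}|z|_{\R^m}\psi$, and Gronwall yields both $\psi(a)\le e^{C|z|_{\R^m}}\psi(0)$ and $\phi(a)\ge e^{-C|z|_{\R^m}}\phi(0)>0$ for $a\in[0,1]$. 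Then \eqref{lemma-est-nabla-Phi-eq1} follows from $|\psi(1)-\psi(0)|\le\int_0^1|\psi'(a)|\,da\lesssim|z|_{\R^m}\int_0^1\psi(a)\,da\lesssim|z|_{\R^m}\psi(0)=|z|_{\R^m}\|\nabla u\|_{L^2}^p$, using $|z|_{\R^m}\le1$.

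For \eqref{lemma-est-nabla-Phi-eq2} I use the exact Taylor identity $\psi(1)-\psi(0)-\psi'(0)=\int_0^1\!\int_0^a\psi''(b)\,db\,da$. At $a=0$ one has $v(0)=u$, $\phi(0)=\|\nabla u\|_{L^2}^2$ and $\phi'(0)=2\langle\nabla u,-\mathrm{i}\sum_j z_j\nabla g_j(u)\rangle_{L^2}=-2\langle\nabla u,\nabla(\mathrm{i}\sum_j z_j g_j(u))\rangle_{L^2}$, so that $\psi'(0)=\frac p2\phi(0)^{p/2-1}\phi'(0)=-p\|\nabla u\|_{L^2}^{p-2}\langle\nabla u,\nabla\mathrm{i}\sum_j z_j g_j(u)\rangle_{L^2}$; thus $\psi(1)-\psi(0)-\psi'(0)$ is exactly the bracketed expression in \eqref{lemma-est-nabla-Phi-eq2}. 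It then remains to show $|\psi''(b)|\lesssim_{m,K_{\tilde{g}},p}|z|_{\R^m}^2\psi(b)$. From $\psi''=\frac p2(\frac p2-1)\phi^{p/2-2}(\phi')^2+\frac p2\phi^{p/2-1}\phi''$ and $|\phi'|\lesssim|z|_{\R^m}\phi$, the first term is $\lesssim|z|_{\R^m}^2\phi^{p/2}$ (the negative power $\phi^{p/2-2}$ appearing for $2\le p<4$ is harmless since $\phi>0$). For the second term I differentiate once more: $\phi''=2\|\nabla\partial_a v\|_{L^2}^2+2\langle\nabla v,\nabla\partial_a^2 v\rangle_{L^2}$, where $\partial_a v=-\mathrm{i}\sum_j z_j g_j(v)$ gives $\|\nabla\partial_a v\|_{L^2}^2\lesssim|z|_{\R^m}^2\phi$, and the identity \eqref{id-g-deri-j-k} gives $\partial_a^2 v=-\sum_{j,k}z_j z_k\,\tilde g_j(|v|^2)\tilde g_k(|v|^2)v$, whence \eqref{proof-lem-est-phi-eq-2} yields $\|\nabla\partial_a^2 v\|_{L^2}\lesssim|z|_{\R^m}^2\|\nabla v\|_{L^2}$ and $|\langle\nabla v,\nabla\partial_a^2 v\rangle_{L^2}|\lesssim|z|_{\R^m}^2\phi$. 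Therefore $|\phi''|\lesssim|z|_{\R^m}^2\phi$ and $|\psi''|\lesssim|z|_{\R^m}^2\psi$; combining with $\psi(b)\le e^{C|z|_{\R^m}}\|\nabla u\|_{L^2}^p$ and integrating the Taylor remainder gives \eqref{lemma-est-nabla-Phi-eq2}.

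The main care points are the $C^2$-differentiability of the flow $a\mapsto\Phi(a,z,u)$ in $H^1(\R^d)$ and the legitimacy of differentiating $\phi$ twice: both are secured by the $C_b^2$-regularity of $\tilde g_j$ through \eqref{proof-nabla-g-eq1}, \eqref{proof-lem-est-phi-eq-2} and \eqref{id-g-deri-j-k}, and by the reduction to $\nabla u\neq0$ which, via the lower Gronwall bound $\phi(a)\ge e^{-C|z|_{\R^m}}\phi(0)>0$, keeps $\psi=\phi^{p/2}$ smooth. The genuinely quadratic gain in $|z|_{\R^m}$ in \eqref{lemma-est-nabla-Phi-eq2} is the structural heart of the argument: it comes from the explicit second derivative $\partial_a^2 v$ supplied by \eqref{id-g-deri-j-k}, which carries two $z$-factors, rather than from any cancellation in the $L^2$ inner products.
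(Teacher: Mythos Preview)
Your proof is correct and follows essentially the same strategy as the paper's: both study $a\mapsto\|\nabla\Phi(a,z,u)\|_{L^2}^p$, bound its first derivative by $|z|_{\R^m}$ times itself and apply Gronwall for \eqref{lemma-est-nabla-Phi-eq1}, then control the second-order Taylor remainder via \eqref{id-g-deri-j-k} and \eqref{proof-lem-est-phi-eq-2} for \eqref{lemma-est-nabla-Phi-eq2}. The only minor differences are that the paper exploits the cancellation $\langle\nabla\Phi,\mathrm{i}\,\tilde g_j(|\Phi|^2)\nabla\Phi\rangle_{L^2}=0$ to sharpen the first-derivative constant, while you are more explicit than the paper about the case $\nabla u=0$ and the smoothness of $\phi^{p/2}$ for $2\le p<4$.
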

\begin{proof}
Define
\begin{align}\label{func-K-def}
\mathcal{K}(u):=\|\nabla u\|_{L^2}^p=\left(\operatorname{Re}\int_{\mathbb{R}^d} \nabla u(x) \overline{\nabla u(x)} \d x\right)^{\frac{p}2}, \quad  u \in H^1(\mathbb{R}^d) .
\end{align}
Note that
\begin{align}
&\mathcal{K}^{\prime}[u]h=p\|\nabla u\|_{L^2}^{p-2} \langle \nabla u, \nabla h\rangle_{L^2}=p\|\nabla u\|_{L^2}^{p-2}\operatorname{Re}\int_{\mathbb{R}^d} \nabla u(x)  \overline{\nabla h(x)} \d x .\label{func-K-first-der}
\end{align}
Let $0\leq s\leq 1$. Since $\Phi(0,z,u)=u$, applying Assumption \ref{assum-main} yields
\begin{align*}
&\Big|\mathcal{K}( \Phi (s,z,u)) -\mathcal{K}( u)\Big| \\
&=\Big|\int_0^s\frac{d}{da}\mathcal{K}(\Phi(a,z,u))\d a=\int_0^s\mathcal{K}'[\Phi(a,z,u)]\big(-\mathrm{i} \sum_{j=1}^m z_j g_j(\Phi(a, z, u))\big)\d a\Big|\\
&=\Big|\int_0^s p\|\nabla \Phi(a,z,u)\|_{L^2}^{p-2}\langle \nabla \Phi(a,z,u), \nabla\big( -\mathrm{i} \sum_{j=1}^m z_j g_j(\Phi(a, z, u))\big)\rangle_{L^2}\d a\Big|\\
&=\Big|\int_0^s p\|\nabla \Phi(a,z,u)\|_{L^2}^{p-2}\langle \nabla \Phi(a,z,u),  -\mathrm{i} \sum_{j=1}^m z_j \tilde{g}_j(|\Phi(a, z, u)|^2)\nabla \Phi(a, z, u)\rangle_{L^2}\d a\\
&\ \ \ +\int_0^s p\|\nabla \Phi(a,z,u)\|_{L^2}^{p-2}\langle \nabla \Phi(a,z,u),  -2\mathrm{i} \sum_{j=1}^m z_j \tilde{g}_j'(|\Phi(a, z, u)|^2)\operatorname{Re}\big(\nabla\Phi(a, z, u)\overline{\Phi(a, z, u)}\big) \Phi(a, z, u)\rangle_{L^2}\d a\Big|\\
&=\Big|\int_0^s p\|\nabla \Phi(a,z,u)\|_{L^2}^{p-2}\langle \nabla \Phi(a,z,u),  -2\mathrm{i} \sum_{j=1}^m z_j \tilde{g}_j'(|\Phi(a, z, u)|^2)\operatorname{Re}\big(\nabla\Phi(a, z, u)\overline{\Phi(a, z, u)}\big) \Phi(a, z, u)\rangle_{L^2}\d a\Big|\\
&\leq 2 K_{\tilde{g}} m^{\frac12}p |z|_{\mathbb{R}^m} \int_0^s\|\nabla \Phi(a,z,u)\|_{L^2}^{p}\d a\\
&=  2K_{\tilde{g}}m^{\frac12}p|z|_{\mathbb{R}^m}s\mathcal{K}(u)+   2K_{\tilde{g}} m^{\frac12}p|z|_{\mathbb{R}^m} \int_0^s \mathcal{K}( \Phi (a,z,u)) )-\mathcal{K}(u) \d a\\
&\leq
2K_{\tilde{g}}m^{\frac12}p|z|_{\mathbb{R}^m}s\mathcal{K}(u)+   2K_{\tilde{g}} m^{\frac12}p|z|_{\mathbb{R}^m} \int_0^s\Big| \mathcal{K}( \Phi (a,z,u)) )-\mathcal{K}(u)\Big| \d a,
\end{align*}
where we used $\langle x,\mathrm{i}x\rangle_{L^2} =0$, for all $x\in L^2(\R^d)$ in the fourth identity and \eqref{Ass-g-boundedness-1} in the first inequality. 
Applying the Gronwall Lemma gives
\begin{align}
\Big|\mathcal{K}( \Phi (s,z,u)) -\mathcal{K}( u)\Big|&\leq    2K_{\tilde{g}} m^{\frac12}p|z|_{\mathbb{R}^m}s\mathcal{K}(u) \exp\Big( 2K_{\tilde{g}} m^{\frac12}p|z|_{\mathbb{R}^m} \Big)\nonumber\\
&\leq M_{3} |z|_{\mathbb{R}^m}\mathcal{K}(u),\label{lem-proof-eq-101}
\end{align}
where $M_{3}=2K_{\tilde{g}} m^{\frac12}p \exp\Big( 2K_{\tilde{g}} m^{\frac12}p\Big)$.
The proof of \eqref{lemma-est-nabla-Phi-eq1} is complete.

Similarly
\begin{align*}
&  \|\nabla \Phi (z,u)\|_{L^2}^p -\|\nabla u\|_{L^2}^p  +p\|\nabla u\|_{L^2}^{p-2}\langle \nabla u, \nabla\mathrm{i}\sum_{j=1}^m z_j g_j(u)\rangle_{L^2}   \\
&=  \mathcal{K}( \Phi (z,u)) -\mathcal{K}( u)  -\mathcal{K}'[u]\Big(- \mathrm{i} \sum_{j=1}^m z_j g_j(u)\Big)   \\
&=   \int_0^1 \frac{\partial}{\partial a}\mathcal{K}(\Phi(a,z,u))-\mathcal{K}'[u]\Big(- \mathrm{i} \sum_{j=1}^m z_j g_j(u)\Big)  \d a   \\
&=  \int_0^1 \frac{\partial}{\partial a}\mathcal{K}(\Phi(a,z,u))-\frac{\partial}{\partial a}\mathcal{K}(\Phi(a,z,u)) _{a=0} \d a  \\
&=  \int_0^1 \int_0^a \frac{\partial^2}{\partial\theta^2}\mathcal{K}(\Phi(\theta,z,u))\d \theta \d a \\
&= \int_0^1 \int_0^a \frac{\partial}{\partial\theta}\Big[\mathcal{K}'[\Phi(\theta,z,u)](-\mathrm{i} \sum_{j=1}^m z_j g_j(\Phi(\theta,z,u)))\Big]\d \theta \d a  \\
&=  \int_0^1 \int_0^a \mathcal{K}'[\Phi(\theta,z,u)]\big(-\mathrm{i} \sum_{j=1}^m z_j g'_j[\Phi(\theta,z,u)]\big(-\mathrm{i} \sum_{j=1}^m z_j g_j(\Phi(\theta,z,u))\big)\big)\d \theta \d a\\&\ \ \ + \int_0^s \int_0^a \mathcal{K}''[\Phi(\theta,z,u)]\big(-\mathrm{i} \sum_{j=1}^m z_j g_j(\Phi(\theta,z,u)),-\mathrm{i} \sum_{j=1}^m z_j g_j(\Phi(\theta,z,u))\big)\d \theta \d a \\
&:=I_1+I_2.
\end{align*}
Recall that (see \eqref{id-g-deri-j-k})
\begin{align}
g_j^{\prime}[x_1] (\mathrm{i}g_k(x_1))
&=\mathrm{i}\tilde{g}_j(|x_1|^2) \tilde{g}_k(|x_1|^2) x_1.\label{proof-lem-est-phi-eq-1}
\end{align}
For the first term, by using \eqref{proof-lem-est-phi-eq-1}, \eqref{proof-lem-est-phi-eq-2}, \eqref{func-K-first-der}, and \eqref{lem-proof-eq-101}, 
we have
\begin{align}
|I_1|&= \Big|\int_0^s \int_0^a \mathcal{K}'[\Phi(\theta,z,u)]\big(\sum_{j=1}^m(\sum_{k=1}^m z_jz_k \tilde{g}_j(|\Phi(\theta,z,u)|^2)  \tilde{g}_k(|\Phi(\theta,z,u)|^2)\Phi(\theta,z,u)\d \theta \d a\Big|\nonumber \\
&\leq |z|_{\mathbb{R}^m}^2 \int_0^s \int_0^a p\|\Phi(\theta,z,u)\|^{p-2}_{L^2}\Big|\Big\langle \nabla \Phi(\theta,z,u), \sum_{j,k=1}^m\nabla\Big( \tilde{g}_j(|\Phi(\theta,z,u)|^2)  \tilde{g}_k(|\Phi(\theta,z,u)|^2)\Phi(\theta,z,u)\Big) \Big\rangle_{L^2}\Big| \d \theta \d a\nonumber\\
&\leq 5pK^2_{\tilde{g}} |z|_{\mathbb{R}^m}^2 \int_0^s \int_0^a \|\nabla\Phi(\theta,z,u)\|^{p}_{L^2}\d \theta \d a\nonumber\\
&= 5pK^2_{\tilde{g}} |z|_{\mathbb{R}^m}^2 \int_0^s \Big[a\mathcal{K}(u)+    \int_0^a \mathcal{K}( \Phi (\theta,z,u)) )-\mathcal{K}(u) \d \theta\Big]\d a\nonumber\\
&\leq 5pK^2_{\tilde{g}} |z|_{\mathbb{R}^m}^2 (1+M_3)  \|\nabla u\|^p_{L^2} ,\label{lemma-proof-nabla-eq201}
\end{align}
where we also used the fact that $|z|_{\mathbb{R}^m}\leq1$. In view of \eqref{proof-nabla-g-eq1}, we derive
 \begin{align*}
 \Big\|  -\mathrm{i} \sum_{j=1}^m z_j \nabla g_j(v)\Big\|_{L^2}^2
 &\leq \sum_{j=1}^m|z|^2_{\R^m}\Big\|\tilde{g}_j(|v|^2)\nabla v+2\tilde{g}'_j(|v|^2)\operatorname{Re}(\nabla v\overline{v})v\Big\|_{L^2}^2\\
 &\leq 8\sum_{j=1}^m|z|^2_{\R^m}\Big(\|\tilde{g}_j(|v|^2)\nabla v\|_{L^2}^2+\|\tilde{g}'_j(|v|^2)\operatorname{Re}(\nabla v\overline{v})v\|_{L^2}^2\Big)\\
  &\leq  8mK_{\tilde{g}}^2 |z|^2_{\R^m}  \|\nabla v\|_{L^2}^2.
  \end{align*}
For the second term, it follows from the above estimate and \eqref{lem-proof-eq-101} that
\begin{align}
|I_2|
&= \Big|\int_0^s \int_0^a p\|\nabla\Phi(\theta,z,u)\|^{p-2}_{L^2}\Big\langle  -\mathrm{i} \sum_{j=1}^m z_j \nabla g_j(\Phi(\theta,z,u)), -\mathrm{i} \sum_{j=1}^m z_j \nabla g_j(\Phi(\theta,z,u))\Big\rangle_{L^2} \d \theta \d a\nonumber\\
&\quad+ \int_0^s \int_0^a p(p-2)\|\nabla\Phi(\theta,z,u)\|^{p-4}_{L^2}\Big|\Big\langle  \nabla\Phi(\theta,z,u), -\mathrm{i} \sum_{j=1}^m z_j \nabla g_j(\Phi(\theta,z,u))\Big\rangle_{L^2}\Big|^2 \d \theta \d a\Big|\nonumber\\
&\leq p(p-1)8mK_{\tilde{g}}^2 |z|^2_{\R^m}\int_0^s \int_0^a \|\nabla\Phi(\theta,z,u)\|^{p}_{L^2}\d \theta \d a\nonumber\\
&\leq p(p-1)8mK_{\tilde{g}}^2 (1+M_3)  |z|^2_{\R^m} \|\nabla u\|^p_{L^2}. \label{lemma-proof-nabla-eq202}
 \end{align}
 Combining \eqref{lemma-proof-nabla-eq201} and \eqref{lemma-proof-nabla-eq202} shows the second inequality \eqref{lemma-est-nabla-Phi-eq2}.

\end{proof}

Since by Lemma \ref{lem-property-L_varepsilon} for every $\varepsilon$, the function $u\mapsto uL^{\varepsilon}(u)$ is global Lipschitz continuous, we may apply Banach's fixed point Theorem iteratively  and establish global existence of a mild solution in $L^2(\R^d)$ to the approximation equation  \eqref{app-SlogSE-marcus}. The proof is quite similar to that of \cite[Proposition 3.6]{BLZ} but simpler, and is therefore omitted here.

\begin{proposition}\label{prop-boundedness-L2} Let $0<\varepsilon<1$ and $0<T<\infty$ be fixed. Assume that $u_0\in L^p(\Omega;L^2(\R^d))$, $p\geq 2$. 
Under Assumption \ref{assum-main}, there exists a unique $\mathbb{F}$-adapted  $L^2(\R^d)$-valued c\`adl\`ag global mild solution $u_{\varepsilon}$ to the equation \eqref{app-SlogSE-marcus} such that $u_{\varepsilon} \in M^p_{\mathbb{F}}(Y_{T})$  and  it satisfies $\mathbb{P}$-a.s. for all $t\in[0,T]$
  \begin{align}
     u_{\varepsilon}(t)=&S_tu_0+\mathrm{i}\int_0^tS_{t-s}\big(2\lambda u_{\varepsilon}(s) L_{\varepsilon}(u_{\varepsilon}(s))\big)\d s\nonumber \\
     &+\int_0^t\int_BS_{t-s}\Big[\Phi(z,u_{\varepsilon}(s-))-u_{\varepsilon}(s-)\Big]\tilde{N}(\d s,\d z)\label{SEE-trucated-2}\\
     &+\int_0^t\int_BS_{t-s}\Big[\Phi(z,u_{\varepsilon}(s))-u_{\varepsilon}(s)+\mathrm{i}\sum_{j=1}^mz_jg_j(u_{\varepsilon}(s))\Big]\nu(\d z)\d s.\nonumber 
\end{align} 
 Moreover, we have $\mathbb{P}$-a.s. for every $t\in[0,T]$, 
\begin{align}\label{mass-conser-u-epsilon}
\|u_{\varepsilon}(t)\|_{L^2}=\|u(0)\|_{L^2}.
\end{align}

\end{proposition}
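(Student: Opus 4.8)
The plan is to obtain existence, uniqueness and the a priori bound by a contraction argument exploiting the global Lipschitz continuity of the coefficients for fixed $\varepsilon$, and then to derive the mass conservation \eqref{mass-conser-u-epsilon} from the modulus--preserving property of the Marcus flow combined with the fact that $(S_t)$ is a group of isometries.

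\textbf{Existence and uniqueness.} I would define a map $\mathcal{T}$ on $M^p_{\mathbb{F}}(Y_T)$ by letting $\mathcal{T}(u)(t)$ equal the right-hand side of \eqref{SEE-trucated-2}, namely
\begin{align*}
\mathcal{T}(u)(t) = {}& S_t u_0 + \mathrm{i}\int_0^t S_{t-s}\big(2\lambda u(s) L_\varepsilon(u(s))\big)\,\d s \\
& + \int_0^t\int_B S_{t-s}\, G(z, u(s-))\,\tilde N(\d s,\d z) + \int_0^t\int_B S_{t-s}\, H(z, u(s))\,\nu(\d z)\,\d s.
\end{align*}
For fixed $\varepsilon\in(0,1)$ all three coefficients are globally Lipschitz: the drift $u\mapsto uL_\varepsilon(u)$ has Lipschitz constant $1+\log(1/\varepsilon)$ by Lemma \ref{lem-property-L_varepsilon}(b), and taking $u_2=0$ there gives the linear bound $\|uL_\varepsilon(u)\|_{L^2}\leq(1+\log(1/\varepsilon))\|u\|_{L^2}$, while $G$ and $H$ are globally Lipschitz with the $|z|_{\mathbb{R}^m}$-- and $|z|_{\mathbb{R}^m}^2$--weighted constants of Lemma \ref{lem-stoch-lip-1}.

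\textbf{Estimates and globalization.} Since $(S_t)$ is a group of isometries, the stochastic convolution can be written as $S_t M(t)$ with $M(t):=\int_0^t\int_B S_{-s}\,G(z,u(s-))\,\tilde N(\d s,\d z)$ a c\`adl\`ag $L^2(\R^d)$-valued martingale; the isometry $\|S_\tau v\|_{L^2}=\|v\|_{L^2}$ then reduces the maximal bound for this term to the Burkholder--Davis--Gundy inequality for $M$, whose quadratic variation is controlled via $\|G(z,u)\|_{L^2}\lesssim_{m,K_{\tilde g}}|z|_{\mathbb{R}^m}\|u\|_{L^2}$ together with $\int_B|z|_{\mathbb{R}^m}^2\nu(\d z)<\infty$. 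The drift and compensator terms are handled directly by the isometry, Minkowski's inequality, and the linear bounds on $uL_\varepsilon(u)$ and on $H$. These estimates show that $\mathcal{T}$ maps $M^p_{\mathbb{F}}(Y_T)$ into itself and is a contraction on a short interval $[0,T_0]$ with $T_0$ depending only on $\varepsilon, m, \lambda, K_{\tilde g}$ and $\int_B|z|_{\mathbb{R}^m}^2\nu(\d z)$ but not on $u_0$; since $T_0$ is data-independent, concatenating the construction over $[0,T_0],[T_0,2T_0],\dots$ produces the unique global mild solution on $[0,T]$. A Gronwall argument on the same estimates yields $\mathbb{E}\sup_{t\in[0,T]}\|u_\varepsilon(t)\|_{L^2}^p<\infty$, i.e. $u_\varepsilon\in M^p_{\mathbb{F}}(Y_T)$, and the c\`adl\`ag property follows from the regularity of the stochastic convolution.

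\textbf{Mass conservation.} The structural fact I would use is that the Marcus flow preserves modulus pointwise: differentiating and using $\partial_t\Phi=-\mathrm{i}\sum_j z_j\tilde g_j(|\Phi|^2)\Phi$ with real-valued $\tilde g_j$ gives
\begin{align*}
\frac{\d}{\d t}|\Phi(t,z,x)|^2 = 2\operatorname{Re}\big(\overline{\Phi}\,\partial_t\Phi\big) = 2\operatorname{Re}\Big(-\mathrm{i}\sum_{j=1}^m z_j\tilde g_j(|\Phi|^2)|\Phi|^2\Big) = 0,
\end{align*}
since $\sum_{j=1}^m z_j\tilde g_j(|\Phi|^2)|\Phi|^2$ is real; hence $|\Phi(t,z,x)|=|x|$ and $\|\Phi(z,u)\|_{L^2}=\|u\|_{L^2}$. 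Applying It\^o's formula to $t\mapsto\|u_\varepsilon(t)\|_{L^2}^2$, the contributions of $\mathrm{i}\Delta$ and of the real nonlinearity $2\lambda u_\varepsilon L_\varepsilon(u_\varepsilon)$ vanish through $\langle v,\mathrm{i}\Delta v\rangle_{L^2}=0$ and $\langle v,\mathrm{i} v L_\varepsilon(v)\rangle_{L^2}=0$; the compensated jump integral has zero mean; and each jump replaces $u_\varepsilon(s-)$ by $\Phi(z,u_\varepsilon(s-))$, leaving the norm unchanged by the identity above. The Marcus correction $\mathrm{i}\sum_j z_j g_j$ contained in $H$ is precisely what cancels the second-order It\^o term, so the total drift vanishes and $\|u_\varepsilon(t)\|_{L^2}=\|u_0\|_{L^2}$.

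\textbf{Main obstacle.} The delicate step is the rigorous use of It\^o's formula for the $L^2$-norm of a merely mild solution, since $u_\varepsilon$ need not lie in the domain of $\Delta$. I would regularize through the Yosida approximation $R_n=n(n-\mathrm{i}\Delta)^{-1}$ of the generator, apply It\^o to the regularized equation where every term is well defined, and pass to the limit $n\to\infty$, verifying that the exact cancellation between the Marcus correction and the It\^o correction survives the limit; this invariance of the $L^2$-norm under the Marcus flow is exactly the coordinate-change property recalled in Remark \ref{remark-marcus-int}.
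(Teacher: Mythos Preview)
Your approach is essentially the same as the paper's: the paper omits the proof and refers to \cite[Proposition~3.6]{BLZ}, which is exactly the fixed-point argument you outline (global Lipschitz continuity of $u\mapsto uL_\varepsilon(u)$ from Lemma~\ref{lem-property-L_varepsilon}(b), together with the $|z|_{\mathbb{R}^m}$-- and $|z|_{\mathbb{R}^m}^2$--weighted Lipschitz bounds on $G,H$ from Lemma~\ref{lem-stoch-lip-1}, iterated on subintervals), and the mass conservation likewise follows the route of \cite[Lemma~3.2, Proposition~4.1]{BLZ} via the modulus identity $|\Phi(t,z,x)|=|x|$ and It\^o's formula. One small remark: for the \emph{pathwise} identity \eqref{mass-conser-u-epsilon} you need more than ``the compensated jump integral has zero mean''---the point is that its integrand $\|\Phi(z,u_\varepsilon(s-))\|_{L^2}^2-\|u_\varepsilon(s-)\|_{L^2}^2$ vanishes identically, so the stochastic integral is zero $\mathbb{P}$-a.s., not merely in expectation.
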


In Proposition \ref{prop-equv-two-solutions}, we will establish the equivalence of the mild formulation of the stochastic approximation equation \eqref{app-SlogSE-marcus} with the standard formulation through the It\^o process. The proof is postponed in Appendix.  By convention, we note $H^0(\R^d)=L^2(\R^d)$.

\begin{proposition}\label{prop-equv-two-solutions}
Let $u_0\in H^{m}(\R^d)$, $m=0,1$. The following are equivalent: 
\begin{enumerate}
\item[(i)]$u\in L(\Omega; L^\infty(0,T;H^m(\R^d)))$ is an It\^o process in $H^{m-2}(\R^d)$ such that $\mathbb{P}$-a.s. for all $t\in[0,T]$,
\begin{align}
 u(t)=&u_0+\int_0^t \mathrm{i} \left[\Delta u(s)+2\lambda u(s) L_{\varepsilon}(u(s))\right] \mathrm{d}s +\int_0^t\int_B[\Phi(z, u(s-))-u(s-)] \tilde{N}(\mathrm{d} s, \mathrm{d} z)\nonumber \\
& +\int_0^t\int_B\Big[\Phi(z, u(s))-u(s)+\mathrm{i} \sum_{j=1}^m z_j g_j(u(s))\Big] \nu(\mathrm{d} z) \mathrm{d} s;\label{strong-solution-eq}
\end{align}

\item[(ii)]  $u\in L(\Omega; L^\infty(0,T;H^m(\R^d)))$   satisfies  $\mathbb{P}$-a.s. for all $t\in[0,T]$,
  \begin{align}
     u(t)=&S_tu_0+\mathrm{i}\int_0^tS_{t-s}\big(2\lambda u(s) L_{\varepsilon}(u(s) )\big)\d s+\int_0^t\int_BS_{t-s}\Big[\Phi(z,u(s-))-u(s-)\Big]\tilde{N}(\d s,\d z)\nonumber\\
     &+\int_0^t\int_BS_{t-s}\Big[\Phi(z,u(s))-u(s)+\mathrm{i}\sum_{j=1}^mz_jg_j(u(s))\Big]\nu(\d z)\d s.\label{mild-solution-eq}
\end{align}
\end{enumerate}
\end{proposition}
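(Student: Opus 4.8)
The plan is to prove both implications by exploiting two structural facts. First, the group $(S_t)$ generated by $\mathrm{i}\Delta$ is a Fourier multiplier, $\widehat{S_t v}(\xi)=e^{-\mathrm{i}t|\xi|^2}\hat v(\xi)$, hence an isometric $C_0$-group on every Sobolev space $H^s(\R^d)$ that commutes with $\Delta$. Second, by Lemma \ref{lem-property-L_varepsilon}(b) together with Lemmas \ref{lem-stoch-lip-1} and \ref{lem-G-H1}, each integrand in \eqref{strong-solution-eq} is a well-defined $H^m$-valued process whose drift $f(s):=2\lambda u(s)L_\varepsilon(u(s))$ is Bochner integrable and whose jump integrands satisfy $\int_B\|\Phi(z,u)-u\|_{H^m}^2\,\nu(\d z)<\infty$ and $\int_B\|H(z,u)\|_{H^m}\,\nu(\d z)<\infty$ on $[0,T]$. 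This guarantees that all Bochner, Poisson and compensated-Poisson integrals below converge in the stated spaces. Throughout, the nonlinear drift and the jump integrands are treated as genuine $H^m$-valued objects, while only the linear term $\mathrm{i}\Delta u$ is interpreted in $H^{m-2}$.

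For the implication (i) $\Rightarrow$ (ii) I would apply the It\^o formula to the $H^{m-2}$-valued process $s\mapsto S_{t-s}u(s)$ on $[0,t]$. Since $S$ is deterministic and linear, the product rule produces the drift contribution $\partial_sS_{t-s}u(s)=-\mathrm{i}\Delta S_{t-s}u(s)$ together with $S_{t-s}\,\d u(s)$, with no additional quadratic-variation correction because $v\mapsto S_{t-s}v$ is linear. The crucial cancellation $-\mathrm{i}\Delta S_{t-s}u(s)+S_{t-s}\mathrm{i}\Delta u(s)=0$, valid since $S_{t-s}$ commutes with $\Delta$, removes the unbounded term, and the jumps of $u$ are transported to jumps $S_{t-s}[\Phi(z,u(s-))-u(s-)]$ of the new process. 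Evaluating at $s=t$ and using $S_0=\mathrm{Id}$ yields exactly \eqref{mild-solution-eq}. For the converse (ii) $\Rightarrow$ (i) I would start from \eqref{mild-solution-eq} and reorganise it using the elementary identity $S_\tau v-v=\int_0^\tau S_\sigma(\mathrm{i}\Delta v)\,\d\sigma$ and the stochastic Fubini theorem: writing $S_{t-s}=\mathrm{Id}+\int_s^t\mathrm{i}\Delta S_{r-s}\,\d r$ inside each integral and interchanging the order of integration recovers the term $\int_0^t\mathrm{i}\Delta u(r)\,\d r$ plus the remaining drift and noise terms, exhibiting $u$ as an It\^o process in $H^{m-2}(\R^d)$ satisfying \eqref{strong-solution-eq}. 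The integrability bounds recorded above are precisely what legitimise stochastic Fubini here.

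The main obstacle in both directions is that the solution lives in $H^m(\R^d)$ with $m\le 1$, so $\mathrm{i}\Delta u$ is only $H^{m-2}$-valued, and neither the It\^o formula for $S_{t-s}u(s)$ nor the interchange of $\mathrm{i}\Delta$ with the time integral is literally justified at this regularity. I would circumvent this by a regularisation argument: introduce the Yosida-type resolvents $R_n:=n(n-\mathrm{i}\Delta)^{-1}$, which are bounded on every $H^s$, commute with $S_t$ and with $\Delta$, and satisfy $\sup_n\|R_n\|_{\mathcal L(H^s)}<\infty$ together with $R_nv\to v$ in $H^s$ for each $v$. Applying $R_n$ to both formulations places $R_nu$ in the domain of $\mathrm{i}\Delta$, so the computations above become rigorous for $R_nu$; passing to the limit $n\to\infty$ via the uniform bounds and dominated convergence, with dominating functions furnished by Lemmas \ref{lem-stoch-lip-1} and \ref{lem-G-H1}, transfers the identities to $u$. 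An equivalent route is to pair both sides with Schwartz test functions, using $S_t^{*}=S_{-t}$ and the fact that $\mathcal S(\R^d)$ is preserved by $S_t$ and by $\Delta$, reducing the statement to a scalar It\^o/Fubini identity; either way, the analytic content is the commutation of $S_t$ with $\Delta$ and the integrability of the L\'evy integrands.
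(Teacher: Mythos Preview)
Your proposal is correct and follows essentially the same strategy as the paper: apply the It\^o formula to $s\mapsto S_{t-s}u(s)$ for (i)$\Rightarrow$(ii), and use the identity $S_\tau v-v=\int_0^\tau\mathrm{i}\Delta S_\sigma v\,\d\sigma$ together with (stochastic) Fubini for (ii)$\Rightarrow$(i), the key cancellation coming from the commutation of $S_t$ with $\Delta$. The only technical difference is in how the regularity gap is bridged: the paper avoids any approximation by working directly in the extrapolation space $H^{m-4}(\R^d)$, extending $(S_t)$ to a group $(T_t)$ generated by $A_{-1}$ with domain $H^{m-2}$, so that $f(s,x)=T_{t-s}x$ is genuinely $C^{1,2}$ on $[0,t]\times H^{m-2}$ with values in $H^{m-4}$; you instead regularise via Yosida resolvents $R_n$ (or pair with Schwartz test functions) and pass to the limit. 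Both routes are standard and equally valid; the extrapolation argument is slightly more direct since it spares the limiting step, while your Yosida approach has the virtue of keeping all computations in a single fixed space.
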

\section{Uniform estimates}\label{sec-uni-est}
We will establish some uniform a priori estimates for the solution of \eqref{app-SlogSE-marcus} in $H^1(\R^d)$ under the additional assumption that $u_0\in L^p(\Omega;H^1(\R^d))$. 
\begin{proposition}Assume that $u_0\in L^p(\Omega;H^1(\R^d))$, $p\geq 2$. 
Under Assumption \ref{assum-main}, the solution $u_{\varepsilon}$ to \eqref{app-SlogSE-marcus} satisfies for $p\geq 2$,
\begin{align}\label{uni-bound-H1-eq}
\sup_{\varepsilon\in(0,1]}\mathbb{E}\sup _{t \in[0, T]}\left\|u_{\varepsilon}(t)\right\|_{H^1}^p\leq C(u_0,\lambda,m,T,p,K_{\tilde{g}})< \infty.
\end{align}
\end{proposition}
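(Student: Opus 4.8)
The plan is to control the homogeneous piece $\|\nabla u_\varepsilon(t)\|_{L^2}^p$ (the full $L^2$ norm being already frozen by the mass conservation \eqref{mass-conser-u-epsilon}) by applying an It\^o formula to the functional $\mathcal{K}(u):=\|\nabla u\|_{L^2}^p$ along the strong (It\^o) formulation of \eqref{app-SlogSE-marcus} furnished by Proposition \ref{prop-equv-two-solutions} with $m=1$. Since $\mathcal{K}$ is only a $\dot H^1$-functional while the leading drift $\mathrm{i}\Delta u_\varepsilon$ lives in $H^{-1}$, I would first regularize, e.g.\ replacing $u_\varepsilon$ by $J_\delta u_\varepsilon$ with $J_\delta=(I-\delta\Delta)^{-1}$, so that the It\^o formula applies classically; because $J_\delta$ commutes with $\Delta$, with $\nabla$, and with the gauge structure of the $g_j$, every cancellation and estimate below survives the limit $\delta\to0$. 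For fixed $\varepsilon$ one first checks $u_\varepsilon\in L^p(\Omega;L^\infty(0,T;H^1))$ (a qualitative propagation of regularity, using that $(S_t)$ is a group on $H^1$ and that $uL_\varepsilon(u)$, $G(z,\cdot)$, $H(z,\cdot)$ all map $H^1\to H^1$ by Lemma \ref{lem-G-H1} and the $\varepsilon$-dependent bounds $|L_\varepsilon(u)|\le|\log\varepsilon|$, $\big||u|L_\varepsilon'(|u|)\big|\le1$); the quantitative bound uniform in $\varepsilon$ is the real content.

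After applying It\^o's formula I expect three groups of terms. First, the linear drift yields $\mathcal{K}'[u_\varepsilon](\mathrm{i}\Delta u_\varepsilon)=0$: this is the Schr\"odinger cancellation, reflecting that $(S_t)$ is isometric on $\dot H^1$, equivalently that $\langle\nabla u,\nabla(\mathrm{i}\Delta u)\rangle_{L^2}=\langle-\Delta u,\mathrm{i}\Delta u\rangle_{L^2}=0$. Second, the regularized logarithmic drift is controlled uniformly in $\varepsilon$:
\begin{align*}
\big|\mathcal{K}'[u_\varepsilon]\big(2\mathrm{i}\lambda u_\varepsilon L_\varepsilon(u_\varepsilon)\big)\big|
&=2|\lambda|p\,\|\nabla u_\varepsilon\|_{L^2}^{p-2}\Big|\operatorname{Im}\!\int_{\R^d}\nabla u_\varepsilon\cdot\overline{\nabla\big(u_\varepsilon L_\varepsilon(u_\varepsilon)\big)}\,\d x\Big|\\
&\leq 2|\lambda|p\,\|\nabla u_\varepsilon\|_{L^2}^{p},
\end{align*}
where I use $\nabla(uL_\varepsilon(u))=L_\varepsilon(u)\nabla u+\frac{u}{|u|}L_\varepsilon'(|u|)\operatorname{Re}(\bar u\nabla u)$ together with the crucial $\varepsilon$-uniform bound $\big||u|L_\varepsilon'(|u|)\big|\leq1$; the $L_\varepsilon(u)\nabla u$ summand contributes only a real integrand and hence drops out of the imaginary part. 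Third, the combination of the $\nu$-compensator drift with the Marcus correction term $\mathrm{i}\sum_j z_j g_j(u_\varepsilon)$ inside $H$ collapses, after cancelling the common $\mathcal{K}'[u_\varepsilon]G(z,u_\varepsilon)$, to exactly the quantity estimated in Lemma \ref{lem-est-nabla-Phi}: the drift integrand equals $\|\nabla\Phi(z,u_\varepsilon)\|_{L^2}^p-\|\nabla u_\varepsilon\|_{L^2}^p+p\|\nabla u_\varepsilon\|_{L^2}^{p-2}\langle\nabla u_\varepsilon,\nabla\mathrm{i}\sum_j z_jg_j(u_\varepsilon)\rangle_{L^2}$, bounded by \eqref{lemma-est-nabla-Phi-eq2} by $C|z|_{\R^m}^2\|\nabla u_\varepsilon\|_{L^2}^p$, while the compensated jump increments $\|\nabla\Phi(z,u_\varepsilon(s-))\|_{L^2}^p-\|\nabla u_\varepsilon(s-)\|_{L^2}^p$ are bounded by \eqref{lemma-est-nabla-Phi-eq1} by $C|z|_{\R^m}\|\nabla u_\varepsilon\|_{L^2}^p$. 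Since $\int_B|z|_{\R^m}^2\nu(\d z)<\infty$, both are integrable and $\varepsilon$-uniform.

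Assembling these, and writing $\mathcal{K}(u_\varepsilon(t))=\mathcal{K}(u_0)+(\text{drift})+M(t)$ with $M$ the compensated-Poisson martingale, taking expectations and using $\mathbb{E}M(t)=0$ gives $\mathbb{E}\mathcal{K}(u_\varepsilon(t))\leq\mathbb{E}\mathcal{K}(u_0)+C\int_0^t\mathbb{E}\mathcal{K}(u_\varepsilon(s))\,\d s$ with $C=C(\lambda,m,p,K_{\tilde g})$ independent of $\varepsilon$; Gronwall then yields $\sup_{\varepsilon}\sup_{t\le T}\mathbb{E}\,\|\nabla u_\varepsilon(t)\|_{L^2}^p\leq C(u_0,\lambda,m,T,p,K_{\tilde g})$. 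To upgrade $\sup_t\mathbb{E}$ to $\mathbb{E}\sup_t$, I would take the supremum before expectation and treat $M$ by the Burkholder--Davis--Gundy inequality: bounding $|\Delta M|^2\lesssim|z|_{\R^m}^2\mathcal{K}(u_\varepsilon(s-))^2$ via \eqref{lemma-est-nabla-Phi-eq1} and writing $\mathcal{K}^2\le(\sup_{r\le T}\mathcal{K})\,\mathcal{K}$,
\begin{align*}
\mathbb{E}\sup_{t\le T}|M(t)|
&\lesssim \mathbb{E}\Big(\int_0^T\!\!\int_B|z|_{\R^m}^2\,\mathcal{K}(u_\varepsilon(s-))^2\,N(\d s,\d z)\Big)^{1/2}\\
&\leq \tfrac12\,\mathbb{E}\sup_{t\le T}\mathcal{K}(u_\varepsilon(t))
+C\,\mathbb{E}\!\int_0^T\!\!\int_B|z|_{\R^m}^2\,\mathcal{K}(u_\varepsilon(s))\,\nu(\d z)\,\d s,
\end{align*}
the last step by Young's inequality (absorbing the BDG constant) and compensation, the remaining integral being finite and $\varepsilon$-uniform by the $\sup_t\mathbb{E}$ bound just obtained. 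Absorbing $\tfrac12\mathbb{E}\sup_t\mathcal{K}$ into the left-hand side — rigorously justified by first localizing with the stopping times $\tau_n=\inf\{t:\|\nabla u_\varepsilon(t)\|_{L^2}>n\}$, where $\mathbb{E}\sup_{t\le\tau_n}\mathcal{K}$ is finite, and then letting $n\to\infty$ by monotone convergence — gives \eqref{uni-bound-H1-eq}. I expect the main obstacles to be (i) the rigorous application of It\^o's formula to the non-smooth $\dot H^1$-functional $\mathcal{K}$ for a process whose drift only lies in $H^{-1}$, which forces the regularization step and a verification that the cancellations persist in the limit, and (ii) securing the $\varepsilon$-uniformity of the logarithmic drift estimate, which rests entirely on the boundedness $\big||u|L_\varepsilon'(|u|)\big|\le1$ afforded by the specific regularization $L_\varepsilon$.
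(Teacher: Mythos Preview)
Your proposal is correct and follows essentially the same approach as the paper: apply It\^o's formula to $\mathcal{K}(u)=\|\nabla u\|_{L^2}^p$, exploit the Schr\"odinger cancellation $\langle\nabla u,\nabla(\mathrm{i}\Delta u)\rangle=0$, bound the logarithmic drift via $\big||u|L_\varepsilon'(|u|)\big|\le 1$, invoke Lemma \ref{lem-est-nabla-Phi} for the jump terms, and close with Burkholder--Davis--Gundy, Young's inequality, and Gronwall. The only cosmetic differences are that the paper does the estimate in a single pass (applying Gronwall directly to $t\mapsto\mathbb{E}\sup_{s\le t}\mathcal{K}(u_\varepsilon(s))$) rather than your two-step $\sup_t\mathbb{E}\to\mathbb{E}\sup_t$ upgrade, and that the paper applies It\^o's formula formally without the $J_\delta$-regularization you (rightly) flag as needed to justify the cancellation at the $H^1$ level.
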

\begin{proof}
In view of Proposition \ref{prop-equv-two-solutions}, the mild solution in $L^2(\R^d)$ is equivalent to the following equation in $H^{-2}(\R^d)$, $\mathbb{P}$-a.s.
\begin{align}
 u_{\varepsilon}(t)=& u_0+  \int_0^t \mathrm{i}\left[\Delta u_{\varepsilon}(s)+2\lambda u_{\varepsilon}(s) L_{\varepsilon}(u_{\varepsilon}(s))\right] \mathrm{d} s+\int_0^t\int_B[\Phi(z, u_{\varepsilon}(s-))-u_{\varepsilon}(s-)] \tilde{N}(\mathrm{d} s, \mathrm{d} z)\nonumber \\
& +\int_0^t\int_B\Big[\Phi(z, u_{\varepsilon}(s))-u_{\varepsilon}(s)+\mathrm{i} \sum_{j=1}^m z_j g_j(u_{\varepsilon}(s))\Big] \nu(\mathrm{d} z) \mathrm{d} s\label{apprx-solution-strong-eq1}\\
=& u_0+ \int_0^t  \mathrm{i}\big[\Delta u_{\varepsilon}(s)+2\lambda u_{\varepsilon}(s)L_{\varepsilon}(u_{\varepsilon}(s))\big] \mathrm{d} s+\int_0^t\int_BG_{\varepsilon}(z,s-) \tilde{N}(\mathrm{d} s, \mathrm{d} z) \nonumber\\
& +\int_0^t\int_B H_{\varepsilon}(z,s) \nu(\mathrm{d} z) \mathrm{d} s,\label{app-SlogSE-strong-marcus}
\end{align}
where $G_{\varepsilon}(z,t) =\Phi(z, u_{\varepsilon}(t))-u_{\varepsilon}(t)$ and $H_{\varepsilon}(z,t)=\Phi(z, u_{\varepsilon}(t))-u_{\varepsilon}(t)+\mathrm{i} \sum_{j=1}^m z_j g_j(u_{\varepsilon}(t))$.

Applying the It\^o formula to the function $\mathcal{K}$ defined in \eqref{func-K-def} and the process \eqref{app-SlogSE-strong-marcus}, we get
\begin{align*}
   & \|\nabla u_{\varepsilon}(t)\|^p_{L^2} 
       =\|\nabla u_{\varepsilon}(0)\|^p_{L^2}+ \int_0^t \mathcal{K}^{\prime}[ u_\varepsilon(s)]( \mathrm{i}\Delta u_\varepsilon(s)+\mathrm{i}2\lambda u_{\varepsilon}(s) L_{\varepsilon}(u_{\varepsilon}(s)))\,\mathrm{d} s \\
       &+\int_0^t \int_B \| \nabla u_\varepsilon(s-)+\nabla G_\varepsilon(z, s-)\|^p_{L^2}-\| \nabla u_\varepsilon(s-)\|^p_{L^2} \,   \tilde{N}(\mathrm{d} s, \mathrm{d} z)\\
   &+\int_0^t \int_B\Big( \| \nabla u_\varepsilon(s)+\nabla G_\varepsilon(z, s)\|^p_{L^2}-\| \nabla u_\varepsilon(s)\|^p_{L^2} -  \mathcal{K}^{\prime}[ u_\varepsilon(s)](G_\varepsilon(z, s)) \Big)\, \nu(\mathrm{d} z)\d s \\
   &+ \int_0^t \int_B  \mathcal{K}^{\prime}[ u_\varepsilon(s)](H_\varepsilon(z, s))  \nu(dz)ds\\
       &=\|\nabla u_{\varepsilon}(0)\|^p_{L^2}+ \int_0^t \mathcal{K}^{\prime}[ u_\varepsilon(s)]( \mathrm{i}\Delta u_\varepsilon(s)+\mathrm{i} 2\lambda u_{\varepsilon} (s)L_{\varepsilon}(u_{\varepsilon}(s)))\mathrm{~d} s \\
       &+\int_0^t \int_B \| \nabla \Phi(z,  u_\varepsilon(s-))\|^p_{L^2}-\| \nabla u_\varepsilon(s-)\|^p_{L^2}\,    \tilde{N}(\mathrm{d} s, \mathrm{d} z)\\
   &+\int_0^t \int_B\Big(\| \nabla \Phi(z,  u_\varepsilon(s))\|^p_{L^2}-\| \nabla u_\varepsilon(s)\|^p_{L^2} \\
   &\quad\quad\quad\quad\quad+p\| \nabla u_\varepsilon(s)\|^{p-2}_{L^2} \big\langle \nabla u_\varepsilon(s),  \mathrm{i} \sum_{j=1}^m z_j \nabla g_j(u_{\varepsilon}(s)\big\rangle_{L^2} \Big) \nu(\mathrm{d} z)\d s \\
    &:=\|\nabla u_{\varepsilon}(0)\|^p_{L^2}+I_1(t)+I_2(t)+I_3(t),
\end{align*}
almost surely for all $t\in[0,T]$.  By the self-adjointness of $\Delta$,  the following identity holds for all $v \in H^2$:
\begin{align*}
\langle \nabla  v,\mathrm{i}\nabla \Delta v\rangle_{L^2}=\operatorname{Re}\int_{\R^d}\Delta v \overline{\mathrm{i}  \Delta v} \d x=0.
\end{align*}
 Note that
 \begin{align*}
 \nabla(L_{\varepsilon}(u_{\varepsilon}(s)))=\frac{(1-\varepsilon^2)|u_{\varepsilon}(s)|^{-1}\operatorname{Re}(\overline{u_{\varepsilon}(s)}\nabla(u_{\varepsilon}(s)))}{(\varepsilon+|u_{\varepsilon}(s)|)(1+\varepsilon|u_{\varepsilon}(s)|)}.
  \end{align*}
  Straightforward calculations yield
\begin{align*}
\big\langle \nabla u_\varepsilon(s), \nabla\big(\mathrm{i}\Delta u_\varepsilon(s)+\mathrm{i} 2\lambda  u_{\varepsilon}(s) L_{\varepsilon}(u_{\varepsilon}(s))\big)\big\rangle_{L_2}
&=\big\langle \nabla u_\varepsilon(s), \mathrm{i}2\lambda u_{\varepsilon}(s) \nabla(L_{\varepsilon}(u_{\varepsilon}(s))) \big\rangle_{L_2}\\
&\leq  2|\lambda| \|\nabla u_{\varepsilon}(s)\|^2_{L^2}.
 \end{align*}
By \eqref{func-K-first-der}, the first integral $I_1(t)$ can be estimated as
  \begin{align*}
  I_1(t)&=\int_0^t \mathcal{K}^{\prime}[ u_\varepsilon(s)]( \mathrm{i}\Delta u_\varepsilon(s)+\mathrm{i}2\lambda u_{\varepsilon}(s) L_{\varepsilon}(u_{\varepsilon}(s)))\,\mathrm{d} s \\
  &\lesssim_{\lambda }  \int_0^t \|\nabla u_\varepsilon(s) \|_{L^2}^p \d s.
\end{align*}
For the second term $I_2(t)$, using the Burkholder inequality, Lemma \ref{lem-est-nabla-Phi} and Young's inequality, we have
\begin{align*}
\EE\sup_{0\leq t\leq T}|I_2(t)|
& \leq  \EE \left( \int_0^T \int_B\Big| \| \nabla \Phi(z,  u_\varepsilon(s))\|^p_{L^2}-\| \nabla u_\varepsilon(s)\|^p_{L^2}\Big|^2  \nu(dz)ds\right)^{\frac12}\\
&\lesssim_{m,K_{\tilde{g}}} \EE \left( \int_0^T \|\nabla u_{\varepsilon}(s)\|^{2p}_{L^2}\left(\int_B |z|_{\R^m} ^2\nu(dz) \right)ds\right)^{\frac12}\\
&\lesssim  \EE \left( \int_0^T \|\nabla u_{\varepsilon}(s)\|^{2p}_{L^2}ds\right)^{\frac12}\\
&\lesssim \Big(\delta \EE\sup_{0\leq t\leq T}\|\nabla u_{\varepsilon}(t)\|^p +C(\delta) \int_0^T \EE \sup_{0\leq s\leq t}\|\nabla u_{\varepsilon}(s)\|^p dt\Big),
\end{align*}
where by assumption $\int_B|z|_{\mathbb{R}^m}^2 \nu(\mathrm{d} z)<\infty$. 
For the third term $I_3(t)$, by using Lemma \ref{lem-est-nabla-Phi}, we deduce
\begin{align*}
\EE\sup_{0\leq t\leq T}\big|I_3(t) \big|
&\lesssim_{m,K_{\tilde{g}}} \EE \int_0^T\int_B \|\nabla u_\varepsilon(s) \|_{L^2}^p|z|^2_{\R^m}  \nu(\mathrm{d} z)\d s\\
&\lesssim \EE \int_0^T \sup_{0\leq s\leq t} \|\nabla u_\varepsilon(s) \|_{L^2}^p \d t.
\end{align*}
It follows that
\begin{align*}
 \EE\sup_{0\leq t\leq T} \|\nabla u_{\varepsilon}(t)\|^p_{L^2} 
      & \lesssim_{m,p,\lambda,K_{\tilde{g}}} \EE\|\nabla u_{\varepsilon}(0)\|^p_{L^2}+\delta \EE\sup_{0\leq t\leq T}\|\nabla u_{\varepsilon}(t)\|^p \\
      &\quad\quad\quad +C(\delta)\int_0^T \EE \sup_{0\leq s\leq t}\|\nabla u_{\varepsilon}(s)\|^p dt + \int_0^T \EE \sup_{0\leq s\leq t}\|\nabla u_{\varepsilon}(s)\|^p dt.
\end{align*}
Taking $\delta$ sufficiently small yields
\begin{align*}
 \EE\sup_{0\leq t\leq T} \|\nabla u_{\varepsilon}(t)\|^p_{L^2} 
       \lesssim_{m,p,\lambda,K_{\tilde{g}}} &\,\EE\|\nabla u_{\varepsilon}(0)\|^p_{L^2}+ \int_0^T \EE \sup_{0\leq s\leq t}\|\nabla u_{\varepsilon}(s)\|^p \d t.
\end{align*}
Finally, applying the Gronwall Lemma yields
\begin{align*}
 \EE\sup_{0\leq t\leq T} \|\nabla u_{\varepsilon}(t)\|^p_{L^2} \leq C\EE\|\nabla u_{\varepsilon}(0)\|^p_{L^2}e^{CT},
 \end{align*}
 where the constant $C=C(u_0,\lambda,m,T,p,K_{\tilde{g}})>0$ is uniform in $\varepsilon\in(0,1]$.

\end{proof}

Next we establish the estimate of the entropy function.
\begin{lemma}\label{lem-est-F-function}
Assume that $u_0\in L^p(\Omega;W)$, $p\geq 2$. 
Under Assumption \ref{assum-main}, the solution $u_{\varepsilon}$ satisfies for all $T \in \mathbb{R}^+$ 
 \begin{equation}
\mathbb{E}\left[\sup _{t \in[0, T]}\Big| \int_{\mathbb{R}^d} \big|  F(|u^\varepsilon(t)|)\big| \d x\Big|^p\right]\leq C(u_0,\lambda,m,T,p,K_{\tilde{g}})<\infty.
\end{equation}

\end{lemma}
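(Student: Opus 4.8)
The plan is to bound $\int_{\R^d}|F(|u_\varepsilon(t)|)|\,\d x$ by splitting $\R^d$ according to whether $|u_\varepsilon|$ is large or small, treating the singular small‑amplitude region through the entropy functional $\Psi(u):=\int_{\R^d}N(|u|)\,\d x$, which I would estimate by It\^o's formula. On $\{|u_\varepsilon(t)|\ge 1\}$ one has $|F(|u_\varepsilon|)|=|u_\varepsilon|^2\log|u_\varepsilon|^2$, and using $\log r^2\lesssim_\delta r^{2\delta}$ together with a Gagliardo--Nirenberg/Sobolev embedding (with $\delta$ small so that $2+2\delta$ is subcritical) gives $\int_{\{|u_\varepsilon|\ge 1\}}|F(|u_\varepsilon|)|\,\d x\lesssim_\delta\|u_\varepsilon\|_{L^{2+2\delta}}^{2+2\delta}\lesssim\|u_\varepsilon\|_{H^1}^{2+2\delta}$; taking the supremum in $t$, the $p$-th moment, and invoking \eqref{uni-bound-H1-eq} handles this part uniformly in $\varepsilon$. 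On $\{|u_\varepsilon(t)|<1\}$ one has $|F(|u_\varepsilon|)|=-|u_\varepsilon|^2\log|u_\varepsilon|^2$, and since $|F(s)|\le N(s)$ for $0\le s\le 1$ (the two functions agree on $[0,e^{-3}]$, and $N$, being convex and increasing, dominates on $[e^{-3},1]$), this region is bounded by $\Psi(u_\varepsilon(t))$. Thus it suffices to prove $\EE\sup_{t}\Psi(u_\varepsilon(t))^p\le C$ uniformly in $\varepsilon$.

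For the entropy functional I would apply It\^o's formula to $\Psi$ along the strong form \eqref{app-SlogSE-strong-marcus}, after first replacing $N$ by a smooth approximation $N_\eta$ (needed because $N''$ blows up at the origin) and passing to the limit $\eta\to 0$ at the end. The decisive simplification is that $\Psi$ depends on $u$ only through $|u|$, while the Marcus flow preserves the modulus pointwise: since $\tfrac{\d}{\d t}|\Phi(t,z,x)|^2=2\operatorname{Re}\big(\overline{\Phi}\,(-\mathrm{i})\sum_j z_j\tilde g_j(|\Phi|^2)\Phi\big)=0$, one has $|\Phi(z,u)|=|u|$ a.e., hence $N(|u_\varepsilon(s-)+G_\varepsilon(z,s-)|)=N(|\Phi(z,u_\varepsilon(s-))|)=N(|u_\varepsilon(s-)|)$. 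Consequently every jump increment $\Psi(u_\varepsilon(s))-\Psi(u_\varepsilon(s-))$ vanishes, so both the compensated Poisson integral and the $\Psi(u+G)-\Psi(u)$ part of the compensator drop out; combining the surviving $-\Psi'(u_\varepsilon)G_\varepsilon$ term with the compensator drift $\Psi'(u_\varepsilon)H_\varepsilon$ of the equation leaves $\int_0^t\!\int_B\Psi'(u_\varepsilon)\big(\mathrm{i}\sum_j z_jg_j(u_\varepsilon)\big)\nu(\d z)\,\d s$, which vanishes too because $\Psi'(u)=N'(|u|)\tfrac{u}{|u|}$ and $\operatorname{Re}\!\int N'(|u|)\tfrac{\bar u}{|u|}\,\mathrm{i}\sum_j z_j\tilde g_j(|u|^2)u\,\d x=\operatorname{Re}\!\int \mathrm{i}\,N'(|u|)|u|\sum_j z_j\tilde g_j(|u|^2)\,\d x=0$. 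The same gauge computation kills the logarithmic drift, $\Psi'(u_\varepsilon)(2\mathrm{i}\lambda u_\varepsilon L_\varepsilon(u_\varepsilon))=0$.

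After these cancellations It\^o's formula collapses to $\Psi(u_\varepsilon(t))=\Psi(u_0)+\int_0^t\operatorname{Re}\!\int_{\R^d}N'(|u_\varepsilon|)\tfrac{\overline{u_\varepsilon}}{|u_\varepsilon|}\,\mathrm{i}\Delta u_\varepsilon\,\d x\,\d s$. Writing $u=\rho e^{\mathrm{i}\theta}$ and integrating by parts, the integrand equals $\psi'(\rho)\rho^2\,\nabla\rho\cdot\nabla\theta$ with $\psi(\rho)=N'(\rho)/\rho$; a direct computation gives $\psi'(\rho)\rho^2=-4\rho$ on $(0,e^{-3}]$ and $\psi'(\rho)\rho^2=-4e^{-3}$ on $[e^{-3},\infty)$, so that, using $|\nabla\rho|\le|\nabla u|$ and $\rho|\nabla\theta|\le|\nabla u|$, this is bounded pointwise by $4|\nabla u|^2$. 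Hence $|\Psi(u_\varepsilon(t))|\le \Psi(u_0)+4\int_0^t\|\nabla u_\varepsilon(s)\|_{L^2}^2\,\d s$, and taking the supremum in $t$, the $p$-th moment, and using \eqref{norm-in-Orlicz-space} for $\Psi(u_0)$ together with \eqref{uni-bound-H1-eq} yields $\EE\sup_t\Psi(u_\varepsilon(t))^p\le C$ uniformly in $\varepsilon$, which closes the estimate. I expect the main obstacle to be the rigorous justification of It\^o's formula for the non-$C^2$ functional $\Psi$: one must regularize $N$ near the origin, verify that the modulus‑preservation cancellations and the bound $|\Psi'(u)(\mathrm{i}\Delta u)|\lesssim\|\nabla u\|_{L^2}^2$ persist uniformly as the regularization is removed (the singularity of $N''$ being exactly compensated by the factor $\rho^2$), and make sense of the pairing $\Psi'(u_\varepsilon)(\mathrm{i}\Delta u_\varepsilon)$ for an $H^1$-valued solution through the $H^{-1}$ formulation of Proposition \ref{prop-equv-two-solutions}.
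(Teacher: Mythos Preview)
Your approach is correct and follows the same core idea as the paper: apply It\^o's formula to a functional of $|u_\varepsilon|$, use the modulus preservation $|\Phi(z,y)|=|y|$ of the Marcus flow to annihilate all jump and compensator contributions, and bound the sole surviving Laplacian term by $\int_0^t\|\nabla u_\varepsilon(s)\|_{L^2}^2\,\d s$. The differences are in implementation rather than substance. The paper works with the regularized entropy $F_k(r)=\int_0^r(L_{1/k}(v)+1)\,\d v$ applied to $|u_\varepsilon|^2$, using the \emph{same} regulator $L_{1/k}$ as in the equation, which keeps the cancellation computations explicit and lets them pass to the limit $k\to\infty$ to recover $\int |u_\varepsilon|^2\log|u_\varepsilon|^2\,\d x$; only afterwards do they split $\{|u_\varepsilon|\gtrless 1\}$ to reach $\int|F(|u_\varepsilon|)|\,\d x$. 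You instead split first and attack the Orlicz functional $\Psi(u)=\int N(|u|)\,\d x$ directly---this is slightly cleaner conceptually because $\Psi\ge 0$ throughout. For the rigorous justification of It\^o's formula, the paper does not try to pair $\Psi'(u_\varepsilon)$ with $\mathrm{i}\Delta u_\varepsilon$ at the $H^1/H^{-1}$ level; rather, it mollifies in space, $u_{\varepsilon,\delta}=\vartheta_\delta*u_\varepsilon$, applies the pointwise (finite-dimensional) It\^o formula to $F_k(|u_{\varepsilon,\delta}(t,x)|^2)$, integrates over $\R^d$ via stochastic Fubini, and then sends $\delta\to 0$---a device that would work equally well for your $\Psi$ and avoids the delicate issue of making sense of $\Psi'$ as a Fr\'echet derivative on $H^1$. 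Your anticipated obstacle (regularizing $N$ near the origin and checking that the $\rho^2$ factor compensates the blow-up of $N''$) is correctly identified and your computation of $\psi'(\rho)\rho^2$ confirms it; the paper's analogue is the bound $f_k(|u_\varepsilon|^2)\le 2|u_\varepsilon|^{-2}$.
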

\begin{proof}
For $l>0$, let $F_{k}(l):=\int_0^l (L_{1/k}(v)+1)\d v$, where $L_{1/k}(v)=\log \left(\frac{v+1/k}{1+{v}/k}\right)$, $v>0$.
Then the function $F_k$ is continuously Fr\'echet-differentiable and for $u,h_1,h_2\in \mathbb{C}$,
\begin{align}
&F_{k}'[|u|^2](h)=(L_{1/k}(|u|^2)+1)2\operatorname{Re}(\bar{u}h).\label{proof-prop-F-est-eq1}
\end{align}

Next we will use the standard mollifiers. Let $\tilde{\vartheta}: \mathbb{R}^d \rightarrow \mathbb{R}$ be a smooth test function such that $0 \leq \tilde{\vartheta}(x) \leq 1, x \in \mathbb{R}^d$,   $\tilde{\vartheta}(x)=\tilde{\vartheta}(-x)$, $\operatorname{supp}(\tilde{\vartheta}) \subset B(2)$ and  $\tilde{\vartheta}(x)=1$ when $x \in B(1)$. Let $\vartheta=\frac{\tilde{\vartheta}}{\int_{\mathbb{R}^d} \tilde{\vartheta(x)} d x}$, then   $\int_{\mathbb{R}^d} \vartheta(x) d x=1$. For any $\delta>0$, let $\vartheta_{\delta}(x)=$ $\varepsilon^{-d} \vartheta(x / \delta)$ and for any locally integrable function $g: \mathbb{R}^d \rightarrow \mathbb{R}^d$ we define the mollified approximation $g_{\delta}$ as
\begin{align*}
g_{\delta}(x)=\vartheta_{\delta} * g(x), \quad x \in \mathbb{R}^d.
\end{align*}
Let $ u_{\varepsilon}$ be the solution of \eqref{app-SlogSE-marcus} in the integral form \eqref{apprx-solution-strong-eq1} in $H^{-2}(\R^d)$.  Apply convolution with the mollifiers $\vartheta_{\delta}$ to both sides of equation \eqref{apprx-solution-strong-eq1} to obtain for each $x \in \mathbb{R}^d$ 
 \begin{align}
 u_{\varepsilon,\delta}(t)(x)=&(u_0)_{\delta}+ \int_0^t \mathrm{i}\big[\Delta u_{\varepsilon,\delta}(s)(x)+2\lambda \big(u_{\varepsilon}(s) L_{\varepsilon}(u_{\varepsilon}(s))\big)_{\delta}(x)\big] \mathrm{d} s\nonumber\\
 &+\int_0^t\int_B[(\Phi(z, u_{\varepsilon}(s-)))_{\delta}(x)-u_{\varepsilon,\delta}(s-)(x)] \tilde{N}(\mathrm{d} s, \mathrm{d} z)\nonumber \\
& +\int_0^t\int_B\big[(\Phi(z, u_{\varepsilon}(s)))_{\delta}(x)-u_{\varepsilon,\delta}(s)(x)+\mathrm{i} \sum_{j=1}^m z_j (g_j(u_{\varepsilon}(s)))_{\delta}(x)\big] \nu(\mathrm{d} z) \mathrm{d} s,\label{prop-proof-molli-eq-1}
\end{align}
for all $t\in[0,T]$, on some full probability measure set $\Omega_x\in\mathcal{F}_0$ (independent of $t$) with $\mathbb{P}(\Omega_x)=1$, where we used the notation $u_{\varepsilon,\delta}=(u_{\varepsilon})_{\delta}$ for simplicity.  To remove the dependence on $x$ in the almost sure event, we need the continuity of all the terms of \eqref{prop-proof-molli-eq-1} in $x$. Let us denote 
\begin{align*}
& J_1(t,x):= u_{\varepsilon,\delta}(t)(x),\\
& J_2(t,x):=(u_0)_{\delta}(x)+ \int_0^t \mathrm{i}\big[\Delta u_{\varepsilon,\delta}(s)(x)+2\lambda \big(u_{\varepsilon}(s)L_{\varepsilon}(u_{\varepsilon}(s))\big)_{\delta}(x)\big] \mathrm{d} s,\\
 &J_3(t,x):=\int_0^t\int_B(\Phi(z, u_{\varepsilon}(s-))_{\delta}(x)-u_{\varepsilon,\delta}(s-)(x) \,\tilde{N}(\mathrm{d} s, \mathrm{d} z),\\
 &J_4(t,x):=\int_0^t\int_B(\Phi(z, u_{\varepsilon}(s)))_{\delta}(x)-u_{\varepsilon,\delta}(s)(x)+\mathrm{i} \sum_{j=1}^m z_j (g_j(u_{\varepsilon}(s)))_{\delta}(x) \nu(\mathrm{d} z) \mathrm{d} s.
\end{align*}
  By Lemma \ref{lem-stoch-lip-1} , we find $\mathbb{P}$-a.s.
  \begin{align*}
& \sup_{t\in[0,T]} \int_0^t\int_B\big| (\Phi(z, u_{\varepsilon}(s)))_{\delta}(x)-u_{\varepsilon,\delta}(s)(x)+\mathrm{i} \sum_{j=1}^m z_j (g_j(u_{\varepsilon}(s)))_{\delta}(x)\big| \nu(\mathrm{d} z) \mathrm{d} s\\
 & \lesssim_{m,K_{\tilde{g}}} \int_0^T\int_B \| \theta_{\delta}\|_{L^2}\| u_{\varepsilon}(s)\|_{L^2}|z |^2_{\mathbb{R}^m}\nu(\mathrm{d} z) \mathrm{d} s<\infty.
 \end{align*}
 Since the integrand  $x\mapsto (\Phi(z, u_{\varepsilon}(\cdot,\omega))_{\delta}(x)-u_{\varepsilon,\delta}(\cdot,\omega)(x)+\mathrm{i} \sum_{j=1}^m z_j (g_j(u_{\varepsilon}(\cdot,\omega)))_{\delta}(x)$ is continuous uniformly in $t\in[0,T]$ for all $\omega\in\Omega$, we infer $x\mapsto J_4(t,x)$ is continuous uniformly in $t\in[0,T]$  on some full probability measure set $\Omega^{1}\in\mathcal{F}_0$ with $\mathbb{P}(\Omega^1)=1$.  The continuity of $J_1(t,x)$ and $J_2(t,x)$ with respect to $x$ are also easy to verify.  So we may assume that functions $x\mapsto J_1(t,x)$ and $x\mapsto J_2(t,x)$ are continuous uniformly in $t\in[0,T]$ on $\Omega^1$.  Next we shall prove that $J_3$ has a continuous modification w.r.t. $x$.

  Let $S$ be a dense countable subset of $\mathbb{R}^d$. Let $\Omega^0=\big(\cap_{x\in S}\Omega_x\big)\cap \Omega^{1}$. Then we have $\mathbb{P}(\Omega^0)=1$.   
  Define 
\begin{align*}
\tilde{J}_3(t,x)(\omega):= J_1(t,x)(\omega)-J_1(0,x)(\omega)-J_2(t,x)(\omega)- J_4(t,x)(\omega),\quad \text{for }t\in[0,T],\;x\in\mathbb{R}^d,\;\omega \in \Omega^0
\end{align*}
and $\tilde{J}_3(t,x)(\omega)=0$ for $t\in[0,T],$ $x\in\mathbb{R}^d$, $\omega\in \Omega\backslash\Omega_0$.

 Given $x\in\mathbb{R}^d$, 
 there exists a sequence $\{x_n\}$ in $S$ such that $x_n\rightarrow x$.  In view of \eqref{prop-proof-molli-eq-1},   for every $\omega\in\Omega^0$, $t\in[0,T]$ and $x_n\in S$, we have
 \begin{align*}
  J_1(t,x_n)(\omega)=J_1(0,x_n)(\omega)+ J_2(t,x_n)(\omega)+ J_3(t,x_n)(\omega)+ J_4(t,x_n)(\omega).
 \end{align*}
and hence  $\tilde{J}_3(t,x_n)=J_3(t,x_n)$  for all $t\in[0,T]$ and $x_n\in S$ on $\Omega_0$.  
By using the Burkholder inequality and Lemma \ref{lem-G-H1}, we have
\begin{align*}
\EE\sup_{t\in[0,T]} \big| J_3(t,x) \big|^2
&=\EE\sup_{t\in[0,T]} \Big|\int_0^{t}\int_B(\Phi(z, u_{\varepsilon}(s-))_{\delta}(x)-u_{\varepsilon,\delta}(s-)(x)\, \tilde{N}(\mathrm{d} s, \mathrm{d} z)\Big|^2\\
&\lesssim \EE \int_0^{T} \int_B|(\Phi(z, u_{\varepsilon}(s))_{\delta}(x)-u_{\varepsilon,\delta}(s)(x)   |^2\nu(\d z)\d s\\
&\lesssim_{K_{\tilde{g}},m} \EE \int_0^{T} \int_B\|  u_{\varepsilon}(s)  \|_{L^2}^2 \|\vartheta_{\delta}\|_{L^2}^2 |z|_{\R^m}^2 \nu(\d z)\d s\\
&\lesssim\,T\EE \|  u_{0}  \|_{L^2}^2  \int_B|z|_{\R^m}^2\| \vartheta_{\delta}\|_{L^2}^2\nu(\d z)<\infty.
\end{align*}
Since the integrand $x\mapsto (\Phi(z, u_{\varepsilon}(s)))_{\delta}(x)-u_{\varepsilon,\delta}(s)(x)$ is continuous uniformly in $s\in[0,T]$, by the Lebesuge Dominated convergence theorem we infer
\begin{align*}
&\lim_{ x_n\rightarrow x}\EE\sup_{t\in[0,T]} \big| J_3(t,x_n)-J_3(t,x)\big|^2\\
&\lesssim \lim_{ x_n\rightarrow x}\EE \int_0^T \int_B \big| \big((\Phi(z, u_{\varepsilon}(s)))_{\delta}(x_n)-u_{\varepsilon,\delta}(s))(x_n)\big) -\big((\Phi(z, u_{\varepsilon}(s))_{\delta}(x)-u_{\varepsilon,\delta}(s)(x)\big)  \big|^2\,\nu(\d z)\d s\\
&=0.
\end{align*}
Then  there exists a subsequence $\{x_{n_k}\}$ of $\{x_n\}$ and a full measure set $\Omega'_x\subset \Omega^0$  (which may depend on $x$) 
such that  on  $\Omega'_x$, for all $t\in[0,T]$,
\begin{align*}
J_3(t,x)&=\lim_{ x_{n_k}\rightarrow x} J_3(t,x_{n_k})\\
&=\lim_{ x_{n_k}\rightarrow x} \big (J_1(t,x_{n_k})-J_1(0,x_{n_k})-J_2(t,x_{n_k}) -J_4(t,x_{n_k})\big)\\
&=J_1(t,x)-J_1(0,x_{n_k})-J_2(t,x) -J_4(t,x)\\
&=\tilde{J}_3(t,x),
\end{align*}
where we also used the continuity of $x\mapsto J_1(t,x), x\mapsto J_2(t,x)$ and $x\mapsto J_4(t,x)$ uniformly in $t\in[0,T]$.  
In other words, $\mathbb{P}\big(\tilde{J}_3(t,x)= J_3(t,x),\,\text{for all }t\in[0,T]\big)=1$, for all $x\in \mathbb{R}^d$.  From now on, when we consider the stochastic term $J_3$, we  only focus on its continuous modification $\tilde{J}_3$ of $J_3$. In this way, \eqref{prop-proof-molli-eq-1} holds for all $x$, $\mathbb{P}$-a.s.

Next we shall apply the  It\^o formula to the function $F_{k}(|u|^2)$ and then  integrate the result over  $\mathbb{R}^d$ to obtain
\begin{align}
& \int_{\R^d} F_k(|u_{\varepsilon, \delta}(t)|^2) \d x\label{proof-prop-F-est-eq3} \\
= & \int_{\R^d} F_k(|(u_0)_{\delta}|^2) d x+\int_0^t\int_{\R^d} F_{k}'[| u_{\varepsilon,\delta}(s)|^2]\big(\operatorname{i}[\Delta u_{\varepsilon,\delta}(s)+2\lambda \big(u_{\varepsilon}(s) L_{\varepsilon}(u_{\varepsilon}(s))\big)_{\delta}]\big)\d x\,\d s\nonumber\\
&+\int_0^t\int_B \int_{\R^d} F_k\big(|(\Phi(z, u_{\varepsilon}(s-)))_{\delta}|^2\big)-F_k\big(|u_{\varepsilon, \delta}(s-)|^2\big) \d x\, \tilde{N}(\mathrm{d} s, \mathrm{d} z)\nonumber\\
&+\int_0^t\int_B \int_{\R^d}  F_k\big(|(\Phi(z, u_{\varepsilon}(s))_{\delta}|^2\big)-F_k(|u_{\varepsilon, \delta}(s)|^2)\nonumber \\
&\quad\quad\quad\quad\quad\quad -2( L_{1 / k}(|u_{\varepsilon, \delta}(s)|^2)+1)\operatorname{Re}\left(\overline{u_{\varepsilon, \delta}(s)} \big((\Phi(z, u_{\varepsilon}(s)))_{\delta}-u_{\varepsilon,\delta}(s)) \big) \right) \d x\, \nu(\d z)\d s\nonumber\\
&+\int_0^t\int_B \int_{\R^d} ( L_{1 / k}(|u_{\varepsilon, \delta}(s)|^2)+1)\\
&\quad\quad\quad\quad\quad\quad\cdot 2\operatorname{Re}\Big(\overline{u_{\varepsilon, \delta}(s)} \big((\Phi(z, u_{\varepsilon}(s)))_{\delta}-u_{\varepsilon,\delta}(s)+\mathrm{i} \sum_{j=1}^m z_j (g_j(u_{\varepsilon}(s))_{\delta}\big)\Big) \d x\, \nu(\d z)\d s,\nonumber
\end{align}
where we also used stochastic Fubini's theorem (c.f. \cite{Zhu+Liu-21}). 
By using \eqref{proof-prop-F-est-eq1} and subsequently applying the integration by parts formula, we have
\begin{align}
\int_{\R^d} F_{k}'[| u_{\varepsilon,\delta}(s)|^2](\operatorname{i}\Delta u_{\varepsilon,\delta}(s))\d x& =\int_{\R^d}  (L_{1/k}(| u_{\varepsilon,\delta}(s)|^2)+1)2\operatorname{Re}(\overline{ u_{\varepsilon,\delta}(s)} \operatorname{i}\Delta u_{\varepsilon,\delta}(s)  )   \d x  \nonumber\\
&=-\int_{\R^d}  (L_{1/k}(| u_{\varepsilon,\delta}(s)|^2)+1)2\operatorname{Re}(\overline{ \nabla u_{\varepsilon,\delta}(s)}\operatorname{i}\nabla  u_{\varepsilon,\delta}(s) )  \d x\nonumber\\
&\quad-\int_{\R^d} f_k(| u_{\varepsilon,\delta}(s)|^2)2\operatorname{Re}(\overline{ u_{\varepsilon,\delta}(s)}\operatorname{i}\nabla  u_{\varepsilon,\delta}(s) )2\operatorname{Re}(\overline{ u_{\varepsilon,\delta}(s)}\nabla  u_{\varepsilon,\delta}(s) ) \d x \nonumber\\
&=    4  \int_{\R^d} f_k(|u_{\varepsilon,\delta}(s)|^2) \operatorname{Im}(\overline{u_{\varepsilon,\delta}(s)} \nabla u_{\varepsilon,\delta}(s)) \operatorname{Re}(\overline{u_{\varepsilon,\delta}(s)} \nabla u_{\varepsilon,\delta}(s)) \d x, \label{proof-prop-F-est-eq2}
\end{align}
where $f_k\left(|u|^2\right):=2(1-k^{-2})(k^{-1}+|u|^2)^{-1}(1+k^{-1}|u|^2)^{-1}$ and we used  the fact that $\operatorname{Re}(\overline{u}(\text{i}v))=-\operatorname{Im}(\bar{u}v)$ and $\operatorname{Re}(\overline{u}(\text{i}u))=0$. 
By substituting \eqref{proof-prop-F-est-eq2} into \eqref{proof-prop-F-est-eq3} and making some cancellations, we obtain
\begin{align*}
& \int_{\R^d} F_k(|u_{\varepsilon, \delta}(t)|^2) \d x \\
=& \int_{\R^d} F_k(|(u_0)_\delta|^2) d x+4 \int_0^t \int_{\R^d} f_k(|u_{\varepsilon, \delta}(s)|^2) \operatorname{Re}\left(\overline{u_{\varepsilon, \delta}(s)} \nabla u_{\varepsilon, \delta}(s)\right) \operatorname{Im}\left(\overline{u_{\varepsilon, \delta}(s)} \nabla u_{\varepsilon, \delta}(s)\right) \d x\, \d s\\
&-4 \lambda \operatorname{Im} \int_0^t \int_{\R^d}\big(L_{1 / k}(|u_{\varepsilon, \delta}(s)|^2)+1\big) \overline{u_{\varepsilon, \delta}(s)}\left(u_{\varepsilon}(s) L_{\varepsilon}\left(u_{\varepsilon}(s)\right)\right)_\delta \d x \,\d s\\
&+\int_0^t\int_B \int_{\R^d} F_k\big(|(\Phi(z, u_{\varepsilon}(s-))_{\delta}|^2\big)-F_k\big(|u_{\varepsilon, \delta}(s-)|^2\big) \d x\, \tilde{N}(\mathrm{d} s, \mathrm{d} z)\\
&+\int_0^t\int_B \int_{\R^d}  F_k\big(|(\Phi(z, u_{\varepsilon}(s)))_{\delta}|^2\big)-F_k\big(|u_{\varepsilon, \delta}(s)|^2\big) \\
&\quad\quad\quad+( L_{1 / k}(|u_{\varepsilon, \delta}(s)|^2)+1)2\operatorname{Re}\Big(\overline{u_{\varepsilon, \delta}(s)} \big(\mathrm{i} \sum_{j=1}^m z_j (g_j(u_{\varepsilon}(s)))_{\delta}\big)\Big) \d x\, \nu(\d z)\d s.
\end{align*}
By using the inequality $|L_{1/k}(v)| \leq \log k$, for all $v>0$, the mean value theorem and Lemma \ref{lem-stoch-lip-1}, we find $\mathbb{P}$-a.s. for all $s\in[0,T]$
\begin{align*}
&\Big|\int_{\R^d}  F_k\big(|(\Phi(z, u_{\varepsilon}(s)))_{\delta}|^2\big)-F_k\big(|u_{\varepsilon, \delta}(s)|^2\big) \d x\Big|\\
&\leq \Big|\int_{\R^d} \int_{|u_{\varepsilon,\delta}(s) |^2}^{ |(\Phi(z, u_{\varepsilon,\delta}(s)))_{\delta}|^2}(L_{1/k}(v)+1)\d v\,\d x\Big|\\
&\leq \int_{\R^d} (\log k+1)\big| |(\Phi(z, u_{\varepsilon}(s)))_{\delta}|^2-   |u_{\varepsilon,\delta}(s) |^2 \big| \d x\\
&\lesssim(\log k+1) \int_{\R^d}  |\operatorname{Re}\big[ \big(\theta (\Phi(z, u_{\varepsilon}(s)))_{\delta}+(1-\theta)u_{\varepsilon,\delta}(s) \big)\overline {( \Phi(z, u_{\varepsilon}(s)))_{\delta} - u_{\varepsilon,\delta}(s)} \big]\big|\d x\\
&\lesssim (\log k+1)\big( \| (\Phi(z, u_{\varepsilon}(s)))_{\delta}\|_{L^2}+  \|u_{\varepsilon,\delta}(s) \|_{L^2}\big)\|( \Phi(z, u_{\varepsilon}(s)))_{\delta} - u_{\varepsilon,\delta}(s)\|_{L^2}\\
&\lesssim_{m,K_{\tilde{g}}} (\log k+1)\big( \| \Phi(z, u_{\varepsilon}(s))\|_{L^2}+  \|u_{\varepsilon}(s) \|_{L^2}\big)|z|_{\R^m}\|u_{\varepsilon,\delta}(s) \|_{L^2}\\
&\lesssim(\log k+1) \|u_{\varepsilon}(s) \|_{L^2}^2|z|_{\R^m}\\
&=(\log k+1) \|u_{0} \|_{L^2}^2|z|_{\R^m},
\end{align*}
where we also used the fact that $| \Phi(z, y)|=|y|$ for all $z\in B$ and $y\in\mathbb{C}$ (see  \cite[Lemma 3.2]{BLZ}). 
Similarly, we have
\begin{align*}
&\Big|\int_{\R^d}  F_k\big(|(\Phi(z, u_{\varepsilon}(s)))_{\delta}|^2\big)-F_k\big(|u_{\varepsilon, \delta}(s)|^2\big) +( L_{1 / k}(|u_{\varepsilon, \delta}(s)|^2)+1)2\operatorname{Re}\Big(\overline{u_{\varepsilon, \delta}(s)} \big(\mathrm{i} \sum_{j=1}^m z_j (g_j(u_{\varepsilon}(s)))_{\delta}\big)\Big) \d x\Big|\\
&\lesssim_{m,K_{\tilde{g}}}(\log k+1) \|u_{0} \|_{L^2}^2|z|_{\R^m}^2.
\end{align*}
Since $g_{\delta}\rightarrow g$ in $L^2(\R^d)$, for any $g\in L^2(\R^d)$,  by using the above observations, the uniform boundedness \eqref{uni-bound-H1-eq} of $u_{\varepsilon}$, the It\^o isometry property of the stochastic integral, and the Lebesgue Dominated Convergence Theorem, we get as $\delta\rightarrow 0$ (passing to a subsequence if necessary)
\begin{align*}
&\int_0^t\int_B \int_{\R^d} F_k\big(|(\Phi(z, u_{\varepsilon}(s-)))_{\delta}|^2\big)-F_k\big(|u_{\varepsilon, \delta}(s-)|^2\big) \d x\, \tilde{N}(\mathrm{d} s, \mathrm{d} z)\\
&\longrightarrow \int_0^t\int_B \int_{\R^d} F_k\big(|\Phi(z, u_{\varepsilon}(s-))|^2\big)-F_k\big(|u_{\varepsilon}(s-)|^2\big) \d x\, \tilde{N}(\mathrm{d} s, \mathrm{d} z)\quad \mathbb{P}\text{-a.s.}\\
&\int_0^t\int_B \int_{\R^d}  F_k\big(|(\Phi(z, u_{\varepsilon}(s)))_{\delta}|^2\big)-F_k\big(|u_{\varepsilon, \delta}(s)|^2\big)\\
&\quad\quad\quad +( L_{1 / k}(|u_{\varepsilon, \delta}(s)|^2)+1)2\operatorname{Re}\Big(\overline{u_{\varepsilon, \delta}(s)} \big(\mathrm{i} \sum_{j=1}^m z_j (g_j(u_{\varepsilon}(s)))_{\delta}\big)\Big) \d x\, \nu(\d z)\d s\\
&\longrightarrow \int_0^t\int_B \int_{\R^d}  F_k\big(|\Phi(z, u_{\varepsilon}(s))|^2\big)-F_k\big(|u_{\varepsilon}(s)|^2\big) \\
&\quad\quad\quad+( L_{1 / k}(|u_{\varepsilon}(s)|^2)+1)2\operatorname{Re}\Big(\overline{u_{\varepsilon}(s)} \big(\mathrm{i} \sum_{j=1}^m z_j g_j(u_{\varepsilon}(s))\big)\Big) \d x\, \nu(\d z)\d s\;\mathbb{P}\text{-a.s.}
\end{align*}
Therefore, using these limiting results, we obtain, $\mathbb{P}$-a.s.
\begin{align*}
 & \int_{\R^d} F_k(|u_{\varepsilon}(t)|^2) \d x \\
=& \int_{\R^d} F_k(|u_0|^2) \d x+4 \int_0^t \int_{\R^d} f_k(|u_{\varepsilon}(s)|^2) \operatorname{Re}\big(\overline{u_{\varepsilon}(s)} \nabla u_{\varepsilon}(s)\big) \operatorname{Im}\big(\overline{u_{\varepsilon}(s)} \nabla u_{\varepsilon}(s)\big) \,\d x\d s\\
&-4 \lambda \operatorname{Im} \int_0^t \int_{\R^d}\big(L_{1 / k}(|u_{\varepsilon}(s)|^2)+1\big) \overline{u_{\varepsilon}(s)}(u_{\varepsilon}(s) L_{\varepsilon}(u_{\varepsilon}(s)))\, \d x \d s\\
&+\int_0^t\int_B \int_{\R^d} F_k\big(|\Phi(z, u_{\varepsilon}(s-)|^2\big)-F_k\big(|u_{\varepsilon}(s-)|^2\big) \d x\, \tilde{N}(\mathrm{d} s, \mathrm{d} z)\\
&+\int_0^t\int_B \int_{\R^d}  F_k\big(|\Phi(z, u_{\varepsilon}(s))|^2\big)-F_k\big(|u_{\varepsilon}(s)|^2\big) \\
&\quad\quad\quad+( L_{1 / k}(|u_{\varepsilon}(s)|^2)+1)2\operatorname{Re}\Big(\overline{u_{\varepsilon}(s)} \big(\mathrm{i} \sum_{j=1}^m z_j (g_j(u_{\varepsilon}(s)))\big)\Big) \d x\, \nu(\d z)\d s.
\end{align*}
Recall that $g_j(u)=\tilde{g}_j(|u|^2)u$. Since $\operatorname{Im}(\overline{v}v)= \operatorname{Re}(\overline{v}\text{i}v)=0$ and $|\Phi(z, y)|= |y|$, for all $z\in B$ and $y\in\mathbb{C}$ (see  \cite[Lemma 3.2]{BLZ}), the above formula can be simplified to be $\mathbb{P}$-a.s. for all $t\in[0,T]$,
\begin{align*}
 \int_{\R^d} F_k(|u_{\varepsilon}(t)|^2) \d x =  \int_{\R^d} F_k(|u_0|^2) \d x+4 \int_0^t \int_{\R^d} f_k(|u_{\varepsilon}(s)|^2) \operatorname{Re}(\overline{u_{\varepsilon}(s)} \nabla u_{\varepsilon}(s)) \operatorname{Im}(\overline{u_{\varepsilon}(s)} \nabla u_{\varepsilon}(s)) \d x\, \d s.
\end{align*}
Observe that for $u>0$,
\begin{align*}
F_k(u)=u L_{1 / k}(u)+u-\left(1-1/k^2\right) \int_0^u \frac{s}{(1/k+s)(1+s/k)} \d s.
\end{align*}
Since $ u_0\in L^p(\Omega;W)$,   we infer
\begin{align*}
\EE \sup_{k\in \mathbb{N}} \Big| \int_{\mathbb{R}^d}|F_k(|u_0|^2)| \d x\Big|^p &\leq \EE \sup_{k\in \mathbb{N}} \Big| \int_{\mathbb{R}^d} \big| | u_0|^2 L_{1 / k}(|u_0|^2)\big|+2| u_0|^2 \d x\Big|^p\\
&\leq \EE  \Big| \int_{\mathbb{R}^d}  \big| | u_0|^2 \log |u_0|^2\big|+2| u_0|^2 \d x \Big|^p <\infty .
\end{align*}
Since $F_k(|y|^2) \rightarrow|y|^2 \log|y|^2$ for any $y\in \R$, as $m \rightarrow \infty$,  it follows from the Lebesgue dominated convergence theorem that as $k\rightarrow \infty$, 
\begin{align*}
\int_{\R^d} F_k(|u_0|^2) \d x \rightarrow \int_{\R^d} |u_0|^2 \log|u_0|^2 \d x.
\end{align*}
Using the fact that $f_k(|u_{\varepsilon}|^2) \leq 2\left|u_{\varepsilon}\right|^{-2}$ and the uniform boundedness \eqref{uni-bound-H1-eq} of $H^1$-norm, we derive that
\begin{align*}
&\EE\sup_{t\in[0,T]} \Big| \int_{\R^d} F_k(|u_{\varepsilon}(t)|^2) \d x\Big|^p\\
&\lesssim_p \Big|  \int_{\R^d} F_k(|u_0|^2) \d x \Big| ^p+ \EE\sup_{t\in[0,T]} \Big|  \int_0^t \int_{\R^d} f_k(|u_{\varepsilon}(s)|^2) \operatorname{Re}\big(\overline{u_{\varepsilon}(s)} \nabla u_{\varepsilon}(s)\big) \operatorname{Im}\big(\overline{u_{\varepsilon}(s)} \nabla u_{\varepsilon}(s)\big) \d x\, \d s \Big| ^p\\
&\lesssim \Big|  \int_{\R^d}   \big (\big| | u_0|^2 \log|u_0|^2\big|+2| u_0|^2\big) \d x \Big| ^p+\EE\sup_{t\in[0,T]} \Big( \int_0^t \|u_{\varepsilon}(s) \|_{H^1}^2 \d s\Big)^{p}\\
&\leq C(u_0,\lambda,m,T,p,K_{\tilde{g}})<\infty,
\end{align*}
where the constant is independent of $k$ and $\varepsilon$. 
Hence, by Fatou's lemma, we obtain
\begin{align*}
\EE\sup_{t\in[0,T]}\Big| \int_{\R^d} F(|u_{\varepsilon}(t)|) \d x\Big|^p \leq C(u_0,\lambda,m,T,p,K_{\tilde{g}}).
\end{align*}
Using the fact that $ v^2 \log v^2 \leq C(\delta)(v^2+v^{2+\delta})$ for $v>1$, we derive
\begin{align*}
& \EE\sup_{t\in[0,T]}\Big|\int_{\mathbb{R}^d} \big| | u_{\varepsilon}(t)|^2 \log |u_{\varepsilon}(t)|^2\big| \d x\Big|^p \\
&=\EE\sup_{t\in[0,T]}\Big| -\int_{\mathbb{R}^d}  | u_{\varepsilon}(t)|^2 \log |u_{\varepsilon}(t)|^2\d x+2 \int_{\{|u_{\varepsilon}| > 1\}}| u_{\varepsilon}(t)|^2 \log |u_{\varepsilon}(t)|^2 \d x\Big|^p\\
&\lesssim_p \EE\sup_{t\in[0,T]}\Big| \int_{\mathbb{R}^d} | u_{\varepsilon}(t)|^2 \log|u_{\varepsilon}(t)|^2\d x\Big|^p  + C( \delta)\EE\sup_{t\in[0,T]}\Big(\|u_{\varepsilon}(t)\|_{L^2}^{2 p}+\|u_{\varepsilon}(t)\|_{L^{2+\delta}}^{(2+\delta) p}\Big)\\
&\leq C(u_0,\lambda,m,T,p,K_{\tilde{g}})+ C( \delta)\EE\sup_{t\in[0,T]}\Big(\|u_{\varepsilon}(t)\|_{L^2}^{2 p}+\|u_{\varepsilon}(t)\|_{H^1}^{(2+\delta) p}\Big)\\
&\leq C(u_0,\lambda,m,T,p,K_{\tilde{g}}).
\end{align*}
where we used the embedding $H^1(\R^d) \hookrightarrow  L^{2+\delta}(\R^d) $, for $0<\delta<\frac{4}{d-2}$.



\end{proof}


\section{Proof of Theorem \ref{Them-main}}\label{sec-proof-mian}

 The primary approach to the proof of Theorem \ref{Them-main} is structured as follows.  
Using the above uniform estimates of the approximate solutions, we can deduce the existence of a limit function for a subsequence of the approximate solutions in the weak (or weak$^*$) topology. To verify that the limit function satisfies the original equation, we will strengthen the convergence of the approximate solutions by  proving that the solutions of the approximation equation \eqref{intro-app-SlogSE} form a Cauchy sequence in the Banach space $L^2(\Omega ; L^{\infty}([0, T] ; L^2_{\text{loc}}(\R^d)))$. Using the uniformly estimate of the  entropy function in the Orlicz space we obtain the limit solution is in the energy space $W$. Our proof is independence of the conservation of the energy and the sign of $\lambda$. 


\begin{proof} The proof is divided into six steps. 

\textbf{Step 1 } We claim that $\left\{u_{\varepsilon}\right\}_{ 0<\varepsilon<1}$ forms a Cauchy sequence in $L^2(\Omega ; L^{\infty}(0, T ; L^2_{\text{loc}}(\R^d)))$.

Let ${\zeta} \in C_c^{\infty}\left(\mathbb{R}^d\right)$ be a mollifier 
 satisfying
\begin{align*}
{\zeta}(x)=\left\{\begin{array}{ll}
1 & \text { if }|x| \leq 1, \\
0 & \text { if }|x| \geq 2,
\end{array} \quad 0 \leq {\zeta}(x) \leq 1 \quad \text { for all } x \in \mathbb{R}^d .\right.
\end{align*}
 For $R>0$ we set $\zeta_R:=\zeta(x / R)$. 
Let $\varepsilon, \mu \in(0,1)$. Let $u_{\varepsilon}$ and $u_{\mu}$ be solutions of It\^o formulation to \eqref{app-SlogSE-marcus} with respect to $\varepsilon$ and $\mu$ respectively. 
 By subtracting the equation for $u_{\varepsilon}$ from the equation  for $u_{\mu}$, we obtain in $H^{-2}(\R^d)$
\begin{align*}
& u_{\varepsilon}(t)-u_{\mu} (t)
=\int_0^t \mathrm{i} \Delta\big(u_{\varepsilon}(s)-u_{\mu}(s)\big) \d s+\mathrm{i}2 \lambda\big(u_{\varepsilon}(s) L_{\varepsilon}(u_{\varepsilon}(s)) - u_{\mu}(s) L_{\mu}(u_{\mu}(s))\big) \d s\\
&+\int_0^t\int_B\big[\Phi(z, u_{\varepsilon}(s-))-\Phi(z, u_{\mu}(s-))-\big(u_{\varepsilon}(s-)-u_{\mu}(s-)\big)\big] \tilde{N}(\mathrm{d} s, \mathrm{d} z) \\
& +\int_0^t\int_B\Big[\Phi(z, u_{\varepsilon}(s))-\Phi(z, u_{\mu}(s))-\big(u_{\varepsilon}(s)-u_{\mu}(s)\big)+\mathrm{i} \sum_{j=1}^m z_j \big(g_j(u_{\varepsilon}(t))- g_j(u_{\mu}(t))\big)\Big] \nu(\mathrm{d} z) \mathrm{d} s.
\end{align*}
Applying the It\^o formula to $\left\|\zeta_R(u_{\varepsilon}(t)-u_{\mu}(t))\right\|_{L^2}^2$, we have
\begin{align*}
& \|\zeta_R(u_{\varepsilon}(t)-u_{\mu}(t))\|_{L^2}^2 =\int_0^t2 \langle \mathrm{i} \nabla(\zeta_R^2) \nabla\left(u_{\varepsilon}(s)-u_\mu(s)\right), u_{\varepsilon}(s)-u_\mu(s)\rangle_{L^2}\d s\\
&+\int_0^t 2\left\langle   \mathrm{i}2 \lambda\zeta_R^2\big(u_{\varepsilon}(s) L_{\varepsilon}(u_{\varepsilon}(s)) - u_{\mu}(s) L_{\mu}(u_{\mu}(s))\big),u_{\varepsilon}(s)-u_{\mu}(s) \right\rangle_{L^2} d s\\
&+\int_0^t\int_B \big\| \zeta_R \big(\Phi(z, u_{\varepsilon}(s-)) - \Phi(z, u_{\mu}(s-))\big)\big\|_{L^2}^2- \big\| \zeta_R\big(u_{\varepsilon}(s-)-u_{\mu}(s-)\big)\big\|_{L^2}^2   \tilde{N}(\mathrm{d} s, \mathrm{d} z) \\
&+\int_0^t\int_B \big\| \zeta_R\big(\Phi(z, u_{\varepsilon}(s)) - \Phi(z, u_{\mu}(s))\big)\big\|_{L^2}^2- \big\| \zeta_R\big(u_{\varepsilon}(s)-u_{\mu}(s)\big)\big\|_{L^2}^2 \\
&\quad \quad\quad-2 \langle u_{\varepsilon}(s)-u_{\mu}(s),\zeta_R^2\big(\Phi(z, u_{\varepsilon}(s))-\Phi(z, u_{\mu}(s))-\big(u_{\varepsilon}(s)-u_{\mu}(s)\big)\big)\rangle_{L^2}  \nu(\d z)\d s\\
&+\int_0^t\int_B 2\big\langle u_{\varepsilon}(s)-u_{\mu}(s),\zeta_R^2\big(\Phi(z, u_{\varepsilon}(s))-\Phi(z, u_{\mu}(s))-\big(u_{\varepsilon}(s)-u_{\mu}(s)\big)\big)\rangle_{L^2}\\
&\quad \quad\quad+2\big\langle u_{\varepsilon}(s)-u_{\mu}(s), \mathrm{i} \sum_{j=1}^m z_j \zeta_R^2 \big(g_j(u_{\varepsilon}(t))- g_j(u_{\mu}(t))\big) \big\rangle_{L^2} \nu(\d z)\d s.
\end{align*}
Taking inequality \eqref{L-var-est-2} into account, we infer for $\delta\in(0,1)$,
\begin{align*}
&\big\langle \mathrm{i}2 \lambda \zeta_R^2\big(u_{\varepsilon}(s) L_{\varepsilon}(u_{\varepsilon}(s)) - u_{\mu}(s) L_{\mu}(u_{\mu}(s))\big),u_{\varepsilon}(s)-u_{\mu}(s) \big\rangle_{L^2}\\
&=-2\lambda \int_{\R^d} \operatorname{Im} \zeta_R^2\big(u_{\varepsilon}(s) L_{\varepsilon}(u_{\varepsilon}(s)) - u_{\mu}(s) L_{\mu}(u_{\mu}(s))\big)\overline{ u_{\varepsilon}(s)-u_{\mu}(s) } \d x\\
&\lesssim (1-\varepsilon^2)|\lambda|\int_{\R^d} |\zeta_R(u_{\varepsilon}(s)-u_{\mu}(s))|^2\d x+|\lambda|  |\varepsilon-\mu|\int_{\R^d} |\zeta_R^2(u_{\varepsilon}(s)-u_{\mu}(s))| \d x\\
&\quad+C(\delta)|\lambda|  |\varepsilon-\mu|^{\delta}\int_{\R^d} |\zeta_R^2(u_{\varepsilon}(s)-u_{\mu}(s))||u_{\mu}(s) |^{1+\delta} \d x\\
&\leq (1-\varepsilon^2)|\lambda|  \|\zeta_R(u_{\varepsilon}-u_{\mu} )\|_{L^2}^2+|\lambda| |\varepsilon-\mu|  \|\zeta_R^2( u_{\varepsilon}-u_{\mu})\|_{L_1}\\
&\quad+C(\delta)|\lambda|  |\varepsilon-\mu|^{\delta}\|\zeta_R^2(u_{\varepsilon}(s)-u_{\mu}(s))\|_{L^2}\|u_{\mu}(s) \|^{1+\delta}_{L^{2+2\delta}}.
\end{align*}
Substituting the above estimate yields
\begin{align}
& \|\zeta_R(u_{\varepsilon}(t)-u_{\mu}(t))\|^2\nonumber\\
  &\lesssim \frac{1}{R}\int_0^t\left\|\nabla\left(u_{\varepsilon}(s)-u_{\mu}(s)\right)\right\|_{L^2}\left\|u_{\varepsilon}(s)-u_{\mu}(s)\right\|_{L^2}\d s+ (1-\varepsilon^2)\int_0^t |\lambda|  \|\zeta_R(u_{\varepsilon}(s)-u_{\mu} (s))\|_{L^2}^2\d s\nonumber\\
&+|\lambda| |\varepsilon-\mu| \int_0^t  \|\zeta_R^2( u_{\varepsilon}(s)-u_{\mu}(s)\|_{L_1}\d s\nonumber\\
&+C(\delta)|\lambda|  |\varepsilon-\mu|^{\delta}\int_0^t \|\zeta_R^2(u_{\varepsilon}(s)-u_{\mu}(s))\|_{L^2}\|u_{\mu}(s) \|^{1+\delta}_{L^{2+2\delta}}\d s\nonumber\\
&+\Big| \int_0^t\int_B \big\| \zeta_R \big(\Phi(z, u_{\varepsilon}(s-)) - \Phi(z, u_{\mu}(s-))\big)\big\|_{L^2}^2- \big\| \zeta_R\big(u_{\varepsilon}(s-)-u_{\mu}(s-)\big)\big\|_{L^2}^2   \tilde{N}(\mathrm{d} s, \mathrm{d} z)\Big|\nonumber\\
&+\Big| \int_0^t\int_B  \big\| \zeta_R \big(\Phi(z, u_{\varepsilon}(s)) - \Phi(z, u_{\mu}(s))\big)\big\|_{L^2}^2- \big\| \zeta_R\big(u_{\varepsilon}(s)-u_{\mu}(s)\big)\big\|_{L^2}^2 \nonumber\\
&\quad \quad\quad+2\big\langle u_{\varepsilon}(s)-u_{\mu}(s), \mathrm{i} \sum_{j=1}^m z_j \zeta_R^2\big(g_j(u_{\varepsilon}(t))- g_j(u_{\mu}(t))\big) \big\rangle_{L^2} \nu(\d z)\d s\Big| .\label{proof-mian-est-cau-eq11}
\end{align}
Next we will deal with each of the last two terms on the right hand side of \eqref{proof-mian-est-cau-eq11}  individually.
Observe that 
\begin{align*}
&\left|\Phi(s, z, y_1)-\Phi(s, z, y_2)\right|^2 - |y_1-y_2|^2\\
&\lesssim \big(|\Phi(s, z, y_1)-\Phi(s, z, y_2) |+|y_1-y_2|\big) \big| | \Phi(s, z, y_1)-\Phi(s, z, y_2)|-|y_1-y_2| \big|\\
&\lesssim_m |y_1-y_2|^2|z|_{\mathbb{R}^m},
\end{align*}
where we used \eqref{Phi-est-Lip} and \eqref{G-est-Lip-1} in the last inequality. 
Employing the Burkholder inequality and the above estimate, we derive that
\begin{align}
&\EE\sup_{t\in[0,T]}\Big| \int_0^t\int_B \big\| \zeta_R\big( \Phi(z, u_{\varepsilon}(s-)) - \Phi(z, u_{\mu}(s-)\big)\big\|_{L^2}^2- \big\|\zeta_R\big( u_{\varepsilon}(s-)-u_{\mu}(s-)\big)\big\|_{L^2}^2   \tilde{N}(\mathrm{d} s, \mathrm{d} z)\Big| \nonumber \\
&\leq \EE\Big(\int_0^T\int_B  \Big|\big\| \zeta_R\big(\Phi(z, u_{\varepsilon}(s)) - \Phi(z, u_{\mu}(s)\big)\big\|_{L^2}^2- \big\|\zeta_R\big( u_{\varepsilon}(s)-u_{\mu}(s))\big\|_{L^2}^2\Big|^2\;\nu(\d z)\d s\Big)^{\frac12}\nonumber\\
&\lesssim_m  \EE\Big(\int_0^T\int_B  \big\|\zeta_R\big( u_{\varepsilon}(s)-u_{\mu}(s)\big)\big\|_{L^2}^4|z|^2_{\R^m}\;\nu(\d z)\d s\Big)^{\frac12}\nonumber\\
&\lesssim  \EE\sup_{t\in[0,T]} \big\| \zeta_R\big( u_{\varepsilon}(t)-u_{\mu}(t)\big)\big\|_{L^2} \Big(\int_0^T  \big\|\zeta_R\big( u_{\varepsilon}(s)-u_{\mu}(s)\big)\big\|_{L^2}^2 \int_B |z|^2_{\R^m} \nu(\d z)\d s\Big)^{\frac12}\nonumber\\
&\leq \frac12 \EE\sup_{t\in[0,T]} \big\| \zeta_R\big( u_{\varepsilon}(t)-u_{\mu}(t)\big)\big\|_{L^2}^2+C(m) \int_0^T \EE\sup_{t\in[0,s]}\big\|\zeta_R\big( u_{\varepsilon}(t)-u_{\mu}(t)\big)\big\|_{L^2}^2\d s.\label{proof-main-eq101}
\end{align}

Let us fix $z \in \mathbb{R}^m$ and $y_1,y_2 \in \mathbb{C}$. Then straightforward calculations using the definition of $\Phi$ yields 
\begin{align}
&\frac{\partial}{\partial \theta}| \Phi(\theta, z, y_1)-\Phi(\theta, z, y_2)|^2\nonumber \\
&=2\operatorname{Re}\Big[\frac{\partial}{\partial \theta} \big(\Phi(\theta, z, y_1)-\Phi(\theta, z, y_2)\big)\overline{ \Phi(\theta, z, y_1)-\Phi(\theta, z, y_2)}\Big]\nonumber \\
& =-2\operatorname{Re}\Big[ \Big(\mathrm{i} \sum_{j=1}^m z_j g_j(\Phi(\theta, z, y_1))-\mathrm{i} \sum_{j=1}^m z_j g_j(\Phi(\theta, z, y_2))\Big)\overline{ \Phi(\theta, z, y_1)- \Phi(\theta, z, y_2)}\Big] .\label{proof-main-est-phi1}
\end{align}
By Condition \eqref{Ass-g-boundedness-1} and \eqref{id-g-deri-j-k}, we infer
\begin{align}
|g_j(x)-g_j(y)|  &\lesssim_{K_{\tilde{g}}}|x-y|,\label{g-Lip-est-1}\\
\big| (\mathrm{i} g_j)'[x](\mathrm{i}g_k(x))- (\mathrm{i} g_j)'[y](\mathrm{i}g_k(y))\big|  & \lesssim_{K_{\tilde{g}}} |x-y|.\label{gj-gk-est-lip}
\end{align}
We employ \eqref{Phi-est-Lip} and  \eqref{gj-gk-est-lip} to get
\begin{align}
&\Big|\int_0^{\theta}\sum_{j=1}^m z_j  \frac{\partial}{\partial a} \big(\mathrm{i}g_j(\Phi(a, z, y_1))-\mathrm{i}g_j(\Phi(a, z, y_2)\big)\d a\Big|\nonumber\\
&=\Big|\int_0^{\theta}\sum_{j=1}^m z_j \frac{\mathrm{d}(\mathrm{i} g_j)}{\mathrm{d} \Phi}(\Phi(a, z, y_1))\Big(-\mathrm{i} \sum_{k=1}^m z_k\big( g_k(\Phi(a, z, y_1))\Big)\nonumber\\
&\quad\quad-\sum_{j=1}^m z_j\frac{\mathrm{d}(\mathrm{i} g_j)}{\mathrm{d} \Phi}(\Phi(a, z, y_2)) \Big(-\mathrm{i} \sum_{k=1}^m z_k g_k(\Phi(a, z, y_2))\Big) \d a\Big|\nonumber\\
& \leq|z|_{\mathbb{R}^m}^2 \int_0^1 \int_0^\theta\Big(\sum_{j=1}^m \sum_{k=1}^m \Big\lvert\, \frac{\mathrm{d}(\mathrm{i} g_j)}{\mathrm{d} \Phi}(\Phi(a, z, y_1))(-\mathrm{i})   g_k(\Phi(a, z, y_1))\nonumber\\
&\quad\quad\quad\quad-\frac{\mathrm{d}(\mathrm{i} g_j)}{\mathrm{d} \Phi}(\Phi(a, z, y_2))(-\mathrm{i})  g_k(\Phi(a, z, y_2)))\Big|^2 \Big)^{\frac{1}{2}} \mathrm{~d} a\nonumber\\
& \leq |z|_{\mathbb{R}^m}^2 \int_0^\theta\Big(\sum_{j=1}^m \sum_{k=1}^m\left|\Phi\left(a, z, y_1\right)-\Phi\left(a, z, y_2\right)\right|^2\Big)^{\frac{1}{2}} \mathrm{~d} a \nonumber\\
& \lesssim_{m,K_{\tilde{g}}}|z|_{\mathbb{R}^m}^2|y_1-y_2|.\label{proof-mian-est-phi-g-1}
\end{align}
Taking account the identity \eqref{proof-main-est-phi1} and  estimates \eqref{g-Lip-est-1},   \eqref{Phi-est-Lip}, \eqref{G-est-Lip-1} and \eqref{proof-mian-est-phi-g-1}, we deduce that
\begin{align*}
&\Big||\Phi(z,y_1)-\Phi(z,y_2)|^2-|y_1-y_2|^2+2\mathrm{Re}(\overline{y_1-y_2})\big(\mathrm{i} \sum_{j=1}^mz_j(g_j(y_1)-g_j(y_2))\big)\Big|\\
&=\Big|\int_0^1 \frac{\partial }{\partial \theta} \big(|\Phi(\theta, z, y_1)-\Phi(\theta, z, y_2)|^2\big) +2\mathrm{Re}(\overline{y_1-y_2})\big( \mathrm{i}\sum_{j=1}^mz_j(g_j(y_1)-g_j(y_2))\big) \d \theta\Big|\\
&=\Big|\int_0^1 -2\sum_{j=1}^m z_j \operatorname{Re}\overline{(\Phi(\theta, z, y_1)- \Phi(\theta, z, y_2))}\big(\mathrm{i}g_j(\Phi(\theta, z, y_1))-\mathrm{i}g_j(\Phi(\theta, z, y_2))\big) \\
&\quad\quad +2\sum_{j=1}^mz_j\mathrm{Re}(\overline{y_1-y_2})\big( (\mathrm{i}g_j(y_1)-\mathrm{i}g_j(y_2))\big) \d \theta\Big|\\
&\lesssim \int_0^1  m^{\frac12}|z|_{\R^m}\big| (\Phi(\theta, z, y_1)- \Phi(\theta, z, y_2)-(y_1-y_2))\big| \big|g_j(\Phi(\theta, z, y_1))-g_j(\Phi(\theta, z, y_2)) \big| \d \theta\\
&\quad+\int_0^1 \Big| \sum_{j=1}^m z_j \operatorname{Re}\overline{(y_1-y_2)}\big(\int_0^{\theta} \frac{\partial}{\partial a} \big(\mathrm{i}g_j(\Phi(a, z, y_1))-\mathrm{i}g_j(\Phi(a, z, y_2)\big)\d a\big)\Big|  \d \theta\\
&\lesssim_{m,K_{\tilde{g}}} |z|^2_{\R^m}|y_1-y_2|^2.
\end{align*}
Based on the above estimation, for the last term on the right hand side of \eqref{proof-mian-est-cau-eq11} we derive
\begin{align}
&\EE \sup_{0\leq t\leq T}\Big|\int_0^t\int_B \big\| \zeta_R\big( \Phi(z, u_{\varepsilon}(s)) - \Phi(z, u_{\mu}(s)\big)\big\|_{L^2}^2- \big\| \zeta_R\big( u_{\varepsilon}(s)-u_{\mu}(s)\big)\big\|_{L^2}^2\nonumber \\
&\quad \quad\quad+2\big\langle u_{\varepsilon}(s)-u_{\mu}(s), \mathrm{i} \sum_{j=1}^m z_j \zeta_R^2\big(g_j(u_{\varepsilon}(s))- g_j(u_{\mu}(s))\big) \big\rangle_{L^2} \nu(\d z)\d s\Big|\nonumber\\
&\lesssim_{m,K_{\tilde{g}}} \EE \int_0^T \int_B  \big\|\zeta_R\big( u_{\varepsilon}(s)-u_{\mu}(s)\big)\big\|_{L^2}^2 |z|^2_{\R^m}  \nu(\d z)\d s.\label{proof-main-eq102}
\end{align}
Applying the embedding Gagliardo-Nirenberg interpolation inequality, 
\begin{align*}
\|u\|_{L^q} \leq C\|u\|^{1-\gamma}\|\nabla u\|^\gamma ,
\end{align*}
with $\frac{1}{q}=\frac{1}{2}-\frac{\gamma}{d}$ and $\gamma \in[0,1]$, shows
\begin{align}\label{proof-main-eq103}
\|u\|_{L^{2+2\delta}} \leq C\|u\|_{L^2}^{1-\frac{\delta d}{2+2\delta}}\|\nabla u\|_{L^2}^{\frac{\delta d}{2+2\delta}},
\end{align}
where $0<\delta\leq \frac{2}{d-2}$. 
By the mass conservation, we find
\begin{align}\label{proof-main-eq104}
\left\|\zeta_R^2\left(u_{\varepsilon}(s)-u_\mu(s)\right)\right\|_{L^1} \leq\|u_{\varepsilon}(s)-u_\mu(s)\|_{L^2(B_{2 R})}|B_{2 R}|^{1 / 2} \leq 2|B_{2 R}|^{1 / 2}\|u_0\|_{L^2} .
\end{align}
For $p\geq 2$, denote $M_T^p:=\sup_{\kappa\in(0,1]}\EE\sup_{t\in[0,T]} \|u_{\kappa}(t)\|_{H^1}^p<\infty$ by \eqref{uni-bound-H1-eq}. 
Therefore, combining \eqref{proof-mian-est-cau-eq11}, \eqref{proof-main-eq101}, \eqref{proof-main-eq102}, \eqref{proof-main-eq103} and \eqref{proof-main-eq104} yields
\begin{align*}
& \EE\sup_{t\in[0,T]}\left\|\zeta_R\big( u_{\varepsilon}(t)-u_{\mu}(t)\big)\right\|^2_{L^2} \\
&\leq
\frac{C}{R}T\Big(M_T^2\EE\|u_0\|^2_{L^2}\Big)^{\frac{1}{2}}+C (1-\varepsilon^2)|\lambda|\int_0^T \EE\sup_{r\in[0,s]} \|\zeta_R\big(u_{\varepsilon}(r)-u_{\mu}(r)\big) \|_{L^2}^2\d s\\
&+2TC|\lambda| |\varepsilon-\mu| |B_{2R}|^{\frac12}\EE\|u_0\|_{L^2}\\
&+C(\delta)|\lambda|  |\varepsilon-\mu|^{\delta}\EE\sup_{t\in[0,T]}\int_0^t \|\zeta_R(u_{\varepsilon}(s)-u_{\mu}(s))\|_{L^2}\|u_{\varepsilon}(s)\|_{L^2}^{(1-\frac{\delta d}{2+2\delta})(1+\delta)}\|\nabla u_{\varepsilon}(s)\|_{L^2}^{\frac{\delta d}{2}}\,\d s\\
&+ \frac12 \EE\sup_{t\in[0,T]} \big\| \zeta_R\big( u_{\varepsilon}(t)-u_{\mu}(t)\big)\big\|_{L^2}^2+C(m) \int_0^T \EE\sup_{r\in[0,s]}\big\|\zeta_R\big( u_{\varepsilon}(r)-u_{\mu}(r)\big)\big\|_{L^2}^2\d s\\
&+C(m,K_{\tilde{g}} )\,\EE \int_0^T \int_B  \big\|\zeta_R\big( u_{\varepsilon}(s)-u_{\mu}(s)\big)\big\|_{L^2}^2 |z|^2_{\R^m}  \nu(\d z)\d s,
\end{align*}
hence
\begin{align*}
 \EE\sup_{t\in[0,T]}\left\|\zeta_R\big( u_{\varepsilon}(t)-u_{\mu}(t)\big)\right\|_{L^2}^2 
&\leq \frac{C}{R}T\Big(M_T^2\EE\|u_0\|^2_{L^2}\Big)^{\frac{1}{2}}+C (1-\varepsilon^2)|\lambda|\int_0^T \EE\sup_{t\in[0,s]} \|\zeta_R\big(u_{\varepsilon}(t)-u_{\mu}(t)\big) \|_{L^2}^2\d s\\
&+2CT|\lambda| |\varepsilon-\mu| |B_{2R}|^{\frac12}\EE\|u_0\|_{L^2}\\
&+C(\delta)|\lambda|  |\varepsilon-\mu|^\delta TM_T^{{2+\delta}}\\
&+C(m)  \int_0^T \EE\sup_{r\in[0,s]}\big\|\zeta_R\big( u_{\varepsilon}(r)-u_{\mu}(r)\big)\big\|_{L^2}^2\d s\\\
&+C(m,K_{\tilde{g}} )  \int_0^T \EE\sup_{r\in[0,s]}\big\|\zeta_R\big( u_{\varepsilon}(r)-u_{\mu}(r)\big)\big\|_{L^2}^2\d s.
\end{align*}
By Gronwall's Lemma, we infer
\begin{align*}
&\EE\sup_{t\in[0,T]}\left\|\zeta_R\big( u_{\varepsilon}(t)-u_{\mu}(t)\big)\right\|_{L^2}^2 \\
 &\lesssim_{|\lambda|,m,K_{\tilde{g}},T} \frac{1}{R}\Big(M_T^2\EE\|u_0\|^2_{L^2}\Big)^{\frac{1}{2}}+ |\mu-\varepsilon| |B_{2R}|^{\frac12}\EE\|u_0\|_{L^2}+C(\delta)|\lambda|  |\varepsilon-\mu|^\delta TM_T^{{2+\delta}}.
\end{align*}
We now fix $R_0>0$ and take $R \in\left(R_0, \infty\right)$ as a parameter. It follows that
\begin{align*}
\limsup _{\mu, \varepsilon \downarrow 0}\EE\sup_{t\in[0,T]}\left\| u_{\varepsilon}(t)-u_{\mu}(t)\right\|_{L^2(B_{R_0})}^2
&\leq  \limsup _{\mu, \varepsilon \downarrow 0} \EE\sup_{t\in[0,T]}\left\|\zeta_R\big( u_{\varepsilon}(t)-u_{\mu}(t)\big)\right\|_{L^2}^2\\
&\lesssim_{|\lambda|,m,K_{\tilde{g}},T} \frac{1}{R}T\Big(M_T^2\EE\|u_0\|^2_{L^2}\Big)^{\frac{1}{2}} \xrightarrow[R \rightarrow \infty]{} 0 .
\end{align*}
Since $R_0>0$ is arbitrary, we deduce that $\left\{u_{\varepsilon}\right\}_{0<\varepsilon<1}$ forms a Cauchy sequence of $$L^2(\Omega;L^{\infty}(0,T;L^2_{\text{loc}}(\R^d) )).$$


\textbf{Step 2}
Assume that $u_0\in L^p(\Omega;H^1(\R^d))$, $p\geq 2$. 

By using Proposition \ref{prop-boundedness-L2}, combining Step 1 and \eqref{uni-bound-H1-eq}, we deduce that there exists a subsequence (still denoted by $u_{\varepsilon}$) and $u\in L^p(\Omega;L^{\infty}(0,T;H^1(\R^d) ))$, $p\geq 2$ and $u\in \mathbb{D}(0,T; L^2_{\text{loc}}(\R^d))$, $\mathbb{P}$-a.s. such that
\begin{align} 
&u_{\varepsilon}\rightarrow u\quad \text{in}\quad L^2(\Omega;L^{\infty}(0,T;L^2_{\text{loc}}(\R^d) ))\quad \text{as }\varepsilon\rightarrow0;\label{Th-proof-con-L2-loc}\\
&u_{\varepsilon}\rightharpoonup^* u\quad \text{in}\quad L^p(\Omega;L^{\infty}(0,T;L^2(\R^d) ))\quad \text{as }\varepsilon\rightarrow0;\label{Th-proof-con-weak-L2}\\
&u_{\varepsilon}\rightharpoonup^* u\quad \text{in}\quad L^p(\Omega;L^{\infty}(0,T;H^1(\R^d) ))\quad \text{as }\varepsilon\rightarrow0.\label{Th-proof-con--weak-H1}
\end{align}

In view of the strong convergence \eqref{Th-proof-con-L2-loc} and the mass conservation \eqref{mass-conser-u-epsilon} of $u_{\varepsilon}$, we infer for any $R>0$,
\begin{align*} 
\sup_{0\leq t\leq T}\|u(t)\|_{L^2(B_{R})} &\leq  \liminf_{\varepsilon\rightarrow 0} \sup_{0\leq t\leq T}\|u_{\varepsilon}(t)-u(t)\|_{L^2(B_{R})}+\liminf_{\varepsilon\rightarrow 0} \sup_{0\leq t\leq T}\|u_{\varepsilon}(t)\|_{L^2(B_{R})} \\
&\leq 
\|u_0\|_{L^2(\R^d)},
\end{align*}
 $\mathbb{P}$-a.s. with $B_R=\{x\in\R^d:|x|<R\}$. Letting $R\rightarrow \infty$ gives 
\begin{align}\label{L2-boundedness-u-eq} 
\sup_{0\leq t\leq T}\|u(t)\|_{L^2} \leq \|u_0\|_{L^2}, \quad\mathbb{P}\text{-a.s.}
\end{align}
Let $\rho_{n}$ be a sequence of mollifiers on $\R^d$. For any $\phi\in L^2(\R^d)$,  we have $\rho_{n}\star \phi \in C_c^{\infty}(\R^d)$ and $\rho_{n}\star \phi \rightarrow \phi$ in $L^2(\R^d)$. 
It follows that 
\begin{align}
 & \EE\sup_{t\in[0,T]}  \Big| \int_{\R^d} u_{\varepsilon}(t)\phi-u(t)\phi\, \d x\Big|\nonumber\\
 & \leq   \EE\sup_{t\in[0,T]}\Big|\int_{\R^d} (u_{\varepsilon}(t)-u(t))(\rho_n\star\phi-\phi)\d x \Big|+  \EE\sup_{t\in[0,T]}\Big|\int_{\R^d} (u_{\varepsilon}(t)-u(t))\rho_n\star\phi\d x \Big|\nonumber\\
    &\leq   \EE\sup_{t\in[0,T]}\|u_{\varepsilon}(t)-u(t)\|_{L^2}\|\rho_n\star\phi-\phi \|_{L^2}+  \EE\sup_{t\in[0,T]}\Big|\int_{\R^d} (u_{\varepsilon}(t)-u(t))\rho_n\star\phi\,\d x \Big|\nonumber\\
    &\leq   \EE\sup_{t\in[0,T]}(\|u_{\varepsilon}(t)\|_{L^2}+\|u(t)\|_{L^2})\|\rho_n\star\phi-\phi \|_{L^2}+  \EE\sup_{t\in[0,T]}\Big|\int_{\R^d} (u_{\varepsilon}(t)-u(t))\rho_n\star\phi\,\d x \Big|\nonumber\\
    &\rightarrow 0,\label{weak-con-u-varepsilon-L2}
\end{align}
where we used \eqref{mass-conser-u-epsilon}, $u\in L^2(\Omega;L^{\infty}(0,T;L^2(\R^d) ))$, and \eqref{Th-proof-con-L2-loc}. 

Similarly, since $u\in L^2(\Omega;L^{\infty}(0,T;H^1(\R^d) ))$, we have
\begin{align}
&\EE\sup_{0\leq t\leq T}\Big|\int_{\R^d} \nabla (u_{\varepsilon}(t)-u(t) )  \phi\, \d x\Big|\nonumber \\
&\leq \EE\sup_{0\leq t\leq T}\Big|\int_{\R^d} \nabla (u_{\varepsilon}(t)-u(t) )  (\rho_n\star\phi-\phi)\, \d x\Big| +\EE\sup_{0\leq t\leq T}\Big|\int_{\R^d} \nabla (u_{ \varepsilon}(t)-u(t) )\rho_n\star\phi \, \d x\Big|\nonumber\\
&\leq \EE\sup_{0\leq t\leq T}\|\nabla (u_{\varepsilon}(t)-u(t) )\|_{L^2}  \| \rho_n\star\phi-\phi\|_{L^2} +\EE\sup_{0\leq t\leq T} \|  u_{\varepsilon}(t)- u(t) \|_{L^2_{\text{loc}}}\|\nabla\rho_n\star\phi\|_{L^2}\nonumber\\
&\leq  \EE\sup_{0\leq t\leq T}(\|\nabla u_{\varepsilon}(t)\|_{L^2}+\|\nabla u(t) )\|_{L^2})  \| \rho_n\star\phi-\phi\|_{L^2}+\EE\sup_{0\leq t\leq T} \|  u_{\varepsilon}(t)- u(t) \|_{L^2_{\text{loc}}}\|\nabla\rho_n\star\phi\|_{L^2}\nonumber\\
&\rightarrow 0,\quad \text{as }n\rightarrow \infty,\; \varepsilon\rightarrow 0,\label{weak-con-u-epsi-H1}
\end{align}
where we also used \eqref{uni-bound-H1-eq} and \eqref{Th-proof-con-L2-loc}.

\textbf{Step 3. }Now we shall prove for all $t \in [0,T]$ 
\begin{align}\label{TH-proof-g-epsilon-con}
u_{\varepsilon} L_{\varepsilon}(u_{\varepsilon}) \rightarrow u \log |u|  \text { in } L^2(\Omega;L^{\infty}(0,T;L^2_{\text{loc}}(\mathbb{R}^d))) \quad \text { as } \varepsilon \downarrow 0.
\end{align}
 That is for any $D \subset \subset \mathbb{R}^d$
\begin{align*}
u_{\varepsilon} L_{\varepsilon}(u_{\varepsilon}) \rightarrow u \log |u| \quad \text { in } L^2(\Omega;L^{\infty}(0,T;L^2(D))) \quad \text { as } \varepsilon \downarrow 0.
\end{align*}

By Lemma \ref{lem-property-L_varepsilon},  for any small $\delta>0$ there exists $C(\delta)>0$ such that
\begin{align*}
|u_\varepsilon(t) L_\varepsilon(u_\varepsilon(t))-u(t) \log | u(t)|| \leq & \varepsilon+C(\delta)\varepsilon^{\delta}|u_\varepsilon(t)|^{1+\delta}+|u_\varepsilon(t)-u(t)|\\
&\quad+C(\delta) \big(1+|u_\varepsilon(t)|^{\frac12} \log ^{+}|u_\varepsilon(t)|+|u(t)|^{\frac12} \log ^{+}|u(t)|\big)|u_\varepsilon(t)-u(t)|^\frac12\\
\leq &\varepsilon+C(\delta)\varepsilon^{\delta}|u_\varepsilon(t)|^{1+\delta}+|u_\varepsilon(t)-u(t)|\\
&\quad+C(\delta)\big(1+|u_\varepsilon(t)|^{\frac12+\delta} +|u(t)|^{\frac12+\delta}\big)|u_\varepsilon(t)-u(t)|^\frac12,
\end{align*}
where we used $\log ^{+}|u(t)|  =\max \{0, \log |u(t)|\} \leq C(\delta) |u(t)|^\delta$, since $ \log ^{+}|u(t)| =\log |u(t)| \leqslant C(\delta) |u(t)|^\delta$, when $|u(t)|>1$. Fix $\delta>0$ satisfying $2+4 \delta<2^*=\frac{2d}{d-2}$. Then, we have
\begin{align*}
&\Big\||u_\varepsilon(t)|^{\frac{1}{2}+\delta}|u_\varepsilon(t)-u(t)|^{1 / 2}\Big\|_{L^2(D)}^2 
 \leq\Big(\int_{\R^d}|u_\varepsilon(t)|^{2+4 \delta}\d x\Big)^{1 / 2}\|u_\varepsilon(t)-u(t)\|_{L^2(D)}\\ 
 &\ \ \ \ \ \ \ \  \ \ \ \ \ \ \ \ 
 \ \ \ \ \ \ \ \ \ \ \ \ \ \ \ \  \ \ \ \ \ \ \ \ 
 \ \ \ \ \lesssim \|u_\varepsilon(t)\|_{H^1}^{1+2\delta}\|u_\varepsilon(t)-u(t)\|_{L^2(D)} ,\\
&\Big\||u(t)|^{\frac{1}{2}+\delta}|u_\varepsilon(t)-u(t)|^{1 / 2}\Big\|_{L^2(D)}^2 \lesssim \|u(t)\|_{H^1}^{1+2\delta}\|u_\varepsilon(t)-u(t)\|_{L^2(D)},
\end{align*}
where we used the embedding that $H^1(\R^d) \hookrightarrow  L^q(\R^d)$, for $2\leq q< 2^*$. Then the convergence \eqref{TH-proof-g-epsilon-con} follows from  \eqref{uni-bound-H1-eq}, \eqref{Th-proof-con-L2-loc} and the fact that $u\in L^2(\Omega;L^{\infty}(0,T;H^1(\R^d) ))$.



\textbf{Step 4} In this step, we shall verify that the process $u$ satisfies \eqref{SLSE-1} in $H^{-1}(D)$ $\mathbb{P}$-a.s. for every bounded open set $D\subset \R^d$. In view of Propositions \ref{prop-boundedness-L2} and \ref{prop-equv-two-solutions}, for every $\psi \in C_c^{\infty}(\mathbb{R}^d)$ and  $\phi \in C^1_c([0,T])$, we have
\begin{align*}
& \int_{0}^T\big\langle u_{\varepsilon}(t), \psi  \phi(t)\big\rangle_{L^2} \d t\\
    &=\int_{0}^T\,_{H^{-1}}\big\langle  \int_0^t \mathrm{i}\Delta u_{\varepsilon}(s)+2 \mathrm{i}\lambda u_{\varepsilon}(s)L_{\varepsilon}(u_{\varepsilon}(s))\d s, \psi\phi(s)\big\rangle_{ H^1}  \d t\\
    &\quad +\int_{0}^T\,_{H^{-1}}\big\langle \int_0^t \int_B \Phi(z, u_{\varepsilon}(s-))-u_{\varepsilon}(s-) \tilde{N}(\d s,\d z),\psi \phi(s-)\big\rangle_{H^1} \d t\\
      &\quad+\int_{0}^T\,_{H^{-1}}\big\langle \int_0^t  \int_B \Phi(z, u_{\varepsilon}(s))-u_{\varepsilon}(s)+\mathrm{i} \sum_{j=1}^m z_j g_j(u_{\varepsilon}(s)), \psi\phi(s)\big\rangle_{H^1}  \d t\\
        &=-\int_{0}^T \int_0^t \left\langle\mathrm{i}\nabla u_{\varepsilon}(s), \nabla \psi\phi(s)\right\rangle_{ L^2}\d s\,  \d t+\int_{0}^T \int_0^t \left\langle2 \mathrm{i}\lambda u_{\varepsilon}(s)L_{\varepsilon}(u_{\varepsilon}(s)), \psi\phi(s)\right\rangle_{ L^2}\d s\,  \d t\\
    &\quad+\int_{0}^T \int_0^t \int_B\left\langle  \Phi(z, u_{\varepsilon}(s-))-u_{\varepsilon}(s-) ,\psi\phi(s-) \right\rangle_{L^2} \tilde{N}(\d s,\d z)\,  \d t\\
      &\quad+\int_{0}^T \int_0^t \int_B \big\langle  \Phi(z, u_{\varepsilon}(s))-u_{\varepsilon}(s)+\mathrm{i} \sum_{j=1}^m z_j g_j(u_{\varepsilon}(s)), \psi \phi(s) \big\rangle_{L^2}  \nu(\d z)\d s\,  \d t.
\end{align*}
Let $K:=\text{supp}\, \psi$. In view of \eqref{weak-con-u-epsi-H1}, 
we obtain
\begin{align*}
&\mathbb{E}\int_0^T \Big|  \int_0^t \left\langle\mathrm{i}\nabla u_{\varepsilon}(s), \nabla \psi\phi(s)\right\rangle_{ L^2}\d s-  \int_0^t \left\langle\mathrm{i}\nabla u(s), \nabla \psi\phi(s)\right\rangle_{ L^2}\d s \Big| \d t\\
&\leq \mathbb{E} \sup_{s\in[0,T]}|\langle \mathrm{i} \nabla u_{\varepsilon} (s)- \mathrm{i} \nabla u(s),\nabla \psi\rangle_{L^2}|  \int_0^T  t    |\phi(t)|  \d t\rightarrow 0.
\end{align*}
Hence 
\begin{align*}
 \int_0^\cdot \left\langle\mathrm{i}\nabla u_{\varepsilon}(s), \nabla \psi\phi(s)\right\rangle_{ L^2}\d s\rightarrow  \int_0^\cdot \left\langle\mathrm{i}\nabla u(s), \nabla \psi\phi(s)\right\rangle_{ L^2}\d s
\end{align*}
 in $L^1(\Omega\times[0,T])$, as $\varepsilon\downarrow 0$.  Since by Step 3, 
 \begin{align*}
2 u_{\varepsilon}(\cdot) L_{\varepsilon} \left(u_{\varepsilon}(\cdot)\right) \rightarrow 2u(\cdot)\log |u(\cdot)| \text { in } L^2(\Omega;L^{\infty}(0,T;L^2_{\text{loc}}(\mathbb{R}^d))) \quad \text { as } \varepsilon \downarrow 0.
\end{align*}
Furthermore, the function $h(t,x)= \psi(x)\phi(t) $ has compact support. It follows that
 \begin{align*}
2 u_{\varepsilon}(\cdot) L_{\varepsilon} \left(u_{\varepsilon}(\cdot)\right)\psi\phi(\cdot) \rightarrow 2u(\cdot)\log |u(\cdot)|\psi\phi(\cdot) \text { in } L^2(\Omega;L^{\infty}(0,T;L^2(\mathbb{R}^d))) \quad \text { as } \varepsilon \downarrow 0,
\end{align*}
To prove the following convergences in $L^2(\Omega\times[0,T])$
\begin{align*}
&\int_{0}^\cdot\int_B \big\langle   \Phi(z, u_{\varepsilon}(s-))-u_{\varepsilon}(s-), \psi \phi(s-) \big\rangle_{L^2} \tilde{N}(\d s,\d z) \\
& \rightarrow \int_{0}^\cdot\int_B \big\langle   \Phi(z, u(s-))-u(s-), \psi \phi(s-) \big\rangle_{L^2} \tilde{N}(\d s,\d z),
\end{align*}
in view of the It\^o isometry, it is equivalent to prove 
\begin{align}\label{main-Th-proof-id-u-eq10}
& \int_0^T \int_B \big|\big\langle   \Phi(z, u_{\varepsilon}(s))-u_{\varepsilon}(s)-( \Phi(z, u(s))-u(s)), \psi \phi(s) \big\rangle_{L^2}\big|^2 \nu(\d z)\d s\rightarrow 0
\end{align}
in $L^1(\Omega\times[0,T])$, as $\varepsilon\downarrow0$. 
Observe that by \eqref{G-est-Lip-1}, we have 
\begin{align*}
&\big|\big\langle   \Phi(z, u_{\varepsilon}(s))-u_{\varepsilon}(s)-( \Phi(z, u(s))-u(s)), \psi \phi(s) \big\rangle_{L^2}\big|^2\\
&\leq \| \Phi(z, u_{\varepsilon}(s))-u_{\varepsilon}(s)-( \Phi(z, u(s))-u(s))\|_{L^2(K)}^2 \| \psi \phi(s) \|_{L^2}^2\\
&\lesssim_{K} \| u_{\varepsilon}(s) -u(s)\|_{L^2(K)}^2|z|_{\R^m}^2 \| \psi \phi \|_{L^\infty}^2.
\end{align*}
Here $\| \psi \phi \|_{L^\infty}=\sup_{x\in\R^d}\sup_{t\in[0,T]}|\psi(x) \phi(t)|<\infty$. Then the convergence \eqref{main-Th-proof-id-u-eq10} follows immediately from \eqref{Th-proof-con-L2-loc}. By applying  \eqref{H-est-Lip-1}, we arrive at the following estimate
\begin{align*}
&\Big|   \big\langle  \Phi(z, u_{\varepsilon}(s))-u_{\varepsilon}(s)+\mathrm{i} \sum_{j=1}^m z_j g_j(u_{\varepsilon}(s)), \psi \phi(s) \big\rangle_{L^2} -    \big\langle  \Phi(z, u(s))-u(s)+\mathrm{i} \sum_{j=1}^m z_j g_j(u(s)), \psi \phi(s) \big\rangle_{L^2}   \Big|\\
&\lesssim_{m,K_{\tilde{g}}, K}|z|_{\R^m}^2\|  u_{\varepsilon}-u \|_{L^2(K)}\|\psi \phi \|_{L^\infty}.
\end{align*} Hence we can derive that in $L^1(\Omega\times[0,T])$
\begin{align*}
&\int_0^\cdot \int_B \big\langle  \Phi(z, u_{\varepsilon}(s))-u_{\varepsilon}(s)+\mathrm{i} \sum_{j=1}^m z_j g_j(u_{\varepsilon}(s)), \psi \phi(s) \big\rangle_{L^2}\,  \nu(\d z)\d s \\
&\rightarrow \int_0^\cdot \int_B \big\langle  \Phi(z, u(s))-u(s)+\mathrm{i} \sum_{j=1}^m z_j g_j(u(s)), \psi \phi(s) \big\rangle_{L^2}  \,\nu(\d z)\d s,\quad \text{as}\quad \varepsilon\downarrow0.
\end{align*}
All of the above results in this step imply that
\begin{align*}
   & \int_{0}^T\left( u(t), \psi  \phi(t)\right)_{L^2} \d t\\
        &= -\int_{0}^T \int_0^t \left\langle\mathrm{i}\nabla u(s), \nabla \psi\phi(s)\right\rangle_{ L^2}\d s  \d t+\int_{0}^T \int_0^t \left\langle2 \mathrm{i}\lambda u(s)\log |u(s)|, \psi\phi(s)\right\rangle_{ L^2}\d s\,  \d t\\
    &\quad+ \int_{0}^T \int_0^t \int_B\left\langle  \Phi(z, u(s-))-u(s-) ,\psi\phi(s-) \right\rangle_{L^2} \tilde{N}(\d s,\d z)\,  \d t\\
      &\quad+ \int_{0}^T \int_0^t \int_B \big\langle  \Phi(z, u(s))-u(s)+\mathrm{i} \sum_{j=1}^m z_j g_j(u(s), \psi \phi(s) \big\rangle_{L^2}  \nu(\d z)\d s\,  \d t.
\end{align*}
Let 
\begin{align*}
   \hat{u}(t)
           &:=\int_0^t \mathrm{i}\Delta u(s) \d s +\int_0^t  \mathrm{i}\lambda u(s)\log |u(s)|^2 \d s + \int_0^t \int_B  \Phi(z, u(s-))-u(s-)  \tilde{N}(\d s,\d z) \\
      &\quad + \int_0^t \int_B \Phi(z, u(s))-u(s)+\mathrm{i} \sum_{j=1}^m z_j g_j(u(s))\nu(\d z)\d s.
\end{align*}
Then $\hat{u}\in \mathbb{D}(0,T; H^{-1}(D))$ and $u=\hat{u}$ in $L^2(\Omega\times[0,T]; H^{-1}(D))$. 
Recall that $u\in \mathbb{D}(0,T; L^{2}_{\text{loc}}(\mathbb{R}^d))$, $\mathbb{P}$-a.s. 
Hence we obtain
\begin{align*}
        \mathbb{P}\{ u(t)=\hat{u}(t)\quad\text{ for all }t\in[0,T]  \}=1.
\end{align*}
Hence in $H^{-1}(D)$ $\mathbb{P}$-a.s.
\begin{align}
    u(t)
          &=\int_0^t \mathrm{i}\Delta u(s) \d s +  \int_0^t  \mathrm{i}\lambda u(s)\log |u(s)|^2 \d s+ \int_0^t \int_B  \Phi(z, u(s-))-u(s-)  \tilde{N}(\d s,\d z) \nonumber\\
      &+ \int_0^t \int_B \Phi(z, u(s))-u(s)+\mathrm{i} \sum_{j=1}^m z_j g_j(u(s))\nu(\d z)\d s .\label{eq-proof-u-H-1}
\end{align}

\textbf{Step 5} Let us now prove the uniqueness of the solution. 
 Assume that $u,v\in L^2(\Omega;L^{\infty}(0,T;H^1(\R^d))$ be two solutions to \eqref{SLSE-1}. Set $$M_T:=\max\{\mathbb{E}\sup_{0\leq t\leq T}\|u(t)\|^2_{H^1},\mathbb{E}\sup_{0\leq t\leq T}\|v(t)\|^2_{H^1}\}<\infty.$$

 Recall  \eqref{L2-boundedness-u-eq} and $\zeta_R$ in Step 1. Arguments similar to the  proof given in Step 1, showing that the sequence $\{u_{\varepsilon}\}_{0<\varepsilon<1}$ forms a Cauchy sequence, gives
\begin{align*}
  \left\|\zeta_R\big( u(t)-v(t)\big)\right\|_{L^2}^2= & \int_0^t2 \langle \nabla(\zeta_R^2) \nabla(u(s)-v(s)), \mathrm{i}( u(s)-v(s))\rangle_{L^2}\d s\\
&+\int_0^t 2\left\langle  u(s)-v(s), \mathrm{i}2 \lambda\zeta_R^2\big(u(s) \log|u(s)|- v(s) \log|v(s)|\big) \right\rangle_{L^2} d s\\
&+\int_0^t\int_B \big\| \zeta_R \big(\Phi(z, u(s-)) - \Phi(z, v(s-))\big)\big\|_{L^2}^2- \big\| \zeta_R\big(u(s-)-v(s-)\big)\big\|_{L^2}^2   \tilde{N}(\mathrm{d} s, \mathrm{d} z) \\
&+\int_0^t\int_B \big\| \zeta_R\big(\Phi(z, u(s)) - \Phi(z, v(s))\big)\big\|_{L^2}^2- \big\| \zeta_R\big(u(s)-v(s)\big)\big\|_{L^2}^2 \\
&\quad \quad\quad-2 \langle u(s)-v(s),\zeta_R^2\big(\Phi(z, u(s))-\Phi(z, v(s))-\big(u(s)-v(s)\big)\big)\rangle_{L^2}  \nu(\d z)\d s\\
&+\int_0^t\int_B 2\big\langle u(s)-v(s),\zeta_R^2\big(\Phi(z, u(s))-\Phi(z, v(s))-\big(u(s)-v(s)\big)\big)\rangle_{L^2}\\
&\quad \quad\quad+2\big\langle u(s)-v(s), \mathrm{i} \sum_{j=1}^m z_j \zeta_R^2 \big(g_j(u(s))- g_j(v(s))\big) \big\rangle_{L^2} \nu(\d z)\d s.
\end{align*}
Taking inequality \eqref{quasi-mon-intro} into account, we infer for $\delta\in(0,1)$
\begin{align*}
&\big\langle u(s)-v(s), \mathrm{i}2 \lambda \zeta_R^2\big(u(s)\log |u(s)| - v(s) \log|v(s)|\big) \big\rangle_{L^2}\\
&=-2\lambda \int_{\R^d} \operatorname{Im} \overline{ u(s)-v(s) }\zeta_R^2\big(u(s)\log |u(s)| - v(s) \log|v(s)|\big) \d x\\
&\lesssim |\lambda|  \|\zeta_R(u-v )\|_{L^2}^2.
\end{align*}
Similarly considerations as in Step 1, we have
\begin{align*}
& \EE\sup_{t\in[0,T]}\left\|\zeta_R\big( u(t)-v(t)\big)\right\|^2_{L^2} \\
&\leq
\frac{C}{R}T\Big(M_T\EE\|u_0\|^2_{L^2}\Big)^{1/2}+C |\lambda|\int_0^T \EE\sup_{t\in[0,s]} \|\zeta_R\big(u(t)-v(t)\big) \|_{L^2}^2\d s\\
&+ \frac12 \EE\sup_{t\in[0,T]} \big\| \zeta_R\big( u(t)-v(t)\big)\big\|_{L^2}^2+C(m) \int_0^T \EE\sup_{t\in[0,s]}\big\|\zeta_R\big( u(t)-v(t)\big)\big\|_{L^2}^2\d s\\
&+C(m,K_{\tilde{g}} )\,\EE \int_0^T \int_B  \big\|\zeta_R\big( u(s)-v(s)\big)\big\|_{L^2}^2 |z|^2_{\R^m}  \nu(\d z)\d s,
\end{align*}
By Gronwal's Lemma, we infer
\begin{align*}
\EE\sup_{t\in[0,T]}\left\|\zeta_R\big( u(t)-v(t)\big)\right\|_{L^2}^2 \lesssim_{|\lambda|,m,K_{\tilde{g}},T} \frac{1}{R}\Big(M_T\EE\|u_0\|^2_{L^2}\Big)^{1/2}.
\end{align*}
Applying Fatou's lemma, we obtain
$$
 \EE\sup_{t\in[0,T]}\|u(t)-v(t)\|_{L^2}^2\leq \liminf_{R\rightarrow\infty} \EE\sup_{t\in[0,T]}\left\|\zeta_R\big( u(t)-v(t)\big)\right\|_{L^2}^2= 0,
$$
which yields the uniqueness of the solution.

\textbf{Step 6} 
Assume that $u_0\in L^p(\Omega,\mathcal{F}_0;W)$. 
In view of Lemma \ref{lem-est-F-function}, we have for $p\geq 2$
 \begin{equation}
\mathbb{E}\left[\sup _{t \in[0, T]}\Big|  \int_{\R^d}  \big|F( |u^\varepsilon(t)|)  \big| \d x\Big|^p\right]\leq C(u_0,\lambda,m,T,p)<\infty.
\end{equation}
By using the Fatou lemma, we obtain
\begin{align*}
      \mathbb{E}\Big[\sup _{t \in[0, T]}\Big|  \int_{\R^d}  \big| F(|u(t)|) \big| \d x\Big|^p\Big]& \leq \liminf_{\varepsilon\rightarrow0}  \mathbb{E}\Big[\sup _{t \in[0, T]}\Big|  \int_{\R^d}  \big| F(|u^\varepsilon(t)|)  \big| \d x\Big|^p\Big]\\
      &\leq C(u_0,\lambda,m,T,p)<\infty.
\end{align*}
Set $B(s)=F(s)-N(s)$, $s>0$, where $F$ and $N$ are  defined in \eqref{defini-F-functiom} and \eqref{defini-A-functio} respectively. Note that for any $0<\delta<\frac{4}{d-2}$, we have
\begin{align*}
      \int_{\R^d} | B(|u(t)|)| \d x \lesssim_{\delta}  \int_{\R^d} |u(t)|^{2+\delta} \d x\lesssim \|u\|_{H^1}^{2+\delta},
\end{align*}
where we used again the embedding $H^1(\R^d) \hookrightarrow L^{2+\delta}(\mathbb{R}^d) $, $0<\delta<\frac{4}{d-2}$. 
It follows that
\begin{align*}
     \mathbb{E}\sup_{t\in[0,T]}\Big|  \int_{\R^d} N(|u(t)|) \d x\Big|^p &\lesssim_{p,\delta }  \mathbb{E}\sup_{t\in[0,T]}\Big|  \int_{\R^d} |F(|u(t)|)|\d x\Big|^p + \mathbb{E}\sup_{t\in[0,T]}\|u(t)\|_{H^1}^{(2+\delta)p}\\
     &\leq C(u_0,\lambda,m,T,p)<\infty.
\end{align*}
In view of \eqref{norm-in-Orlicz-space}, we infer for any $p\geq 2$,
\begin{align*}
   \mathbb{E}\sup_{t\in[0,T]}\|u(t)\|_{W}^p\leq C(u_0,\lambda,m,T,p)<\infty.
\end{align*}
Therefore, we deduce that
\begin{align*}
       u\in L^{\infty}(0,T;W)\quad\mathbb{P}\text{-a.s.}
\end{align*}
It follows from \eqref{eq-proof-u-H-1} and Lemma \ref{lem-Caz-bounded-L} that \eqref{eq-proof-u-H-1} holds in $W'$. 

Recall that we have the continuous embedding $W\hookrightarrow L^2(\R^d) \hookrightarrow W'$. Applying the It\^o formula (see e.g. \cite[Theorem A.1]{BHZ}), there exists a $\mathbb{P}$-full measure event   $\Omega'\subset\Omega$ with $\mathbb{P}(\Omega')=1$ such that on $\Omega'$, $u$ is an $L^2(\R^d)$-valued c\`adl\`ag process  and  we have $\mathbb{P}$-a.s. for every $t\in[0,T]$,
\begin{align*}
    \| u(t)\|^2_{L^2} 
       =&\|u(0)\|^2_{L^2}+ \int_0^t 2\,_{W}\langle u(s), \mathrm{i}\Delta u(s)+\lambda \mathrm{i}u(s)\log|u(s)|^2\rangle_{W'}\mathrm{~d} s \\
       &+\int_0^t \int_B\left[\| \Phi(z, u(s-))\|_{L^2}^2- \| u(s-)\|^2_{L^2}\right] \tilde{N}(\mathrm{d} s, \mathrm{d} z)\\
   &+\int_0^t \int_B\left[\| \Phi(z, u(s))\|^2_{L^2}-\| u(s)\|^2_{L^2} - \operatorname{Re} \langle u(s), \Phi(z, u(s))-u(s)\rangle_{L^2} \right] \nu(dz)ds\\
   &+ \int_0^t \int_B \Big(\operatorname{Re} \langle u(s), \Phi(z, u(s))-u(s)+\mathrm{i} \sum_{j=1}^m z_j g_j(u(s))  \rangle_{L^2}\Big)     \nu(dz)ds\\
    =&\|u(0)\|^2_{L^2},
\end{align*}
where we used the fact that $\operatorname{Re}(x\overline{\text{i}x})=0$, $\langle u(s), \text{i} \triangle u(s)\rangle_{L^2}=0$, and $\|\Phi(z, u)\|_{L^2}^2=\|u\|_{L^2}^2$, for $ u \in L^2(\mathbb{R}^d)$, $z \in \mathbb{R}^m$.

The proof of Theorem \ref{Them-main} is now complete.

\end{proof}

\section*{Appendix}\label{App-sec}

\begin{proof}[Proof of Lemma \ref{lem-property-L_varepsilon}]


Assertion (a)  is standard. Now let us prove assertion (b).
\begin{enumerate}
\item[(b)] Without loss of generality we may assume $0<\left|u_2\right| \leq\left|u_1\right|$. We first note that
\begin{align*}
u_1 L_{\varepsilon}\left(u_1\right)-u_2 L_{\varepsilon}\left(u_2\right)=u_2(L_{\varepsilon}(u_1)-L_{\varepsilon}(u_2))+(u_1-u_2) L_{\varepsilon}(u_1) .
\end{align*}
Since $\phi(x)=\frac{x+\varepsilon}{1+\varepsilon x}$ is increasing when $x>0$,  it follows that 
if $0<\left|u_2\right| \leq\left|u_1\right|$, we have $\phi(|u_2|)\leq\phi(|u_1|)$ and
$$\phi(|u_2|)\leq \delta \phi(|u_1|)+(1-\delta)\phi(|u_2|)\leq \phi(|u_1|).$$
Hence 
\begin{align}
\begin{split}\label{proof-est-L-eq-1}
\left|L_{\varepsilon}\left(u_1\right)-L_{\varepsilon}\left(u_2\right)\right| &=|\log(\phi(|u_1|))-\log(\phi(|u_2|))|\\
&=\frac1{\delta \phi(|u_1|)+(1-\delta)\phi(|u_2|)} |\phi(|u_1|)-\phi(|u_2|)|\\
&\leq \frac{1+\varepsilon\left|u_2\right|}{\left|u_2\right|+\varepsilon}\left|\frac{\left|u_1\right|+\varepsilon}{1+\varepsilon\left|u_1\right|}-\frac{\left|u_2\right|+\varepsilon}{1+\varepsilon\left|u_2\right|}\right| \\
& =\left|\frac{(1-\varepsilon^2)\left(|u_1|-|u_2|\right)}{(|u_2|+\varepsilon)(1+\varepsilon|u_1|)}\right| \\
& \leq\left(1-\varepsilon^2\right)\left|u_2\right|^{-1}\left|u_1-u_2\right|,
\end{split}
\end{align}
where we used the fact that $(|u_2|+\varepsilon)(1+\varepsilon|u_1|)> |u_2|$.

{The above result combining Assertion (a) yields Assertion (b).}

\item[(c)] Observe first that
\begin{align*}
&\operatorname{Im}(\overline{u_1}-\overline{u_2})(u_1 L_{\varepsilon}(u_1)-u_2 L_{\mu}(u_2))\\
&=\operatorname{Im}(\overline{u_1}-\overline{u_2})\big(u_1L_{\varepsilon}(u_1)-u_2L_{\varepsilon}(u_2))+u_2(L_{\varepsilon}(u_2)-L_{\mu}(u_2))\big).
\end{align*}
Then using the inequality
\begin{align*}
\operatorname{Im}(\overline{u_1} {u_2}) \leq \min \{|u_1|,|u_2|\}|u_1-u_2|
\end{align*}
and also \eqref{proof-est-L-eq-1}, we have
\begin{align*}
\Big|\operatorname{Im}\left[(\overline{u_1}-\overline{u_2})(u_1 L_{\varepsilon}(u_1)-u_2 L_{\varepsilon}(u_2))\right]\Big|&=\Big|\operatorname{Im}(\overline{u_1} u_2)(L_{\varepsilon}(u_1)-L_{\varepsilon}(u_2))\Big|\\
&\leq (1-\varepsilon^2)|u_1-u_2|^2.
\end{align*}
By applying the fact that $\log (1+x) \leq C(\delta) x^{\delta}$,  $\delta\in(0,1]$, $x\geq 0$, we infer
\begin{align*}
\left|L_{\varepsilon}\left(u_2\right)-L_{\mu}\left(u_2\right)\right| &=\Big| \log\big( \frac{\varepsilon+|u_2|}{1+\varepsilon|u_2|}\big)-\log\big( \frac{\mu+|u_2|}{1+\mu|u_2|}\big) \Big|\\
&=\Big| \log\big( \frac{\varepsilon+|u_2|}{\mu+|u_2|}\big)-\log\big( \frac{1+\varepsilon|u_2|}{1+\mu|u_2|}\big) \Big|\\
&\leq \Big| \log\big( 1+\frac{\varepsilon-\mu}{\mu+|u_2|}\big)\Big| +\Big| \log\big(1+ \frac{(\varepsilon-\mu)|u_2|}{1+\mu|u_2|}\big) \Big|\\
&\leq C \frac{| \varepsilon-\mu|}{\mu+|u_2|}+C(\delta)\Big|   \frac{(\varepsilon-\mu)|u_2|}{1+\mu|u_2|}\         \Big|^{\delta}\\
&\leq C| \varepsilon-\mu| \frac1{|u_2|} + C(\delta)| \varepsilon-\mu|^{\delta}  |u_2|^{\delta}.
\end{align*}
It follows that
\begin{align*}
&\left|\operatorname{Im}\left[(\overline{u_1}-\overline{u_2})(u_1 L_{\varepsilon}(u_1)-u_2 L_{\mu}(u_2))\right]\right|\\
&\leq (1-\varepsilon^2)|u_1-u_2|^2 +C |\varepsilon-\mu   ||u_1-u_2| +C(\delta)| \varepsilon-\mu|^{\delta}  |u_2|^{1+\delta} |u_1-u_2|,
\end{align*}
which proves assertion (c).
\item[(d)]We will examine two specific cases. \\
\textbf{Case 1}. Assume that $|u_1|\leq |u_2|$. 
Note that
\begin{align}
u_1 L_\varepsilon(u_1)-u_2 \log | u_2|=u_1( L_\varepsilon(u_1)-\log |u_2| )+(u_1-u_2)\log|u_2|.\label{est-L-esp-eq-11}
\end{align}
Since $|u_1|\leq |u_2|$, we estimate the first term on the right-hand side of \eqref{est-L-esp-eq-11} as 
\begin{align*}
| L_{\varepsilon}(u_1)- \log |u_2||&=\Big| \log \left(\frac{|u_1|+\varepsilon}{1+\varepsilon|u_1|}\right)- \log |u_2| \Big|\\
&\leq  \Big| \log \left(\frac{|u_1|+\varepsilon}{|u_2|}\right)  \Big| +  \Big|  \log (1+\varepsilon|u_1|) \Big|\\
&\leq \frac{|u_1-u_2|+\varepsilon}{\min\{|u_2|,|u_1|+\varepsilon\}}+C(\delta)\varepsilon^{\delta}|u_1|^{\delta}\\
&\leq \frac{|u_1-u_2|+\varepsilon}{|u_1|}+C(\delta)\varepsilon^{\delta}|u_1|^{\delta},
\end{align*}
where we used $\log (1+x) \leq C(\delta) x^{\delta}$,  $\delta\in(0,1]$, $x\geq 0$, here $C(1)=1$. Hence
\begin{align*}
| u_1(L_{\varepsilon}(u_1)- \log |u_2|)|\leq |u_1-u_2|+\varepsilon +C(\delta)\varepsilon^{\delta}|u_1|^{1+\delta}.
\end{align*}
For the second term on the right-hand side of \eqref{est-L-esp-eq-11}, we find, for any $\alpha\in(0,1)$,
\begin{align*}
|(u_1-u_2) \log | u_2|| 
& \leq C(\alpha)|u_1-u_2|^\alpha|u_2|^{1-\alpha} |\log |u_2| |
\end{align*}
Since $|u_2|^{1-\alpha} |\log |u_2|| $ is bounded when $0<|u_2|<1$, 
 we have $$|u_1-u_2|^\alpha|u_2|^{1-\alpha}| \log |u_2|| \leq C(\alpha) |u_1-u_2|^\alpha,\quad\text{for}\; |u_1|\leq |u_2|<1.$$
When $|u_2|\geq 1$, we find $|u_2|^{1-\alpha} |\log |u_2|| \leq |u_2|^{1-\alpha}\log^+|u_2|$.
It follows that
\begin{align*}
|(u_1-u_2) \log | u_2|| 
 \leq C(\alpha) |u_1-u_2|^\alpha\left(1+|u_2|^{1-\alpha} \log ^{+}|u_2|\right).
\end{align*}
Combining the above estimates yields \eqref{L-var-est-3}.\\

\textbf{Case 2}: $|u_2| \leq|u_1|$. In this case we write
\begin{align}
u_1 L_\varepsilon(u_1)-u_2 \log | u_2|=u_2( L_\varepsilon(u_1)-\log |u_2| )+ L_\varepsilon(u_1) (u_1-u_2).\label{est-L-esp-eq-12}
\end{align}
Similar as before, we can estimate the first term on the right-hand side of \eqref{est-L-esp-eq-12} as
\begin{align}
| L_{\varepsilon}(u_1)- \log |u_2||
&\leq \frac{|u_1-u_2|+\varepsilon}{|u_2|}+C(\delta)\varepsilon^{\delta}|u_1|^{\delta}\label{L-var-eq-proof-3}
\end{align}
and hence
\begin{align*}
| u_2(L_{\varepsilon}(u_1)- \log |u_2|)|
&\leq |u_1-u_2|+\varepsilon +C(\delta)\varepsilon^{\delta}|u_1|^{1+\delta}  .
\end{align*}

For the second term on the right-hand side of \eqref{est-L-esp-eq-12}, similar to Case 1, we consider two cases:
\begin{enumerate}
\item $|u_1|\geq 1$:  Since $0<\varepsilon<1$, we have  $L_{\varepsilon}(u_1)\geq0$ and 
 $\frac{\varepsilon+|u_1|}{1+\varepsilon|u_1|}\leq|u_1|^2$. It follows that 
\begin{align}\label{L-var-eq-proof-1}
L_{\varepsilon}(u_1)=\log\Big(\frac{\varepsilon+|u_1|}{1+\varepsilon|u_1|}\Big)\leq\log |u_1|^2=2\log^+|u_1|.
\end{align}
\item $|u_1|\leq 1$: Since $\varepsilon \mapsto L_{\varepsilon}(x)$ is increasing with $\varepsilon$ if $x \in(0,1]$, it follows that
\begin{align*}
\log x\leq L_{\varepsilon}(x)\leq0.
\end{align*}
Hence
\begin{align}\label{L-var-eq-proof-2}
 \Big||u_1|^{1-\alpha}L_{\varepsilon}(u_1)  \Big|\leq \big| |u_1|^{1-\alpha}\log |u_1|\big|\leq C(\alpha),
\end{align}
by the boundedness of $|x|^{1-\alpha}|\log|x||$ when $0<|x|\leq1$.
\end{enumerate}
Therefore, by using \eqref{L-var-eq-proof-1} and \eqref{L-var-eq-proof-2}, we obtain, for $|u_2| \leq|u_1|$, 
\begin{align}
 \Big| L_\varepsilon(u_1) (u_1-u_2) \Big|&\leq | L_\varepsilon(u_1)||u_1-u_2|^{1-\alpha}|u_1-u_2|^{\alpha}\nonumber\\
 & \leq  | L_\varepsilon(u_1)|(|u_1|+|u_2|)^{1-\alpha}|u_1-u_2|^{\alpha}\nonumber\\
 &\leq C(\alpha) |u_1|^{1-\alpha }| L_\varepsilon(u_1)| |u_1-u_2|^{\alpha}\nonumber\\
  &\leq C(\alpha)(1+ |u_1|^{1-\alpha}\log^+|u_1| ) |u_1-u_2|^{\alpha} \label{L-var-eq-proof-4}
 \end{align}
 Combining estimates \eqref{L-var-eq-proof-3} and \eqref{L-var-eq-proof-4} proves \eqref{L-var-est-3}.

\end{enumerate}
\end{proof}


\begin{proof}[Proof of Proposition \ref{prop-equv-two-solutions}]

Let $X=H^{m-2}(\R^d;\mathbb{C})$, $m=0,1$. It is known that $A=\mathrm{i}\Delta$ is $m$-dissipative in $X$ with dense domain $\mathcal{D}(A)=H^m(\R^d)$. 
As before, Let $(S_t)_{t \in \mathbb{R}}$ be the group generated by $A$ on $X$.

Note that $(S_t)_{t \in \mathbb{R}}$ can be extended to a group of isometries $(T_t)_{t \in \mathbb{R}}$ in $H^{m-4}(\mathbb{R}^d)$ which is the group generated by the operator $A_{-1}$ defined by 
\begin{align*}
\mathcal{D}(A_{-1})&=H^{m-2}(\mathbb{R}^d)\\
A_{-1} u&=\mathrm{i} \Delta u\quad \text{ for }u \in D(A_{-1}). 
\end{align*}
Moreover,  $A_{-1}$ coincides with $A$ on $H^{m}(\mathbb{R}^d)$ and $T_t$ coincides with $S_t$ on $H^{m-2}(\mathbb{R}^d)$. 

$(i) \Rightarrow (ii)$: 
Define a function $f \in C^{1,2}([0, t] \times H^{m-2}(\mathbb{R}^d); H^{m-4}(\mathbb{R}^d))$  by
\begin{align*}
f(s, x):=T_{t-s} x, \quad s \in[0, t], \quad x \in H^{m-2}(\mathbb{R}^d).
\end{align*}
Applying the It\^o formula to the function $f$ and the process $u$,  we infer
\begin{align*}
u(t)=&T_t u_0-\int_0^tA_{-1} T_{t-s} u(s) \mathrm{d} s+\int_0^t T_{t-s}[ A u(s)+2\lambda u(s) L_{\varepsilon}(u(s) )] \mathrm{d} s\\
&+\int_0^t\int_BT_{t-s}[\Phi(z, u(s-))-u(s-)]  \tilde{N}(\d s,\d z)\\
& +\int_0^t\int_BT_{t-s} \Big[\Phi(z, u(s))-u(s)+\mathrm{i} \sum_{j=1}^m z_j g_j(u(s))\Big] \nu(\mathrm{d} z) \mathrm{d} s\end{align*}
in $H^{m-4}(\mathbb{R}^d)$ for all $t\in[0,T]$ almost surely. Thanks to the c\`adl\`ag continuity of the processes, the null set can be chosen independently of $t \in[0, T]$. Since $u\in L(\Omega; L^\infty(0,T;H^m(\R^d)))$ and $|L_{\varepsilon}(u)| \leq|\log \varepsilon|$,  we infer $uL_{\varepsilon}(u) \in L(\Omega; L^\infty(0,T;H^{m}(\R^d)))$.  By assumption $u_0\in H^{m}(\R^d)$, we deduce that $u$ statisfies
\begin{align*}
u(t)
=&S_t u_0+\int_0^t S_{t-s}\big( 2\lambda u(s) L_{\varepsilon}(u(s) )\big)\mathrm{d} s+\int_0^t\int_B S_{t-s}[\Phi(z, u(s-))-u(s-)]  \tilde{N}(\d s,\d z)\\
& +\int_0^t\int_BS_{t-s} \Big[\Phi(z, u(s))-u(s)+\mathrm{i} \sum_{j=1}^m z_j g_j(u(s))\Big] \nu(\mathrm{d} z) \mathrm{d} s
\end{align*}
in $H^{m-2}(\mathbb{R}^d)$.

$(ii) \Rightarrow (i)$: Assume that \eqref{mild-solution-eq} holds.  Then we have
\begin{align*}
\int_0^t  A_{-1} u(s) \mathrm{d} s=&\int_0^t  A_{-1} S_s u_0 \mathrm{~d} s+\int_0^t  A_{-1} \int_0^s S_{s-r}  \big(2u(r)L_{\varepsilon}(u(r))\big) \mathrm{d} r \mathrm{~d} s \\
&+\int_0^t  A_{-1} \int_0^s\int_B S_{s-r} [\Phi(z, u(r-))-u(r-)] \tilde{N}(\d r,\d z)\;\d s\\
&+\int_0^t  A_{-1} \int_0^s\int_B S_{s-r}\Big[\Phi(z, u(r))-u(r)+\mathrm{i} \sum_{j=1}^m z_j g_j(u(r))\Big] \nu(\mathrm{d} z)\d r\;\d s,
\end{align*}
$\mathbb{P}$-a.s. in $H^{m-4}(\R^d)$ for all $t\in[0,T]$. Observe that

\begin{align*}
&\mathbb{E} \int_0^T\int_B \|  A_{-1}S_{s-r} [\Phi(z, u(r))-u(r)]  \|^2_{H^{m-4}(\R^d)}\nu(\d z)\d r\\
&\leq \mathbb{E} \int_0^T\int_B \|  S_{s-r} [\Phi(z, u(r))-u(r)]  \|^2_{H^{m-2}(\R^d)}\nu(\d z)\d r\\
&\leq \mathbb{E} \int_0^T\int_B \|  \Phi(z, u(r))-u(r) \|^2_{H^{m-2}(\R^d)}\nu(\d z)\d r\\
&\leq \mathbb{E} \int_0^T\int_B \|  \Phi(z, u(r))-u(r) \|^2_{H^{m}(\R^d)}\nu(\d z)\d r\\
&\leq  \mathbb{E} \int_0^T\int_B \|u(r)\|^2_{H^{m}(\R^d)}|z|^2_{\R^m}\nu(\d z)\d r<\infty,
\end{align*}
where we used the embedding $H^{m}(\R^d)\hookrightarrow H^{m-2}(\R^d)$ and Lemma \ref{lem-stoch-lip-1} when $m=0$ or Lemma \ref{lem-G-H1} when $m=1$ respectively.
Similarly, we have
\begin{align*}
&\mathbb{E} \int_0^T\int_B \|  A_{-1}S_{s-r}\Big[\Phi(z, u(r))-u(r)+\mathrm{i} \sum_{j=1}^m z_j g_j(u(r))\Big] \|_{H^{m-4}(\R^d)}\nu(\d z)\d r\\
&\leq \mathbb{E} \int_0^T\int_B \| S_{s-r}\Big[\Phi(z, u(r))-u(r)+\mathrm{i} \sum_{j=1}^m z_j g_j(u(r))\Big] \|_{H^{m-2}(\R^d)}\nu(\d z)\d r\\
&\leq \mathbb{E} \int_0^T\int_B \|  \Phi(z, u(r))-u(r)+\mathrm{i} \sum_{j=1}^m z_j g_j(u(r)) \|_{H^{m-2}(\R^d)}\nu(\d z)\d r\\
&\leq \mathbb{E} \int_0^T\int_B \| \Phi(z, u(r))-u(r)+\mathrm{i} \sum_{j=1}^m z_j g_j(u(r))  \|_{H^{m}(\R^d)}\nu(\d z)\d r\\
&\leq  \mathbb{E} \int_0^T\int_B \|u(r)\|_{H^{m}(\R^d)}|z|^2_{\R^m}\nu(\d z)\d r<\infty.
\end{align*}
By interchange $A_{-1}$ with the integrals and using the deterministic and stochastic Fubini Theorems, we have
\begin{align*}
\int_0^t  A_{-1} u(s) \mathrm{d} s=&\int_0^t  A_{-1} S_s u_0 \mathrm{~d} s+\int_0^t   \int_r^t A_{-1} S_{s-r} \big(2u(r)L_{\varepsilon}(u(r))\big) \mathrm{~d} s \mathrm{d} r  \\
&+\int_0^t \int_B\Big( \int_r^t A_{-1}  S_{s-r} [\Phi(z, u(r-))-u(r-)] \d s\;\Big) \tilde{N}(\d r,\d z)\\
&+\int_0^t \int_B\Big( \int_r^t A_{-1} S_{s-r}\Big[\Phi(z, u(r))-u(r)+\mathrm{i} \sum_{j=1}^m z_j g_j(u(r))\Big] \d s\;\Big)\nu(\mathrm{d} z) \d r\\
=&T_{t}u_0-u_0+\int_0^t  T_{t-r}\big(2u(r)L_{\varepsilon}(u(r))\big)-2u(r)L_{\varepsilon}(u(r))\d r\\
&+\int_0^t\int_B (T_{t-r}-I) [\Phi(z, u(r-))-u(r-)] \tilde{N}(\d r,\d z)\\
&+ \int_0^t \int_B(T_{t-r}-I)\Big[\Phi(z, u(r))-u(r)+\mathrm{i} \sum_{j=1}^m z_j g_j(u(r))\Big] \nu(\mathrm{d} z)\d r
\end{align*}
$\mathbb{P}$-a.s. in $H^{m-4}(\R^d)$ for all $t\in[0,T]$. It follows from \eqref{mild-solution-eq} that
\begin{align*}
u(t)=&u_0+\int_0^t A_{-1}u(s)\d s+ \int_0^t2u(s)L_{\varepsilon}(u(s))\d s+ \int_0^t \int_B [\Phi(z, u(s-))-u(s-)]\tilde{N}(\d s,\d z)\\
&+\int_0^t \int_B\Big[\Phi(z, u(s))-u(s)+\mathrm{i} \sum_{j=1}^m z_j g_j(u(s))\Big] \nu(\mathrm{d} z)\d s,
\end{align*}
$\mathbb{P}$-a.s. in $H^{-3}(\R^d)$ for all $t\in[0,T]$. Again, thanks to the c\`adl\`ag continuity of the processes, the null set can be chosen independently of $t \in[0, T]$. By the given assumption on $u_0$, $u$ and boundedness of $L_{\varepsilon}$ and Lemma  \ref{lem-G-H1}, we get \eqref{strong-solution-eq} as an equation in $H^{m-2}(\R^d)$.
\end{proof}

\end{document}